\definecolor{persianred}{rgb}{0.8, 0.2, 0.2}
\definecolor{burgundy}{rgb}{0.5, 0.0, 0.13}
\definecolor{prussianblue}{rgb}{0.0, 0.19, 0.33}
\definecolor{aquamarine}{rgb}{0.5, 1.0, 0.83}
\definecolor{ao(english)}{rgb}{0.0, 0.5, 0.0}
\definecolor{blizzardblue}{rgb}{0.67, 0.9, 0.93}
\definecolor{blue(munsell)}{rgb}{0.0, 0.5, 0.69}
\DeclareMathOperator{\Spec}{Spec}
\DeclareMathOperator{\Ass}{Ass}
\DeclareMathOperator{\Hom}{Hom}
\DeclareMathOperator{\im}{im}
\DeclareMathOperator{\Spl}{Spl}
\DeclareMathOperator{\Nor}{Nor}
\DeclareMathOperator{\Frac}{Frac}
\DeclareMathOperator{\eval}{eval}
\DeclareMathOperator{\colim}{colim}
\DeclareMathOperator{\id}{id}
\DeclareMathOperator{\s}{sep}
\DeclareMathOperator{\Tr}{Tr}
\DeclareMathOperator{\nil}{nil}
\DeclareMathOperator{\cl}{cl}
\DeclareMathOperator{\Mod}{Mod}
\DeclareMathOperator{\fg}{fg}
\theoremstyle{plain}
\newtheorem{theorem}[subsubsection]{Theorem}
\newtheorem{proposition}[subsubsection]{Proposition}
\newtheorem{lemma}[subsubsection]{Lemma}
\newtheorem{corollary}[subsubsection]{Corollary}
\theoremstyle{definition}
\newtheorem{definition}[subsubsection]{Definition}
\newtheorem{example}[subsubsection]{Example}
\newtheorem{remark}[subsubsection]{Remark}
\theoremstyle{plain}
\theoremstyle{definition}
\theoremstyle{plain}
\theoremstyle{plain}
\numberwithin{equation}{subsubsection}
\numberwithin{equation}{subsubsection}
\def\@tocline#1#2#3#4#5#6#7{\relax
  \ifnum #1>\c@tocdepth 
  \else
    \par \addpenalty\@secpenalty\addvspace{#2}%
    \begingroup \hyphenpenalty\@M
    \@ifempty{#4}{%
      \@tempdima\csname r@tocindent\number#1\endcsname\relax
    }{%
      \@tempdima#4\relax
    }%
    \parindent\z@ \leftskip#3\relax \advance\leftskip\@tempdima\relax
    \rightskip\@pnumwidth plus4em \parfillskip-\@pnumwidth
    #5\leavevmode\hskip-\@tempdima
      \ifcase #1
       \or\or \hskip 1em \or \hskip 2em \else \hskip 3em \fi%
      #6\nobreak\relax
    \dotfill\hbox to\@pnumwidth{\@tocpagenum{#7}}\par
    \nobreak
    \endgroup
  \fi}
\title{Openness of splinter loci in prime characteristic}
\author{Rankeya Datta}
\address{Department of Mathematics, University of Missouri, Columbia, MO 65211, USA}
\email{rankeya.datta@missouri.edu}
\author{Kevin Tucker\vspace{-2ex}}
\address{Department of Mathematics, University of Illinois at Chicago, Chicago, IL 60607, USA}
\email{kftucker@uic.edu}
\thanks{The first author was supported in part by an AMS-Simons travel grant, and the second author in part by NSF Grants DMS \#1602070, \#1707661, and \#1246844.}
\begin{document}

\maketitle

\begin{abstract}
{}A splinter 
is a notion of singularity that has seen numerous recent applications, especially 
in connection with the direct summand theorem, the mixed 
characteristic minimal model program, Cohen--Macaulayness of absolute integral 
closures and cohomology vanishing theorems. 
Nevertheless, many basic questions about these singularities remain elusive.
One outstanding problem is whether
the splinter property spreads from a point to an open neighborhood of a noetherian
scheme.
Our paper addresses this problem in prime characteristic, where
we show that  a locally noetherian scheme 
that has finite Frobenius or that is locally essentially of finite type over a 
quasi-excellent local ring has an open splinter locus. In particular,
all varieties over fields of positive characteristic have open splinter loci.
Intimate connections are established
between the openness of splinter loci and $F$-compatible ideals,
which are prime characteristic analogues of log canonical centers. 
We prove the surprising fact that for a large class of noetherian
rings with pure (aka universally injective) Frobenius, 
the splinter condition is detected by the
splitting of a single generically \'etale finite extension. 
We also show that for a noetherian $\textbf{N}$-graded ring over
a field, the homogeneous maximal ideal detects the splinter property. 
\end{abstract}

\section{Introduction}

A noetherian ring is a splinter if it is a direct summand
of every finite cover. A splinter is a notion of singularity
since we now know that regular rings satisfy this
property by the celebrated direct summand theorem 
\cite{Hoc73, And18, Bha18}. For any notion of singularity, or more
generally, a property $\mathcal{P}$ of noetherian local rings, it is natural to ask if 
\[
\textrm{$\{x \in X \colon \mathcal{O}_{X,x}$ has $\mathcal{P}\}$}
\]
is an open subset of a locally noetherian scheme $X$. 
Such openness of loci questions were perhaps first
considered systematically by Grothendieck in \cite{EGAIV_2}. Many
fundamental 
local properties such as $R_n$, $S_n$, reduced, normal, Gorenstein, complete
intersection, Cohen--Macaulay, 
among others, are known to have open loci for \emph{most} locally noetherian
schemes that one encounters in arithmetic or geometry \cite{EGAIV_2, GM78, Val78}.

In this paper we consider the question of the openness of the splinter locus
of a locally noetherian scheme. As a preliminary observation, the splinter condition for
noetherian local $\mathbf{Q}$-algebras is equivalent to normality, and the
normal loci is open for locally noetherian schemes
that have open regular loci \cite[Cor.\ (6.13.5)]{EGAIV_2}. In particular,
the splinter locus of any quasi-excellent $\mathbf{Q}$-scheme is open because the 
normal locus of such a scheme is open. Our main 
result illustrates that a similar result holds 
for some large classes of 
locally noetherian schemes over $\mathbf{F}_p$.

\begin{theorem} {\normalfont (see Theorem \ref{thm:F-split-locus-open})}
\label{thm:main-thm}
Let $X$ be a scheme of prime characteristic $p > 0$ that satisfies any
of the following conditions:
\begin{enumerate}
    \item[(i)] $X$ is locally noetherian and $F$-finite.
    \item[(ii)] $X$ is locally essentially of finite type over a noetherian local ring
    $(A, \mathfrak m)$ of prime characteristic $p > 0$ with geometrically regular 
    formal fibers.
\end{enumerate}
Then $\textrm{$\{x \in X \colon \mathcal{O}_{X,x}$ is a splinter$\}$}$
is open in $X$.
\end{theorem}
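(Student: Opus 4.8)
The plan is to reduce the scheme-theoretic statement to a local, ring-theoretic one and then exploit the affine structure together with the two standing hypotheses. First I would reduce to the affine case: openness of a locus is local on $X$, so it suffices to treat $X = \Spec R$ for $R$ a noetherian ring of characteristic $p$ satisfying (i) $R$ is $F$-finite, or (ii) $R$ is essentially of finite type over a noetherian local ring $(A,\mathfrak m)$ with geometrically regular formal fibers (i.e.\ $A$ is quasi-excellent). The goal becomes: the splinter locus $\Spl(R) := \{\mathfrak p \in \Spec R \colon R_{\mathfrak p} \text{ is a splinter}\}$ is open. The natural strategy is to show this locus is stable under generization (which is essentially formal, since localizing a splinter at a smaller prime stays a splinter — a direct summand splits further) and then to produce, around any point $\mathfrak p \in \Spl(R)$, a single element $f \notin \mathfrak p$ with $\Spec R_f \subseteq \Spl(R)$. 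Equivalently, by a standard argument it is enough to show the non-splinter locus is closed under specialization and is \emph{constructible}; constructibility is where the finiteness hypotheses do the work.

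The key idea — anticipated by the abstract's emphasis on "splitting of a single generically étale finite extension" — is that the splinter property of $R_{\mathfrak p}$ should be detected by the splitting of finitely many explicit module maps, whose splitting is an open condition. Concretely: a splinter is a direct summand of \emph{every} finite extension, but one wants a bounded family. In the $F$-finite case one expects, after base change to the (module-finite) Frobenius or to a suitable finite generically étale extension, that splintering is governed by the surjectivity of $\Hom_R(R^{1/p^e}, R) \to R$ evaluated at $1$, or more precisely by the existence of a single splitting $R' \to R$ of one carefully chosen finite extension $R \hookrightarrow R'$; then $\{\mathfrak p : R'_{\mathfrak p} \text{ splits over } R_{\mathfrak p}\}$ is open because $\coker(\Hom_R(R',R) \to R)$ is a finitely generated module whose support is closed and avoids exactly those primes where a splitting exists. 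The hypotheses (i) and (ii) enter precisely to guarantee that such a finite extension $R'$ exists \emph{uniformly} over a neighborhood: $F$-finiteness makes $R^{1/p}$ module-finite over $R$ and, crucially, makes the relevant normalizations and $F$-compatible-ideal machinery (developed earlier in the paper, replacing log canonical centers) available; quasi-excellence of $A$ is what makes the normalization of $R$ in a finite field extension module-finite and keeps regular/normal loci open under the reductions performed, so that the arithmetic hypothesis (ii) can be bootstrapped — via completion and the geometrically-regular-formal-fibers assumption — to the $F$-finite situation through a faithfully flat local map to an $F$-finite (or excellent) ring.

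The hard part, and the main obstacle, is exactly this passage from "splinter = summand of every finite extension" to "splinter = summand of one explicit finite extension", done \emph{with uniform control in a neighborhood}: a priori the worst finite cover witnessing failure of the splinter property could depend badly on the point $\mathfrak p$. Overcoming this is where the paper's structural results on $F$-compatible ideals (the positive-characteristic analogue of log canonical centers) are essential — one uses them to pin down a finite list of "test" extensions, or a single generically étale one, whose splitting is necessary and sufficient, and whose formation commutes with the localizations and completions used in the reduction. A secondary technical point is checking that this detection is compatible with the faithfully flat descent along $R \to \widehat{R_{\mathfrak m}}$ (or to an $F$-finite flat extension), using geometric regularity of the formal fibers to ensure the splinter property ascends and descends appropriately; this is routine once the uniform-extension statement is in hand, but it is the glue that connects hypothesis (ii) to hypothesis (i). I expect the write-up to first dispatch case (i) via the $\Hom$-cokernel support argument, then reduce case (ii) to case (i) by a base-change-to-excellent/$F$-finite argument, invoking the earlier results of the paper in both steps.
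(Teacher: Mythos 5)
Your plan does track the paper's actual route in its main lines: the trace ideals $\uptau_{S/R}$ of finite $R$-subalgebras $S \subseteq R^+$ are uniformly $F$-compatible (Lemma \ref{lem:trace-ideals}), the finiteness of uniformly $F$-compatible ideals for Frobenius split rings (Proposition \ref{prop:finiteness-F-compatible}, i.e. \cite{Sch09}, \cite{HW15}) forces this cofiltered family to stabilize at the trace of a single finite, generically \'etale extension, the splitting locus of that one extension is open because it is the complement of $\mathbf{V}(\uptau_{B_0/R})$ and trace commutes with localization for finite extensions, and one still has to check that this stable trace localizes to the stable trace of $R_{\mathfrak p}$ (the spreading-out step in Proposition \ref{prop:splinter-ideal-properties}(8)), which you flag but defer. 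Likewise your reduction of case (ii) to the $F$-finite case by a faithfully flat cover with $F$-finite Frobenius split target is exactly the paper's use of the gamma construction together with Radu--Andr\'e (\cite{Rad92}, \cite{And93}, \cite{Mur19}) in the proof of Theorem \ref{thm:splinter-F-split}. Deferring the crux to the earlier $F$-compatible-ideal sections is legitimate, since that is where the paper gets it.

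There is, however, a concrete missing step, and it is the very first one in the paper's proof of Theorem \ref{thm:F-split-locus-open}: all of the machinery above applies only to charts that are $F$-pure (indeed the affine Theorem \ref{thm:splinter-F-split} hypothesizes an $F$-pure domain), because the finiteness of uniformly $F$-compatible ideals genuinely needs a Frobenius splitting or $F$-purity --- for instance, if $\Hom_R(F_*R,R)=0$ then \emph{every} ideal is uniformly $F$-compatible and the finiteness, radicality, and trace-stabilization all collapse. At a point $x$ with $\mathcal{O}_{X,x}$ a splinter you only know the \emph{local} ring is normal and $F$-pure; to run your argument on a neighborhood you must first know that $\Spl(X)$ is contained in the intersection of the normal locus and the $F$-pure locus and that \emph{both} of these loci are open, so that $X$ may be replaced by that open set and covered by charts which are $F$-pure normal domains. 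You invoke openness of the normal locus (quasi-excellence), but openness of the $F$-pure locus is nowhere in your plan: it is easy in case (i) (support of the cokernel of $\Hom_R(F_*R,R)\to R$), but in case (ii) it is a nontrivial input (\cite{Mur19}, attributed to Hoshi \cite{Has10b} in the excellent case, again via the gamma construction). Without this reduction, your step ``produce $f \notin \mathfrak p$ with $\Spec R_f \subseteq \Spl(R)$'' has no chart on which the $F$-compatible-ideal finiteness can legitimately be invoked, so the uniform single-extension detection you correctly identify as the heart of the matter cannot be established.
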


\noindent In particular, the splinter locus of any scheme of finite type over a 
field, or more generally, a complete local ring of positive prime characteristic is 
open.

Showing the openness of the splinter locus
has proved to be a challenging problem in prime and mixed characteristics 
because 
splinters are far more mysterious away from equal characteristic $0$. For example, 
in prime and mixed characteristics,
splinters surprisingly
coincide with a derived counterpart called a derived splinter \cite{Bha12, Bha20},
and excellent local splinters are Cohen--Macaulay 
and pseudo-rational \cite{Smi94, Smi97(a), Bha20} (in prime characteristic
they are also $F$-rational \cite{Smi94}). Furthermore, prime characteristic splinters are 
conjecturally equivalent to $F$-regular singularities \cite{Sin99(b), CEMS18}, which are 
analogues of Kawamata log terminal singularities that arise in the birational 
classification of algebraic varieties over the complex numbers \cite{Smi97(a),
MS97, Har98}. This last conjectural equivalence implies 
one of the outstanding problems in tight closure theory,
namely that weak $F$-regularity is preserved under localization.
The notion of a splinter globalizes, and in a non-affine setting in mixed and 
positive characteristics they
have been recently called \emph{globally $+$-regular schemes}  \cite{BMPSTWW20}. 
This is in part because of their similarities
with globally $F$-regular varieties in positive characteristic \cite{Smi00, SS10}.

By working on an affine cover, the openness of loci for a local property reduces 
to a question about affine schemes, and 
Theorem \ref{thm:main-thm} follows from the following, more refined,
affine 
result.

\begin{theorem}{\normalfont (see Theorem \ref{thm:splinter-F-split})}
\label{thm:affine-main-thm}
Let $R$ be a noetherian $F$-pure domain of prime characteristic
$p > 0$ and assume that $R$ satisfies any of the following 
conditions:
\begin{enumerate}
    \item[(i)] $R$ is $F$-finite.
    \item[(ii)] $R$ is local and Frobenius split.
    \item[(iii)] $(A, \mathfrak m)$ is a noetherian local ring of 
    prime characteristic $p > 0$ with geometrically regular formal
    fibers and $R$ is essentially of finite type over $A$.
\end{enumerate}
Let $\mathcal{C}$ be the collection of finite $R$-subalgebras
of $R^+$ and for an $R$-algebra $S$, let
\[
\uptau_{S/R} \coloneqq \im(\Hom_R(S,R) \xrightarrow{\eval \MVAt 1}
R).
\]
Then we have the following:
\begin{enumerate}
    \item $\{\uptau_{S/R} \colon S \in \mathcal{C}\}$ is a finite
    set of radical ideals of $R$.
    \item The splinter locus of $\Spec(R)$ is open and its 
    complement is $\mathbf{V}(\uptau_R)$, where
    $$\uptau_R \coloneqq \bigcap_{S \in \mathcal C} \uptau_{S/R}.$$
    \item There exists $S \in \mathcal{C}$ such that 
    $R \subseteq S$ is generically \'etale and if 
    $R \hookrightarrow S$ splits, then $R$ is a splinter.
\end{enumerate}
\end{theorem}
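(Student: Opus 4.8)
The plan is to take the $F$-finite case (i) as the technical core and to reduce cases (ii) and (iii) to it. Throughout I would use the following elementary facts: for a module-finite $R$-subalgebra $S$ of $R^{+}$ one has $\uptau_{S/R} = R$ if and only if $R \hookrightarrow S$ is split; the assignment $S \mapsto \uptau_{S/R}$ is inclusion-reversing on $\mathcal{C}$ (one restricts splittings), and $\mathcal{C}$ is directed by composita; and the formation of $\uptau_{S/R}$ commutes with localization because $S$ is module-finite over $R$. Combining the last fact with the standard reduction of the splinter property to module-finite extension domains, $R_{\mathfrak{p}}$ is a splinter exactly when $R_{\mathfrak{p}} \hookrightarrow S_{\mathfrak{p}}$ splits for every $S \in \mathcal{C}$, so the non-splinter locus of $\Spec R$ is $\bigcup_{S \in \mathcal{C}} \mathbf{V}(\uptau_{S/R})$. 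Hence (1) and (2) both follow once one knows that $\{\uptau_{S/R} : S \in \mathcal{C}\}$ is a finite set of radical ideals: finiteness collapses this union to $\mathbf{V}\bigl(\bigcap_{S}\uptau_{S/R}\bigr) = \mathbf{V}(\uptau_R)$, which is closed.

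The heart of (1) is the claim that each $\uptau_{S/R}$ is a uniformly $F$-compatible ideal of $R$. Given $e \geq 1$, a map $\phi \in \Hom_R(F^{e}_{*}R, R)$ and a map $\psi \in \Hom_R(S,R)$, I would identify $\phi$ with an $R$-linear map $R^{1/p^{e}} \to R$ (via $F^{e}_{*}R \cong R^{1/p^{e}}$) and, using that $S$ is a domain so the $p^{e}$-th root map is a ring isomorphism $S \xrightarrow{\sim} S^{1/p^{e}} \subseteq R^{+}$, form the $R$-linear composite $S^{1/p^{e}} \to R^{1/p^{e}} \to R$ sending $1$ to $\phi(F^{e}_{*}\psi(1))$. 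In the $F$-finite case $S^{1/p^{e}}$ is module-finite over $R$, hence lies in $\mathcal{C}$, and restriction along $S \subseteq S^{1/p^{e}}$ gives $\phi(F^{e}_{*}\psi(1)) \in \uptau_{S^{1/p^{e}}/R} \subseteq \uptau_{S/R}$; as every element of $\uptau_{S/R}$ has the form $\psi(1)$, this yields uniform $F$-compatibility. Radicality is then the familiar argument (an $F$-finite $F$-pure ring is Frobenius split; apply an iterate of a splitting to $F^{e}_{*}(x^{p^{e}})$ for $x \in \sqrt{\uptau_{S/R}}$ and $p^{e}$ large), and finiteness is the known finiteness of uniformly $F$-compatible ideals in an $F$-finite Frobenius split ring. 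This gives (1), and (2) follows as above.

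For (3) the aim is to produce a \emph{generically \'etale} $S_0 \in \mathcal{C}$ with $\uptau_{S_0/R} = \uptau_R$, since then $R \hookrightarrow S_0$ splits iff $\uptau_R = R$ iff $R$ is a splinter. Writing $R^{\s+}$ for the integral closure of $R$ in the separable closure $\Frac(R)^{\s}$, I would record the identity $R^{+} = \bigcup_{f \geq 0}(R^{\s+})^{1/p^{f}}$ (any $\alpha \in R^{+}$ has some power $\alpha^{p^{f}}$ separable over $\Frac(R)$, so $\alpha^{p^{f}} \in R^{\s+}$). Together with the $p$-th root trick for splittings --- if $\sigma \colon S \to R$ and $\pi_f \colon R^{1/p^{f}} \to R$ split, then $\pi_f \circ \sigma^{1/p^{f}}$ splits $R \hookrightarrow S^{1/p^{f}}$ --- and the fact that each module-finite $T \subseteq R^{+}$ lies in $S^{1/p^{f}}$ for some generically \'etale $S \in \mathcal{C}$, this shows that a Frobenius split local ring $R$ is a splinter iff $R \hookrightarrow S$ splits for every generically \'etale $S \in \mathcal{C}$. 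The generically \'etale members of $\mathcal{C}$ are again directed by composita and realize only finitely many trace ideals, so $J := \uptau_{S_0/R} = \bigcap\{\uptau_{S/R} : S \in \mathcal{C} \text{ generically \'etale}\}$ for a suitable compositum $S_0$, and $J$ is radical by (1). Applying the previous sentence to each localization $R_{\mathfrak{p}}$ (using cofinality of the extensions $S_{\mathfrak{p}}$ among generically \'etale module-finite extensions of $R_{\mathfrak{p}}$), $\mathbf{V}(J)$ is the non-splinter locus, which by (2) equals $\mathbf{V}(\uptau_R)$; as $J$ and $\uptau_R$ are radical, $J = \uptau_R$, giving (3).

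The step I expect to be the main obstacle is the passage from the $F$-finite case (i) to cases (ii) and (iii). This should go through a faithfully flat base change --- a $\Gamma$-construction applied to the completion of $R$ in case (ii), and a reduction through $\hat{A}$ exploiting the geometrically regular formal fibers of $A$ in case (iii) --- under which $F$-purity, the ideals $\uptau_{S/R}$, and the splinter locus are all simultaneously controlled; this is where most of the technical bookkeeping lies. A secondary difficulty is the final step of (3): showing that restricting attention to generically \'etale extensions does not change $\bigcap_{S \in \mathcal{C}}\uptau_{S/R}$ as an \emph{ideal} and not merely as a closed set, for which the finiteness and radicality from (1) together with localization-compatibility must be combined. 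I would also want to verify the cofinality assertions used above --- that $S^{1/p^{e}}$, $S^{1/p^{f}}$, and $S_{\mathfrak{p}}$ fall into the relevant families --- which are routine but load-bearing.
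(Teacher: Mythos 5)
Your plan for (1) and (2) is essentially the paper's own argument: traces of finite extensions are uniformly $F$-compatible (your $p^e$-th root formulation is the same computation as Lemma \ref{lem:trace-ideals}(2), and in fact it needs neither $F$-finiteness nor $S$ a domain), radicality comes from compatibility with a Frobenius splitting, finiteness is Schwede's finiteness of compatible ideals (Proposition \ref{prop:finiteness-F-compatible}), and cases (ii)--(iii) are reduced to the $F$-finite Frobenius split situation by a faithfully flat cover built from completion, geometrically regular formal fibers (Radu--Andr\'e) and the $\Gamma$-construction, with traces expanding to traces and contracting back. The localization/spreading-out identification of the non-splinter locus with $\bigcup_{S}\mathbf{V}(\uptau_{S/R})$ is also how the paper argues (Proposition \ref{prop:splinter-ideal-properties}(8)--(9)), and your observation that finiteness alone collapses this union to $\mathbf{V}(\uptau_R)$ is correct. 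These parts are fine, granting the acknowledged bookkeeping.

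The genuine gap is in part (3), case (iii). Your detection statement --- a ring all of whose generically \'etale finite extensions split is a splinter --- is proved by the $p^f$-th root trick, which requires an honest splitting $\pi_f\colon R^{1/p^f}\to R$, and you need it at $R_{\mathfrak p}$ for \emph{every} prime $\mathfrak p$ in order to conclude that $\mathbf{V}(J)$ is the non-splinter locus. In cases (i) and (ii) such splittings exist ($F$-finite plus $F$-pure gives Frobenius split, and Frobenius splittings localize), but in case (iii) the ring is only assumed $F$-pure, and the paper emphasizes (via the examples of \cite{DM20}) that such rings --- even excellent regular local ones --- may admit no nonzero $R$-linear map $F_*R\to R$ at all, so no $\pi_f$ exists and the trick does not run. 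Your proposed remedy, faithfully flat base change to an $F$-finite Frobenius split cover $R'$, handles (1) but does not obviously repair (3): a splitting of $R_{\mathfrak p}\to (S_0)_{\mathfrak p}$ base-changes to a splitting of the corresponding extension of $R'$, but to deduce anything you would need its trace to compute $\uptau_{R'}$, i.e.\ something like $\uptau_R R'=\uptau_{R'}$, which is not available at this stage (in the paper it is a \emph{consequence} of the theorem, Corollary \ref{cor:trace-commutes-completion-henselization}). The paper instead obtains the generically \'etale detection in all three cases from Singh's separable plus closure theorem, $IR^+\cap R=I(R^+)^{\s}\cap R$ for \emph{N-1} domains (Proposition \ref{prop:singh-separable}, Corollary \ref{cor:Singh-splinter-sep}), with the \emph{N-1} hypothesis supplied by quasi-excellence in case (iii) and the approximately Gorenstein property supplied by $F$-purity. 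So either import that input, or confine your argument for (3) to cases (i) and (ii). For those two cases your $p$-th root argument is correct and is a pleasantly elementary substitute for Singh's theorem; the rest of your derivation of (3) (directedness of the generically \'etale subfamily, radicality, and comparing $\mathbf{V}(J)$ with $\mathbf{V}(\uptau_R)$) is sound.
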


\noindent Here $R^+$ deonotes the absolute integral closure of 
$R$, that is, $R^+$ is the integral closure of $R$ in an algebraic
closure of its fraction field. 
$F$-purity is the universal injectivity of the Frobenius map on $R$.
It is a mild assumption when discussing questions
pertaining to splinters in prime characteristic because splinters
are automatically $F$-pure. A surprising aspect
of Theorem \ref{thm:affine-main-thm} is perhaps the fact that
for most
$F$-pure noetherian domains that arise in arithmetic and geometry,
the splinter property 
is determined by the splitting of a \emph{single} generically \'etale finite extension domain, even though the definition
of a splinter a priori requires the splitting of all finite
extension domains. Theorem \ref{thm:main-thm} is a formal consequence of Theorem \ref{thm:affine-main-thm} because the splinter locus is contained in the $F$-pure
locus of any locally noetherian $\mathbf{F}_p$-scheme, and the $F$-pure locus is known to be open
when $X$ satisfies the assumptions of Theorem \ref{thm:main-thm}.

The proof of Theorem \ref{thm:affine-main-thm}
is not particularly involved in the noetherian $F$-finite setting, 
so we briefly discuss our strategy for the expert.
The ideal $\uptau_{S/R}$ of Theorem 
\ref{thm:affine-main-thm} is called the \emph{trace of $S$ over $R$}, and the
content of the
Theorem is that these traces 
stabilize as $S$ ranges over the finite $R$-subalgebras of $R^+$ with 
appropriate purity assumptions on $R$. The stabilization of 
traces follows from the fact that trace ideals satisfy the
property of being \emph{uniformly $F$-compatible}, which is a prime characteristic analogue of the notion
of a center of log canonicity \cite{Sch09, Sch10, ST10}. 
For us, the key fact about uniformly $F$-compatible ideals 
is their finiteness under appropriate
assumptions. Namely,
Schwede showed that an $F$-pure noetherian $F$-finite ring
has only finitely many uniformly $F$-compatible ideals \cite{Sch09}. 
In fact, an explicit bound on the Hilbert--Samuel multiplicity of an
$F$-pure noetherian local ring \cite{HW15} allows one to obtain explicit bounds
for the number of uniformly $F$-compatible ideals of a given coheight in
the local setting 
(see also \cite{ST10} and Proposition \ref{prop:finiteness-F-compatible}). 
Thus, the finiteness of uniformly $F$-compatible ideals for a noetherian 
$F$-finite Frobenius split domain $R$ readily implies the finiteness of the set
of trace ideals of finite extensions $R$. One then shows that the 
stable trace ideal has to define the non-splinter locus. 

The drawback of the above approach is that there is typically no 
control over when the trace ideals of finite extensions
of a noetherian Frobenius split ring $R$ stabilize. Thus, 
it feels hopeless to obtain a more explicit description of 
the ideal
$\uptau_R$ that defines the closed non-splinter locus of $R$
via the approach of uniformly $F$-compatible ideals.
We devote a significant portion of our paper 
to obtaining a better understanding
of $\uptau_R$.
Our strategy involves looking at the plus
closure operation. Just as tight closure detects weak $F$-regularity, \emph{plus closure} detects
the splinter property in the sense that a noetherian domain is a splinter
precisely when all ideals of the domain are plus closed. 
By analyzing closure operations associated with $R$-algebras \cite{PRG, Hoc94}, 
we show that the ideal $\uptau_R$ that defines the non-splinter
locus of $R$ in Theorem \ref{thm:affine-main-thm} is the 
\emph{big test ideal} of plus closure. Said differently, $\uptau_R$ 
coincides with the 
ideal that one morally expects to
define the non-splinter locus of a noetherian domain.

\begin{proposition}{\normalfont(see Propositions \ref{prop:algebra-closure}, 
\ref{prop:splinter-ideal-properties} and Corollary \ref{cor:ideal-traces-approx-gor})}
\label{prop:big-test-ideal}
Let $R$ be an approximately Gorenstein noetherian domain of 
arbitrary characteristic (i.e. without any restrictions on 
characteristic) and let $\mathcal{C}$ be the collection of
finite $R$-subalgebras of $R^+$. Then we have the following:
\begin{enumerate}
    \item The ideal 
    $\uptau_R \coloneqq \bigcap_{S \in \mathcal{C}} \uptau_{S/R}$ 
    equals the big plus closure test ideal $\bigcap_{I} (I:IR^+ \cap R)$. Here the latter intersection ranges over all ideals
    of $R$.
    
    \item If $R$ is complete local and $B$ is an $R$-algebra, 
    then $\uptau_{B/R} = \bigcap_I (I:IB \cap R)$.
\end{enumerate}
\end{proposition}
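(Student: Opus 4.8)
The plan is to reduce everything to a clean statement about the closure operation attached to an $R$-algebra, and then to recognize the trace ideal as the associated (big) test ideal. For an $R$-algebra $B$, one associates to each ideal $I$ of $R$ its \emph{$B$-closure} $I^B \coloneqq IB \cap R$ (or rather its saturation inside $R$); the key structural input I would isolate first is a duality-type identity saying that, when $R$ is approximately Gorenstein, the ideal $\bigcap_I (I : I^B)$ — the ``big test ideal of the closure operation $\cl_B$'' — coincides with $\uptau_{B/R} = \im(\Hom_R(B,R)\xrightarrow{\mathrm{ev}_1} R)$. This is exactly the content of the cited Proposition \ref{prop:algebra-closure}, which I would invoke: the approximately Gorenstein hypothesis lets one express test-ideal-type intersections via a limit of colengths / via the parameter-like ideals witnessing approximate Gorensteinness, and the Matlis-dual incarnation of $\mathrm{ev}_1\colon \Hom_R(B,R)\to R$ is precisely the annihilator of the obstruction to $I = I^B$ across all $I$. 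Part (2) is then almost immediate in the complete local case, since there $R$ is automatically approximately Gorenstein and Matlis duality applies on the nose, so $\uptau_{B/R} = \bigcap_I(I : IB\cap R)$ with no further work beyond quoting Proposition \ref{prop:algebra-closure}.

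For part (1) the remaining task is to pass from a single algebra $B$ to the directed system $\mathcal{C}$ of finite $R$-subalgebras of $R^+$, i.e.\ to show
\[
\bigcap_{S \in \mathcal C} \uptau_{S/R} \;=\; \bigcap_I \bigl(I : IR^+ \cap R\bigr).
\]
Here I would argue in two directions. For ``$\supseteq$'': since $R^+ = \colim_{S\in\mathcal C} S$ is a filtered colimit and each $I$ is finitely generated, $IR^+ \cap R = \bigcup_{S} (IS \cap R)$, so $I : IR^+\cap R = \bigcap_S (I : IS\cap R)$; intersecting over all $I$ and swapping the order of intersection gives $\bigcap_I (I:IR^+\cap R) = \bigcap_S \bigl(\bigcap_I (I:IS\cap R)\bigr) = \bigcap_S \uptau_{S/R}$ by part of the single-algebra identity — \emph{provided} that single-algebra identity $\uptau_{S/R} = \bigcap_I (I : IS\cap R)$ holds for each finite $S \in \mathcal C$ and not only for complete local $R$. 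So the real point is to upgrade Proposition \ref{prop:algebra-closure} from ``complete local'' to ``approximately Gorenstein'' when the algebra is \emph{module-finite}: for module-finite $S$, both $\Hom_R(S,R)$ and the ideals $IS\cap R$ commute with the completions $R\to\widehat{R_{\mathfrak m}}$ at maximal ideals (finite generation), and one checks $\uptau_{S/R}$ and $\bigcap_I(I:IS\cap R)$ locally, reducing to the complete local case already handled. This is the step I expect to be the main obstacle: making the localization/completion descent genuinely work requires knowing that the ideals witnessing approximate Gorensteinness behave well under localization and that the intersection $\bigcap_I(I:IS\cap R)$ is detected locally at maximal ideals — plausibly this is where the ``approximately Gorenstein'' hypothesis is doing its real work, and where Corollary \ref{cor:ideal-traces-approx-gor} (the cited companion result) is used.

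With that single-algebra identity in hand, the directed-colimit bookkeeping for ``$\subseteq$'' is routine: given $x \in \bigcap_S \uptau_{S/R}$ and an ideal $I$, one must show $x(IR^+\cap R)\subseteq I$; picking $a \in IR^+\cap R$, it already lies in $IS\cap R$ for some finite $S\in\mathcal C$ (again filteredness plus finite generation of $I$), and then $x\in\uptau_{S/R} = \bigcap_J(J:JS\cap R)\subseteq (I:IS\cap R)$ gives $xa\in I$. Finally I would record the identification of $\bigcap_I(I:IR^+\cap R)$ with the ``big plus closure test ideal'' as simply unwinding definitions: $IR^+\cap R$ is by definition the plus closure $I^+$ of $I$, so $\bigcap_I(I:I^+)$ is the big test ideal of plus closure, and the chain of equalities above is exactly the assertion $\uptau_R = $ (big plus closure test ideal). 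The only genuine mathematics is the approximately-Gorenstein single-algebra duality of Proposition \ref{prop:algebra-closure} together with its module-finite descent; everything else is colimit formalism.
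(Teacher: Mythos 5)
Your overall architecture is the same as the paper's: prove a single‑algebra identity $\uptau_{S/R}=\bigcap_I(I:IS\cap R)$ for module‑finite $S$ by reducing to the (complete) local case, and then do the filtered‑colimit bookkeeping $IR^+\cap R=\bigcup_S(IS\cap R)$ to identify $\bigcap_S\uptau_{S/R}$ with $\bigcap_I(I:IR^+\cap R)$; that colimit step is exactly Proposition \ref{prop:splinter-ideal-properties}(2)--(3), and your localization squeeze $(\uptau_{S/R})_{\mathfrak m}\subseteq (T_{S/R})_{\mathfrak m}\subseteq T_{S_{\mathfrak m}/R_{\mathfrak m}}=\uptau_{S_{\mathfrak m}/R_{\mathfrak m}}=(\uptau_{S/R})_{\mathfrak m}$ is exactly how Corollary \ref{cor:ideal-traces-approx-gor}(1) is proved. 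The one structural difference is that the paper does not complete: it reduces only to the local case and quotes \cite[Thm.\ 3.12]{PRG}, which already covers local rings and module‑finite algebras. Your completion route also works, but it needs one extra check you only gesture at: to get $T_{S_{\mathfrak m}/R_{\mathfrak m}}\subseteq T_{\widehat S/\widehat R}$ one must know that $T_{\widehat S/\widehat R}$ is computed by ideals primary to the maximal ideal (Proposition \ref{prop:algebra-closure}(5)), since those are the ones extended from $R_{\mathfrak m}$; general ideals of $\widehat R$ are not extended.

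Two points need correction. First, your opening claim --- that for approximately Gorenstein $R$ and an \emph{arbitrary} $R$-algebra $B$ one has $\uptau_{B/R}=\bigcap_I(I:IB\cap R)$, and that this is ``exactly the content of Proposition \ref{prop:algebra-closure}'' --- is false in that generality and is not what that proposition says: the paper's own example (an excellent Henselian regular local $A$ with no nonzero maps $F_*A\to A$, so $\uptau_{F_*A/A}=0$ while $T_{F_*A/A}=A$) shows approximate Gorensteinness alone does not suffice; Proposition \ref{prop:algebra-closure} gives only the containment $\uptau_{B/A}\subseteq\uptau_{\cl_B}(A)$ together with $\uptau_{\cl_B}(A)=\uptau^{\fg}_{\cl_B}(A)=T_{B/A}$. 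You do ultimately use the identity only for $B$ finite or $R$ complete, where it is true, but the framing should be fixed. Second, part (2) is not ``no further work'': the substantive containment is $\bigcap_I(I:IB\cap R)\subseteq\uptau_{B/R}$ over a complete local ring, which in the paper is the combination of \cite[Thm.\ 3.12]{PRG} (the Matlis‑duality/injectivity‑of‑$E$ argument you allude to, identifying $\uptau_{B/R}$ with $\uptau_{\cl_B}(R)$) and the approximately Gorenstein identification $\uptau_{\cl_B}(R)=T_{B/R}$, whose proof (Proposition \ref{prop:algebra-closure}(4) via Lemma \ref{lem:approx-Gorenstein}) embeds essential extensions into Gorenstein artinian quotients in the style of \cite[Prop.\ (8.15)]{HH90}. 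Your slogan ``Matlis duality on the nose'' names the right mechanism, but it has to be either carried out or quoted from P\'erez--R.G.; it is the heart of the statement rather than a formality.
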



\noindent The class of approximately Gorenstein rings is fairly broad
and includes 
noetherian normal rings and reduced locally excellent rings.
Taking $B = R^+$ in Proposition
\ref{prop:big-test-ideal}(2), we see that when 
$R$ is a complete local domain, the image of the
map $\Hom_R(R^+, R) \xrightarrow{\eval \MVAt 1} R$ equals
$\bigcap_I(I:IR^+ \cap R)$, which in turn equals
$\uptau_R$. This observation 
recovers a result of Hochster
and Zhang that to the best of our knowledge has not appeared in
print. 
 We refer the reader to Subsection 
\ref{subsec:Algebra closures, traces and ideal traces} 
for further details on test ideals of algebra closures,
where among other things, we partially 
answer a question raised
by P{\'e}rez and R.G. \cite{PRG} 
in the affirmative about the
equality of big and finitistic test ideals of closure
operations associated with algebras and certain modules.

Proposition \ref{prop:big-test-ideal}(1) and
ideal theoretic results from our work on permanence 
properties of splinters \cite{DT19} allow us to
obtain some transformation rules for the splinter
ideal $\uptau_R$ under
Henselizations and completions.

\begin{proposition}{\normalfont (see Proposition \ref{prop:trace-henselization-completion} and
Corollary \ref{cor:trace-commutes-completion-henselization})}
\label{prop:transformation}
Let $(R, \mathfrak m)$ be a noetherian normal domain of
arbitrary characteristic with geometrically regular 
formal fibers. Then we have the following:
\begin{enumerate}
    \item If $R^h$ is the Henselization of $R$ with
    respect to $\mathfrak{m}$, then 
    $\uptau_{R^h} \cap R =  \uptau_R$. 
    
    \item If $\widehat{R}$ is the $\mathfrak m$-adic
    completion of $R$, then $\uptau_{\widehat{R}} \cap R
    = \uptau_R$. 
\end{enumerate}
If $R$ is additionally $F$-pure, then in (1) we have $\uptau_RR^h = \uptau_{R^h}$ and in (2) we have
$\uptau_R\widehat{R} = \uptau_{\widehat R}$.
\end{proposition}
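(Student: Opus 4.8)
The plan is to deduce the statement from Proposition \ref{prop:big-test-ideal}(1), which identifies $\uptau_R$ with the big plus closure test ideal $\bigcap_I (I : IR^+ \cap R)$, together with the ideal-theoretic behavior of plus closure under Henselization and completion established in our prior work \cite{DT19}. The crucial input from \cite{DT19} should be that for a noetherian normal domain $R$ with geometrically regular formal fibers, the plus closure of an ideal $I$ commutes with the faithfully flat base changes $R \to R^h$ and $R \to \widehat{R}$; concretely, $(IR^h)R^+ \cap R^h = (I R^+ \cap R) R^h$, and similarly for $\widehat R$, where we use that $(R^h)^+$ and $\widehat{R}^{\,+}$ can be related to $R^+$ via the normality and excellence of the formal fibers (so that $R^+ \otimes_R R^h$ maps to $(R^h)^+$ with controlled kernel/image). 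Given such a comparison, intersecting over all ideals $I$ and using that every ideal of $R^h$ (resp. $\widehat R$) is extended up to the relevant closure operation — or more carefully, that the contraction of the test ideal is computed by testing on extended ideals — will yield $\uptau_{R^h} \cap R = \uptau_R$ and $\uptau_{\widehat R} \cap R = \uptau_R$.

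First I would reduce to the completion case, since $R \to R^h \to \widehat R$ and $\widehat{R^h} = \widehat R$, so that $\uptau_{\widehat R} \cap R^h = \uptau_{R^h}$ (applying the completion statement to $R^h$, which is again normal with geometrically regular formal fibers) combined with $\uptau_{\widehat R} \cap R = \uptau_R$ gives (1) from (2). Then for (2): by faithful flatness of $R \to \widehat R$ and Proposition \ref{prop:big-test-ideal}(1) applied to both rings, I would show the inclusion $\uptau_{\widehat R} \cap R \subseteq \uptau_R$ by taking an ideal $I \subseteq R$, writing $\uptau_{\widehat R} \subseteq (I\widehat R : I\widehat R^{\,+} \cap \widehat R)$, contracting to $R$, and invoking the base-change formula for plus closure from \cite{DT19} to identify $(I\widehat R^{\,+} \cap \widehat R) \cap R$ (or rather the colon ideal) with $IR^+ \cap R$ up to what is needed. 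The reverse inclusion $\uptau_R \widehat R \subseteq \uptau_{\widehat R}$ — equivalently $\uptau_R \subseteq \uptau_{\widehat R} \cap R$ after checking the extension is an equality in the $F$-pure case — would follow by a symmetric argument: an element multiplying $IR^+ \cap R$ into $I$ for all $I \subseteq R$ continues, after extension, to multiply $(I\widehat R)\widehat R^{\,+} \cap \widehat R$ into $I\widehat R$ for all $I$, using that ideals of $\widehat R$ are limits of extended ideals and that plus closure is compatible with this.

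For the final two sentences of the statement, assuming $R$ is additionally $F$-pure: here I would use that $\uptau_R$ is the big test ideal of plus closure and that $F$-purity makes $\uptau_R$ behave like a genuine (non-finitistic) test ideal, in particular that it is compatible with the completion map as an \emph{equality of extended ideals} rather than merely a contraction. The cleanest route is probably to invoke the localization/completion behavior of test ideals of closure operations discussed in Subsection \ref{subsec:Algebra closures, traces and ideal traces}: since $R$ is $F$-pure and normal with geometrically regular formal fibers, $\widehat R$ is $F$-pure (and the $F$-pure locus behaves well), so $\uptau_{\widehat R}$ is nonzero and defined by the same intersection; combining $\uptau_{\widehat R} \cap R = \uptau_R$ with $\uptau_R \widehat R \subseteq \uptau_{\widehat R}$ and faithful flatness, equality $\uptau_R \widehat R = \uptau_{\widehat R}$ follows once one knows $\uptau_{\widehat R}$ is extended from $R$, which is where $F$-purity (via compatibility of the splitting with the intersections defining the test ideal) is used.

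The hard part will be the precise comparison of $R^+$ with $(R^h)^+$ and $\widehat R^{\,+}$ and the resulting base-change formula for plus closure under these completions; this is exactly the ideal-theoretic content imported from \cite{DT19}, and getting the statement into the form ``plus closure of extended ideal contracts to (extension of) plus closure'' with no loss — especially tracking what happens with the colon ideals and the passage to the infinite intersection over all $I$ — is the delicate step. The $F$-pure upgrade to an equality of extended ideals is the second, milder, obstacle, resting on the fact that a Frobenius splitting of $R$ extends to one of $\widehat R$ compatibly with all the ideals in sight.
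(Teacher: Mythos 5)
Your skeleton for the contraction statements (1) and (2) matches the paper's: identify $\uptau_R$ with the ideal trace $T_{R^+/R}=\bigcap_I(I:IR^+\cap R)$, get the inclusion $\uptau_{R^h}\cap R\subseteq\uptau_R$ (resp.\ for $\widehat R$) from cyclic purity of the faithfully flat map (this is Lemma \ref{lem:descent-plus-closure-test-elements}), and get the reverse inclusion by comparing plus closures over $R$ and over $R^h$, $\widehat R$. But the ``crucial input'' you import from \cite{DT19} is stronger than what that paper provides: \cite{DT19} gives only the \emph{contraction} statements $I(R^h)^+\cap R=IR^+\cap R$ and $I\widehat R^{\,+}\cap R=IR^+\cap R$, not the base-change formula $(IR^h)(R^h)^+\cap R^h=(IR^+\cap R)R^h$ for all ideals $I$, and the latter is not established anywhere (nor needed). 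The missing idea that makes the hard direction work is the reduction, via Proposition \ref{prop:algebra-closure}(5), of membership in $T_{(R^h)^+/R^h}$ to ideals primary to the maximal ideal: every $\mathfrak m R^h$-primary (resp.\ $\mathfrak m\widehat R$-primary) ideal $J$ \emph{is} extended, $J=IR^h$, and its plus closure $J(R^h)^+\cap R^h$ is again $\mathfrak m R^h$-primary, hence extended, so the contraction result forces $J(R^h)^+\cap R^h=(IR^+\cap R)R^h$, and $c\in T_{R^+/R}$ then gives $c\cdot(J(R^h)^+\cap R^h)\subseteq J$. Your phrase ``testing on extended ideals'' gestures at this, but without the primary-ideal reduction it is not justified that extended ideals suffice, and without extendedness of the plus closure the colon computation does not go through; as written this step is a genuine gap.

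The $F$-pure upgrade is where your proposal goes off the rails. You propose to extend a Frobenius splitting of $R$ to $\widehat R$ ``compatibly with all the ideals in sight,'' but an $F$-pure noetherian local ring need not be Frobenius split --- the paper stresses (citing \cite{DM20}) that there are excellent regular local rings admitting no nonzero map $F_*R\to R$ --- so this mechanism is unavailable in the stated generality; moreover ``equality follows once one knows $\uptau_{\widehat R}$ is extended from $R$'' is circular, since extendedness is exactly the assertion to be proved. The paper's actual argument is different: by Theorem \ref{thm:splinter-F-split}, under $F$-purity both $\uptau_R$ and $\uptau_{R^h}$ (resp.\ $\uptau_{\widehat R}$) are \emph{radical} ideals cutting out the non-splinter loci; since $R\to R^h$ (resp.\ $R\to\widehat R$) is a regular homomorphism, $R^h/\uptau_RR^h$ is reduced, so $\uptau_RR^h$ is radical; the containment $\uptau_RR^h\subseteq\uptau_{R^h}$ from part (1) gives one inclusion of vanishing loci, and faithfully flat descent of the splinter property (if $\uptau_R\subseteq\mathfrak q\cap R$ then $R_{\mathfrak q\cap R}$, hence $(R^h)_{\mathfrak q}$, is not a splinter) gives the other; two radical ideals with the same vanishing locus are equal. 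If you want to salvage your approach, you need this detour through openness of the splinter locus and radicality, not a splitting of $R$.
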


\noindent In fact, in part (1) of Proposition \ref{prop:transformation} one does
not need any assumptions on the formal fibers of $R$. One should compare
Proposition \ref{prop:trace-henselization-completion} with the 
transformation rules for the big tight closure test ideal under 
Henselizations and completions. We expect the equalities 
$\uptau_RR^h = \uptau_{R^h}$ and $\uptau_R\widehat{R} = \uptau_{\widehat R}$
to hold without restrictions on the characteristic or singularities of
normal noetherian local domains, although we are unable to show this
at present.

As an application of the openness of the splinter locus and the ascent
of the splinter property under \'etale maps \cite[Thm.\ A]{DT19}, we show
that the splinter condition for noetherian $\mathbf{N}$-graded rings over
fields is detected by the homogenous maximal ideal. This is an analogue
of Smith and Lyubeznik's result that weak $F$-regularity is detected
by the homogeneous maximal ideal \cite[Cor.\ 4.6]{SL99}.

\begin{corollary}{\normalfont (See Corollary \ref{cor:splinter-graded})}
Let $R = \bigoplus_{n = 0}^{\infty} R_n$ be a noetherian graded ring such
that $R_0 = k$ is a field.
Let $\mathfrak{m} \coloneqq \bigoplus_{n > 0} R_n$ be the homogeneous maximal
ideal of $R$. Then $R$ is a splinter if and only if $R_{\mathfrak m}$ is a
splinter.
\end{corollary}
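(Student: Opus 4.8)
The plan is to use the openness of the splinter locus of $\Spec(R)$, which we may invoke by Theorem~\ref{thm:affine-main-thm} since $R$ is a noetherian $\mathbf{N}$-graded ring finitely generated over a field $k$ (so in particular $R$ is essentially of finite type over the field $k$, which has geometrically regular formal fibers), together with the ascent of the splinter property along \'etale maps \cite[Thm.~A]{DT19}. One direction is immediate: if $R$ is a splinter then so is any localization of $R$ by the behavior of splinters under localization in the graded (or excellent) setting, hence $R_{\mathfrak m}$ is a splinter. So the content is the converse.

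First I would assume $R_{\mathfrak m}$ is a splinter and let $U \subseteq \Spec(R)$ denote the splinter locus, which is open by Theorem~\ref{thm:affine-main-thm}(2) and contains $\mathfrak m$. Since $\mathfrak m$ is the unique graded maximal ideal and every nonzero homogeneous prime is contained in $\mathfrak m$, the complement $\mathbf{V}(\uptau_R)$ is a closed set not containing $\mathfrak m$; the key point is that $\uptau_R$ is a \emph{homogeneous} ideal (the trace ideals $\uptau_{S/R}$ can be computed with graded finite extensions, or one argues that the non-splinter locus is stable under the natural $\mathbf{G}_m$-action coming from the grading), so $\mathbf{V}(\uptau_R)$ is a $\mathbf{G}_m$-stable closed subset of $\Spec(R)$ not containing the cone point $\mathfrak m$. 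Such a set must be empty: any graded prime contains no homogeneous element outside $\mathfrak m$ forcing it into $\mathbf{V}(\uptau_R)$, and the closure of any point of $\mathbf{V}(\uptau_R)$ under the $\mathbf{G}_m$-action limits to $\mathfrak m$, contradicting that $\mathfrak m \notin \mathbf{V}(\uptau_R)$. Hence $U = \Spec(R)$, so every local ring of $R$ is a splinter.

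To conclude that $R$ itself is a splinter, I would pass to the finite extension $S \in \mathcal{C}$ furnished by Theorem~\ref{thm:affine-main-thm}(3), which is generically \'etale, and observe that $R \hookrightarrow S$ splits after localizing at every prime (since every $\mathcal{O}_{\Spec R, \mathfrak p}$ is a splinter). The splitting of $R \hookrightarrow S$ is governed by the $R$-module $\coker(\Hom_R(S,R) \xrightarrow{\eval\MVAt 1} R) = R/\uptau_{S/R}$; since this module vanishes after localization at every prime (indeed $\uptau_{S/R} \supseteq \uptau_R = R$), it is zero, so $R \hookrightarrow S$ splits and $R$ is a splinter by Theorem~\ref{thm:affine-main-thm}(3).

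The main obstacle I anticipate is verifying that $\uptau_R$ (equivalently the non-splinter locus) is genuinely homogeneous and $\mathbf{G}_m$-stable: one needs either that the finite $R$-subalgebras of $R^+$ realizing the relevant traces can be chosen $\mathbf{N}$-graded, or an independent argument that the non-splinter locus is a cone. This is exactly where the ``homogeneous maximal ideal detects the property'' phenomenon lives, mirroring \cite[Cor.~4.6]{SL99}; once it is in hand the rest is formal from the openness result and the single-extension criterion of Theorem~\ref{thm:affine-main-thm}.
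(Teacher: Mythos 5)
Your outline follows the same strategy as the paper's proof (openness of the splinter locus, homogeneity of the radical ideal cutting out the non-splinter locus, and the observation that a proper homogeneous ideal lies in $\mathfrak m$, so the locus must be empty), but the step you yourself flag at the end as ``the main obstacle'' is exactly the mathematical content of the corollary, and it is not supplied. Neither of the two routes you gesture at is carried out: it is not clear (and the paper never claims) that the relevant traces $\uptau_{S/R}$ can be realized by \emph{graded} finite extensions, and $\mathbf{G}_m$-stability of the non-splinter locus is asserted rather than proved --- your ``limits to the cone point'' argument needs stability under the group \emph{scheme} action, i.e.\ compatibility of the splinter locus with base change along $R \to R\otimes_k k[t,t^{-1}]$, which you do not justify. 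The paper's resolution is the following: when $k$ is infinite, $k^\times$ acts on $R$ by honest ring automorphisms ($c\cdot x = c^n x$ for $x\in R_n$), automorphisms preserve the non-splinter locus, so its radical defining ideal is $k^\times$-stable, and stability under the action of an infinite $k^\times$ forces homogeneity. When $k$ is finite this implication fails, and the paper treats that case separately: writing $\overline{k}$ as a filtered union of finite \'etale extensions $\ell/k$, it uses \'etale ascent of the splinter property \cite[Thm.\ A]{DT19} to see that each $\ell\otimes_k R$ is a splinter at its homogeneous maximal ideal, uses that filtered colimits of splinters are splinters \cite[Prop.\ 5.2.5(ii)]{AD20} to pass to $\overline{k}\otimes_k R$, applies the infinite-field case, and then descends along the faithfully flat map $R \to \overline{k}\otimes_k R$. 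The finite residue field case is entirely missing from your proposal.

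A secondary issue: you invoke Theorem \ref{thm:splinter-F-split}, whose hypotheses require $R$ to be an $F$-pure \emph{domain} of prime characteristic, neither of which is assumed in the corollary. The domain condition can be extracted from $R_{\mathfrak m}$ being a splinter (minimal primes and the nilradical are homogeneous, hence visible in $R_{\mathfrak m}$), but deducing $F$-purity of $R$ from properties of $R_{\mathfrak m}$ alone is itself an instance of the ``detected at the homogeneous maximal ideal'' phenomenon you are trying to prove. The paper sidesteps this by using the scheme-level Theorem \ref{thm:F-split-locus-open}(ii) (with $A = k$), which carries no $F$-purity or domain hypotheses, to get openness of $\Spl(R)$; and once the non-splinter locus is empty one concludes directly, since the splinter condition is local, so your detour through splitting the single generically \'etale extension $S$ is correct but unnecessary. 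Finally, note that the statement as quoted allows characteristic $0$, where the result is classical (splinter equals normal); the machinery you invoke only applies in characteristic $p > 0$.
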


\noindent We expect the splinter loci to be open for arbitrary 
excellent schemes in \emph{any} characteristic. At the same time, one can
use a meta construction of Hochster \cite{Hoc73(b)} to give examples of
locally excellent (but not excellent) noetherian domains whose splinter
loci are not open. The surprising aspect of Hochster's construction is 
that the local rings of these locally excellent domains are essentially
of finite type over appropriate fields. We refer the reader to 
\cite[Ex.\ 4.0.3]{DT19} for more details.

\noindent \emph{Structure of the paper:} In Section \ref{sec:prelims}
we discuss the splinter property, the notion of approximately 
Gorenstein rings and the Frobenius map. In Section \ref{sec:F-compatible} we discuss uniformly
$F$-compatible ideals and their finiteness, 
traces of algebras and a closely related notion
which we call the ideal trace, and test ideals associated to closure
operations arising from algebras. In Section \ref{sec:splinter-locus} we
first identify and prove properties of a candidate ideal that detects 
the non-splinter locus of noetherian domains in arbitrary characteristic under certain stability assumptions. 
We also discuss properties of a separable version of this ideal using
Singh's work on the separable plus closure \cite{Sin99(a)}. We finally
specialize to prime characteristic and prove our main results.

\noindent \emph{Conventions:} All rings are commutative with identity. For the most
part rings in this paper will be noetherian. However, we will 
use the absolute integral closure of a domain, which is a highly 
non-noetherian ring. We say an $R$-algebra $S$ is \emph{solid} if
there exists a nonzero $R$-linear map $S \rightarrow R$.

\section{Preliminaries}
\label{sec:prelims}

\subsection{Splinters} 
Let us introduce the main objects of investigation in this paper.

\begin{definition}
\label{def:splinter}
A noetherian ring $A$ is a \emph{splinter} if every finite ring map $A \rightarrow B$
which is surjective on $\Spec$ admits an $A$-linear left-inverse.
\end{definition}

Hochster's famous direct summand conjecture, now a theorem \cite{Hoc73, Hei02, And18, Bha18}, 
is the
assertion that noetherian regular rings are splinters. Splinters are always normal, and  
conversely, a noetherian normal $\mathbf{Q}$-algebra is always splinter. Thus, splinters 
are an interesting notion of singularity mainly in prime and mixed characteristics. 
Many naturally arising classes of rings are splinters. For example, since a direct 
summand of a regular ring is a splinter, it follows that coordinate rings of 
affine toric varieties, Veronese subrings of polynomial rings over fields and 
rings of invariants of finite groups acting on regular rings where the order of the
group is prime to the characteristic of the ring are splinters. 
Moreover, determinantal rings over fields are splinters.

A simple spreading out argument shows that the splinter condition can be checked
locally. Furthermore, because a finite direct product of noetherian rings is a splinter precisely when
the individual factors are splinters, questions about splinters often immediately reduce
to the domain case. We refer the reader to \cite{Hoc73, Hoc83, Ma88, Bha12, Ma18, DT19} 
for various properties of these
singularities. The definition of a splinter and its derived variant can also be made
in a non-Noetherian setting, and interesting non-Noetherian rings are derived splinters
\cite{AD20}.

\subsection{Approximately Gorenstein rings and purity}
Let $A$ be a ring. An $A$-linear map 
$M \rightarrow N$
is \emph{pure} (also called \emph{universally injective}) if for all
$A$-modules $P$, the induced map
\[
M \otimes_A P \rightarrow N \otimes_A P
\]
is injective. We say $M \rightarrow N$ is \emph{cyclically pure} if
for all cyclic $A$-modules $P$, $M \otimes_A P \rightarrow N \otimes_A P$
is injective, or equivalently, if for all ideals $I$ of $A$, 
$M/IM \rightarrow N/IN$ is injective.

Split maps are pure and pure maps are cyclically pure. Filtered colimits of (cyclically) pure
maps are also (cyclically) pure, and faithfully flat ring maps are pure. 
Pure ring maps induce the affine covers for the canonical Grothendieck
topology, which is the finest topology for which all representable presheaves are
sheaves \cite{AF19}.

Hochster 
characterized noetherian rings $A$ with the property that every $A$-linear map 
$A \rightarrow M$ that is cyclically pure is also pure \cite{Hoc77}. 
His characterization utilizes the following notion.

\begin{definition}
\label{def:approximately-Gorenstein}
A noetherian local ring $(A, \mathfrak m)$ is approximately Gorenstein if for 
all $n \in \mathbf{Z}_{> 0}$, there exists an ideal $I$ such that $I \subseteq \mathfrak{m}^n$
and $A/I$ is Gorenstein. A noetherian ring is
\emph{approximately Gorenstein} if all its localizations at maximal ideals are approximately Gorenstein local rings.
\end{definition}

The ideals $I$ in 
Definition \ref{def:approximately-Gorenstein} can be chosen to be $\mathfrak{m}$-primary
\cite[Prop. 2.1]{Hoc77}. The class of approximately Gorenstein rings include noetherian normal rings, 
noetherian local rings of depth at least 2, 
reduced locally excellent rings, and more generally, noetherian rings whose local rings 
at maximal ideals are formally reduced. All of this is directly proved, or 
implied by the results in \cite{Hoc77}. The point of introducing the approximately 
Gorenstein property is the following result.

\begin{proposition}\cite[Thm.\ (2.6)]{Hoc77}
\label{prop:Hochster-approx-Gorenstein}
Let $A$ be a noetherian ring. The following are equivalent:
\begin{enumerate}
    \item $A$ is approximately Gorenstein.
    \item Any ring map $A \rightarrow B$ that is cyclically pure is pure.
    \item If $M$ is an $A$-module, then any $A$-linear map $A \rightarrow M$ that is
    cyclically pure is pure.
\end{enumerate}
\end{proposition}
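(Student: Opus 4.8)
The plan is to prove the cycle of implications $(1)\Rightarrow(3)\Rightarrow(2)\Rightarrow(1)$. The implication $(3)\Rightarrow(2)$ is formal: a ring homomorphism $A\to B$ is in particular an $A$-linear map into the $A$-module $B$, and in both the ring-theoretic and the module-theoretic sense ``cyclically pure'' is literally the same condition, namely that $A/I\to B/IB$ is injective for every ideal $I\subseteq A$; so $(3)$ applies verbatim to yield $(2)$.

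For $(1)\Rightarrow(3)$ I would first reduce to the local case. Since localization is exact and every ideal of $A_{\mathfrak{m}}$ (resp.\ every finitely generated $A_{\mathfrak{m}}$-module) is extended from $A$, an $A$-linear map $f\colon A\to M$ is pure (resp.\ cyclically pure) if and only if $f_{\mathfrak{m}}\colon A_{\mathfrak{m}}\to M_{\mathfrak{m}}$ is pure (resp.\ cyclically pure) for every maximal ideal $\mathfrak{m}$; and $A$ is approximately Gorenstein precisely when each $A_{\mathfrak{m}}$ is, by definition. So assume $(A,\mathfrak{m})$ is local and put $E\coloneqq E_A(A/\mathfrak{m})$. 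The key background fact is that $f$ is pure if and only if $E\otimes_A f\colon E\to E\otimes_A M$ is injective: one direction is immediate, and for the other one passes to the $\mathfrak{m}$-adic completion (purity descends along the faithfully flat map $A\to\widehat{A}$) and uses that over a complete local ring any pure $A$-linear map out of the ring splits, via Matlis duality. Now, by Hochster's observation that the ideals in the definition may be taken $\mathfrak{m}$-primary (\cite[Prop.\ 2.1]{Hoc77}) together with a routine chaining argument, approximate Gorensteinness furnishes a descending chain of $\mathfrak{m}$-primary ideals $I_1\supseteq I_2\supseteq\cdots$ with $\bigcap_t I_t=0$ that is cofinal with $\{\mathfrak{m}^n\}_n$ and has each $A/I_t$ Artinian Gorenstein, so that $E_{A/I_t}(A/\mathfrak{m})\cong A/I_t$. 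Then $(0:_E I_t)=\Hom_A(A/I_t,E)\cong E_{A/I_t}(A/\mathfrak{m})\cong A/I_t$, and cofinality gives $E=\bigcup_t(0:_E I_t)\cong\varinjlim_t A/I_t$, the transition maps $A/I_t\hookrightarrow A/I_{t+1}$ being (necessarily injective) $A$-linear maps. Applying $-\otimes_A M$ to this presentation and using that it commutes with filtered colimits and that filtered colimits are exact, $E\otimes_A f$ is identified with the filtered colimit of the maps $A/I_t\otimes_A f\colon A/I_t\to M/I_tM$. If $f$ is cyclically pure, each of these is injective, hence so is their colimit, and therefore $f$ is pure.

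The implication $(2)\Rightarrow(1)$ is the delicate part, and I expect it to be the main obstacle. One argues by contraposition: assuming $A$ is not approximately Gorenstein, one must produce a cyclically pure ring map $A\to B$ that fails to be pure. After localizing at a maximal ideal where the property fails, and (with some care) reducing to the complete local case, one invokes Hochster's structural analysis of the failure: a complete local ring is \emph{not} approximately Gorenstein exactly when it has a ``small'' associated prime---the criterion being phrased in terms of associated primes $\mathfrak{p}$ with $\dim(A/\mathfrak{p})\le 1$---equivalently, when $E_A(A/\mathfrak{m})$ cannot be written as an increasing union of cyclic submodules. From such an associated prime, for instance using Cohen's structure theorem to present $A$ as a quotient of a regular local ring, one writes down an explicit (and, if desired, module-finite) ring extension $A\hookrightarrow B$ for which $IB\cap A=I$ holds for every ideal $I$ of $A$ while $A$ is not a direct summand of $B$. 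The technical heart of the proof is verifying that this candidate extension simultaneously has cyclic purity and fails purity; by contrast, the positive implication $(1)\Rightarrow(3)$ above is comparatively soft.
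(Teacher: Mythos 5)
First, note that the paper does not prove this statement at all: it is quoted verbatim from Hochster as \cite[Thm.\ (2.6)]{Hoc77}, so there is no internal proof to compare against. Judged on its own terms, your proposal is solid on two of the three implications. The reduction of $(1)\Rightarrow(3)$ to the local case is fine, the chaining of $\mathfrak{m}$-primary Gorenstein ideals $I_1\supseteq I_2\supseteq\cdots$ cofinal with $\{\mathfrak{m}^n\}$ is indeed routine once one knows the ideals can be taken $\mathfrak{m}$-primary, and the identification $E_A(A/\mathfrak{m})=\varinjlim_t A/I_t$ together with the purity criterion ``$f$ is pure iff $E\otimes_A f$ is injective'' gives exactly Hochster's argument; in fact that criterion holds for any noetherian local ring by an elementary essential-extension argument (take $0\neq x\in\ker(P\otimes f)$ with $P$ finitely generated, pass to $P/Q$ with $Q$ maximal avoiding $x$, and embed $P/Q$ into $E$), so your detour through the completion and Matlis duality, while workable, is not needed. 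The implication $(3)\Rightarrow(2)$ is indeed formal.

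The genuine gap is $(2)\Rightarrow(1)$, which you yourself flag as unverified. What you offer there is a plan, not a proof: you assume Hochster's structural characterization of the failure of approximate Gorensteinness for complete local rings (his Theorem 1.6, a substantial result whose proof is comparable in difficulty to the implication you are trying to establish), you do not carry out the reduction from $A$ to $\widehat{A}$ (one needs, e.g., that $A$ is approximately Gorenstein iff $\widehat{A}$ is, and that a cyclically-pure-but-not-pure map over $\widehat{A}$ can be converted into one over $A$), and most importantly you never exhibit the extension $A\hookrightarrow B$ nor verify that it is cyclically pure while failing purity --- which is precisely ``the technical heart'' you defer. So as a proof of the stated equivalence the proposal is incomplete. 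It is worth adding that the paper only ever invokes the direction $(1)\Rightarrow(2),(3)$ (e.g., in Lemma \ref{lem:contraction-ideals}(2) and the discussion of splinters via cyclic purity), and that direction your argument does establish correctly; but the proposition as stated asserts the full equivalence, and the converse direction is missing.
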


The approximately Gorenstein property implies that  a noetherian
domain $A$ is a splinter precisely when $A \rightarrow A^+$ is cyclically pure
\cite[Lem.\ 2.3.1]{DT19}. This observation leads to various equivalent interpretations
of an ideal that defines the non-splinter locus of a noetherian domain
in Section \ref{sec:splinter-locus}, making this ideal easier to
work with. One such interpretation involves big test ideals
of algebra closures (Proposition \ref{prop:algebra-closure}(4)), for
which we need the following result.

\begin{lemma}
\label{lem:approx-Gorenstein}
Let $A$ be an approximately Gorenstein ring.
Suppose that there exists a maximal ideal $\mathfrak m$ of $A$ and an essential extension
$A/\mathfrak{m} \hookrightarrow M$, where $M$ is a finitely generated $A$-module.
Then there exists an $\mathfrak{m}$-primary ideal $I$ of $A$ such that $A/I$ is Gorenstein
and $M$ embeds in $A/I$.
\end{lemma}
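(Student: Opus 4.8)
The plan is to reduce the statement about the finitely generated module $M$ to a statement about a single Gorenstein quotient $A/I$ by exploiting the structure of essential extensions of the residue field over a local ring. First I would pass to the localization $A_{\mathfrak m}$: since $M$ is finitely generated and $A/\mathfrak m \hookrightarrow M$ is an essential extension of $A$-modules supported at $\mathfrak m$, the module $M$ is already an $A_{\mathfrak m}$-module (every element is killed by a power of $\mathfrak m$, as $A/\mathfrak m \hookrightarrow M$ essential forces $M$ to have support $\{\mathfrak m\}$, and a finitely generated such module is $\mathfrak m^N$-torsion for some $N$). So without loss of generality $A = (A, \mathfrak m)$ is an approximately Gorenstein \emph{local} ring. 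Over the complete local ring $\widehat A$, essential extensions of $A/\mathfrak m = \widehat A/\mathfrak m \widehat A$ are exactly the submodules of the injective hull $E = E_{\widehat A}(A/\mathfrak m)$; I would base change $M$ to $\widehat A$ (which is harmless since $M$ is $\mathfrak m^N$-torsion, so $M = M \otimes_A \widehat A$ and the embedding is preserved), and then $M \hookrightarrow E$ as $\widehat A$-modules. The key numerical input is $\ell_{\widehat A}(M) < \infty$, so choose $n$ with $\ell(M) \le \ell(\widehat A/\mathfrak m^n \widehat A)$, or more precisely so that $M \subseteq 0 :_E \mathfrak m^n$ — indeed $M$ is $\mathfrak m^N$-torsion, so $M \subseteq (0:_E \mathfrak m^N)$ for $N$ large.

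Now invoke the approximately Gorenstein hypothesis: there exists an $\mathfrak m$-primary ideal $I \subseteq \mathfrak m^N$ with $A/I$ Gorenstein (using \cite[Prop. 2.1]{Hoc77} to get $\mathfrak m$-primary). The crucial point is that for a Gorenstein Artinian local ring $A/I$, its injective hull $E_{A/I}(A/\mathfrak m)$ is $A/I$ itself, and under the identification $E_{A/I}(A/\mathfrak m) = (0 :_E I) \subseteq E$, we have $A/I \cong (0:_E I)$ as $A$-modules. Since $I \subseteq \mathfrak m^N$, we get $(0 :_E \mathfrak m^N) \subseteq (0 :_E I) \cong A/I$. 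Combined with $M \subseteq (0:_E \mathfrak m^N)$ from the previous paragraph, this yields an embedding $M \hookrightarrow A/I$ of $A/I$-modules, hence of $A$-modules, which is exactly the conclusion. I would include a line verifying that the resulting embedding is $A$-linear (it is, since all the identifications are of $A$-modules) and that $A/I$ Gorenstein with $I$ being $\mathfrak m$-primary gives the statement over the original non-local $A$ by viewing $A/I$ as a quotient of $A$ (the ideal $I$ pulls back from $A_{\mathfrak m}$ to an $\mathfrak m$-primary ideal of $A$ with the same quotient, since $\mathfrak m$-primary ideals correspond bijectively between $A$ and $A_{\mathfrak m}$ and Gorensteinness is a local condition).

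The main obstacle, and the step I would be most careful about, is the identification $A/I \cong (0 :_E I)$ and more generally keeping track of which injective hull one is working over — $E_A(A/\mathfrak m)$ versus $E_{\widehat A}(A/\mathfrak m)$ versus $E_{A/I}(A/\mathfrak m)$ — and ensuring the chain of embeddings is genuinely $A$-linear rather than merely $\widehat A$-linear or $A/I$-linear. The cleanest route is to do everything over $\widehat A$ using Matlis duality: $(0:_E I)$ is the Matlis dual of $\widehat A/I$, and for $\widehat A/I$ Gorenstein Artinian this dual is isomorphic to $\widehat A/I$ itself as an $\widehat A/I$-module, hence as an $\widehat A$-module, hence restricting scalars along $A \to \widehat A$ as an $A$-module; and since $M$ is a finite-length $A$-module with $M = M \otimes_A \widehat A$, an $\widehat A$-linear embedding $M \hookrightarrow \widehat A/I$ is automatically $A$-linear. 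A secondary subtlety is justifying $M \subseteq (0 :_E \mathfrak m^N)$: this is just that $M$, being finitely generated with support $\{\mathfrak m\}$, satisfies $\mathfrak m^N M = 0$ for $N \gg 0$, so every element of $M \subseteq E$ is annihilated by $\mathfrak m^N$. Once these identifications are pinned down the argument is short.
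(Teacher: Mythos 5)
Your proposal is correct, but it reaches the conclusion by a noticeably heavier route than the paper. The paper's proof never passes to the completion or to an injective hull: it notes that the annihilator $J$ of $M$ is $\mathfrak m$-primary, uses approximate Gorensteinness of $A_{\mathfrak m}$ (plus the bijection between $\mathfrak m$-primary ideals of $A$ and of $A_{\mathfrak m}$) to pick an $\mathfrak m$-primary ideal $I \subseteq J$ of $A$ with $A/I$ Gorenstein, and then observes that $A/\mathfrak m \hookrightarrow M$ is an essential extension of modules over the zero-dimensional Gorenstein ring $A/I$, which is injective over itself; the socle inclusion $A/\mathfrak m \hookrightarrow A/I$ therefore extends along $A/\mathfrak m \hookrightarrow M$ to a map $f \colon M \to A/I$, and $f$ is injective precisely because the extension is essential. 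Your argument arrives at the same place by embedding $M$ into $E = E_{\widehat A}(A/\mathfrak m)$, noting $M \subseteq (0:_E \mathfrak m^N) \subseteq (0:_E I)$ for $I \subseteq \mathfrak m^N$ with $\mathfrak m^N M = 0$, and identifying $(0:_E I)$ with $A/I$ by Matlis duality for the Artinian Gorenstein quotient; your choice $I \subseteq \mathfrak m^N \subseteq J$ is just a special case of the paper's $I \subseteq J$, and your bookkeeping on $A$-linearity and on descending $I$ to an $\mathfrak m$-primary ideal of the non-local $A$ is sound. What your route buys is a structural picture in which everything sits inside one ambient injective hull (and it makes the identification $A/I \cong E_{A/I}(A/\mathfrak m)$ explicit); what the paper's buys is brevity: the self-injectivity of $A/I$, applied directly to the essential extension $A/\mathfrak m \hookrightarrow M$, replaces the completion, the injective hull, and Matlis duality in one stroke.
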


\begin{proof}
The annihilator $J$ of $M$ is $\mathfrak m$-primary. Since $A_{\mathfrak m}$
is approximately Gorenstein, there exists an $\mathfrak{m}$-primary ideal $I$ of $A$
such that $I \subseteq J$ and $A/I = A_{\mathfrak m}/IA_{\mathfrak m}$ is Gorenstein. Then
$A/\mathfrak m \hookrightarrow M$ is an essential extension of $A/I$-modules, and since 
$A/I$ is a zero-dimensional Gorenstein ring, it is an injective module over itself. 
As $\mathfrak m$ is the associated prime of $A/I$, we have an inclusion
$A/\mathfrak m \hookrightarrow A/I$, which extends to a map $f \colon M \rightarrow A/I$. 
Then $f$ is injective because $A/\mathfrak m \hookrightarrow M$ is essential.
\end{proof}

\subsection{Frobenius} Let $R$ be a ring of prime characteristic $p > 0$. Then we
have the (absolute) Frobenius map 
$$F \colon R \rightarrow R$$ 
that maps $r \mapsto r^p$. For 
$e \in \mathbf{Z}_{>0}$ we also have the $e$-th interate $F^e$ of the Frobenius. 
The target copy of $R$ regarded as an $R$-module
by restriction of scalars along $F^e$ will be denoted $F^e_*R$, and for $x \in R$
the same element viewed in $F^e_*R$ will be denoted $F^e_*x$. Thus, if $r \in R$
and $F^e_*x \in F^e_*R$, $r \cdot F^e_*x = F^e_*r^{p^e}x$.

We say $R$ is \emph{$F$-finite} if the Frobenius map is finite 
(equivalently, of finite type). We say $R$ is \emph{Frobenius split} if $F$ admits an $R$-linear left-inverse
$F_*R \rightarrow R$, and $R$ is $F$-pure if $F$ is a pure map. Frobenius 
splitting and $F$-purity do not coincide in general even for nice rings. For example, 
the first author and Murayama recently constructed examples of excellent Henselian
regular local rings of Krull dimension $1$ which are not Frobenius split \cite{DM20}. 
Regular rings of prime characteristic are always $F$-pure because the Frobenius map of
a regular ring is faithfully flat \cite{Kun69}. When $R$
is noetherian, purity of Frobenius is the same as cyclic purity \cite[Rem.\ 2.2.2]{DT19}, 
so we will not introduce a new definition for when the Frobenius is
cyclically pure even though such a definition would make sense in a non-noetherian
context.

If $R$ is a splinter of prime characteristic $p > 0$, then $R$ is $F$-pure because the
Frobenius map can be expressed as a filtered colimit of finite purely inseparable maps
which are all split by the definition of the splinter property.

Given a noetherian $R$ of prime characteristic $p > 0$, an $R$-module
$M$ and a submodule $N$ of $M$, the \emph{tight closure of $N$ in
$M$}, denoted $N^*_M$ is the set of elements $m \in M$ for which 
there exists $c \in R$ not in any minimal prime such that for all
$e \gg 0$,
\[
F^e_*c \otimes m \in \im(F^e_*R \otimes_R N \rightarrow F^e_*R \otimes_R M).
\]
For $M = R$ and $N = I$ an ideal, the tight closure of $I$ in $R$ is
usually denoted $I^*$ and consists of elements $r \in R$
for which there exists a $c$ not in any minimal prime such that
$cr^{p^e} \in I^{[p^e]}$ for all $e \gg 0$. Here $I^{[p^e]}$ is the
ideal generated by the $p^e$-th powers of elements of $I$. 
Alternatively, $F^e_*I^{[p^e]} = IF^e_*R$, the expansion of $I$ to $F^e_*R$.

\section{Uniformly \texorpdfstring{$F$}{F}-compatible and trace ideals}
\label{sec:F-compatible}

Throughout this section, $R$ will denote a ring of prime 
characteristic $p > 0$. 

\subsection{Uniformly \texorpdfstring{$F$}{F}-compatible ideals}Following Schwede 
\cite[Def.\ 3.1]{Sch10}, we make the 
following definition in a non-$F$-finite setting.

\begin{definition}
\label{def:F-compatible}
An ideal $I$ of $R$ is \emph{uniformly $F$-compatible} if for all
$e \in \mathbf{Z}_{> 0}$ and all $R$-linear maps 
$\varphi \colon F^e_*R \rightarrow R$, we have $\varphi(F^e_*I) 
\subseteq I$.
\end{definition}

\noindent Said differently, $I$ is uniformly $F$-compatible if for any 
$\varphi$ as above, we have an induced $R/I$-linear map
$\overline{\varphi} \colon F^e_*(R/I) \rightarrow R/I$
such that the diagram
\[
\begin{tikzcd}
F^e_*R \arrow{r}{\varphi} \arrow[two heads]{d}{F^e_*\pi}
               &R \arrow[two heads]{d}{\pi}\\
F^e_*(R/I) \arrow{r}{\overline{\varphi}} &R/I
\end{tikzcd}
\]
commutes, where $\pi \colon R \rightarrow R/I$ is the canonical
projection. 

Uniformly $F$-compatible ideals are related to Mehta and 
Ramanathan's notion of compatibly split subschemes of a 
Frobenius split scheme \cite{MR85}, and
indeed were named so by Schwede because of this
connection. In the noetherian local case, uniformly 
$F$-compatible ideals are dual to Smith and Lyubeznik's
$\mathcal{F}(E)$-submodules of $E$ \cite{LS01}, where 
$E$ is the injective
hull of the residue field of the noetherian local ring 
$(R, \mathfrak{m})$.

We now collect some well-known properties of uniformly 
$F$-compatible ideals. 

\begin{lemma}
\label{lem:F-compatible-properties}
Let $R$ be a ring of prime characteristic $p > 0$ and let $\Sigma$
denote the collection of uniformly $F$-compatible ideals of $R$.
Then we have the following:
\begin{enumerate}
	\item $\Sigma$ is closed under arbitrary sums and arbitrary
	intersections.
	\item If $I \in \Sigma$ and $\mathfrak p \in \Ass_R(R/I)$, 
	then $\mathfrak p \in \Sigma$.
	\item If $R$ is noetherian and $I \in \Sigma$, then $\sqrt{I} \in \Sigma$.
	\item If $R$ is noetherian, then $\nil(R) \in \Sigma$.
	\item If $R$ is Frobenius split, then every element of $\Sigma$
	is a radical ideal.
\end{enumerate}
\end{lemma}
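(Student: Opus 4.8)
The plan is to prove the five assertions in the listed order, since (3)--(5) rest on (1) and (2). The one device used repeatedly is the following twist: for a fixed $R$-linear map $\varphi\colon F^e_*R\to R$ and any $c\in R$, the map $\varphi_c\colon F^e_*R\to R$ given by $\varphi_c(F^e_*r)\coloneqq\varphi(F^e_*(cr))$ is again $R$-linear, since $\varphi_c(r'\cdot F^e_*r)=\varphi(F^e_*(cr'^{p^e}r))=r'\varphi_c(F^e_*r)$ using $r'\cdot F^e_*s=F^e_*(r'^{p^e}s)$; hence the defining inclusion of uniform $F$-compatibility may be applied to $\varphi_c$ as well. With this in hand, (1) is immediate: for a family $\{I_\alpha\}$ in $\Sigma$ and an $R$-linear $\varphi\colon F^e_*R\to R$, if $s\in\bigcap_\alpha I_\alpha$ then $\varphi(F^e_*s)\in I_\alpha$ for every $\alpha$; and if $s=\sum_{i=1}^n a_i$ with $a_i\in I_{\alpha_i}$, then additivity of $F^e_*$ gives $\varphi(F^e_*s)=\sum_i\varphi(F^e_*a_i)$ with $\varphi(F^e_*a_i)\in I_{\alpha_i}$, so $\varphi(F^e_*s)\in\sum_\alpha I_\alpha$. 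There is no real obstacle here.

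For (2), write $\mathfrak p=(I:x)$ for some $x\in R$, as afforded by the definition of an associated prime. Fix $a\in\mathfrak p$ and an $R$-linear $\varphi\colon F^e_*R\to R$; the goal is $\varphi(F^e_*a)\in\mathfrak p$, equivalently $x\varphi(F^e_*a)\in I$. Applying the twist $\varphi_{x^{p^e-1}}$ to $F^e_*(xa)$ and using $R$-linearity of $\varphi$ together with $x\cdot F^e_*a=F^e_*(x^{p^e}a)$ yields
\[
\varphi_{x^{p^e-1}}(F^e_*(xa))=\varphi\bigl(F^e_*(x^{p^e}a)\bigr)=x\,\varphi(F^e_*a).
\]
Since $xa\in I$ and $\varphi_{x^{p^e-1}}(F^e_*I)\subseteq I$ by uniform $F$-compatibility of $I$, we get $x\varphi(F^e_*a)\in I$, i.e. $\varphi(F^e_*a)\in(I:x)=\mathfrak p$. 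This twisting argument is the one genuinely non-formal step, and I expect it to be the main point of the lemma.

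For (3), when $R$ is noetherian one has $\sqrt I=\bigcap_{\mathfrak p\in\Ass_R(R/I)}\mathfrak p$: each $\mathfrak p\in\Ass_R(R/I)$ is prime and contains $I$, hence contains $\sqrt I$, while the minimal primes over $I$ already lie in $\Ass_R(R/I)$ and their intersection is $\sqrt I$; thus $\sqrt I\in\Sigma$ by (2) and the intersection half of (1). Since $(0)\in\Sigma$ trivially, taking $I=(0)$ in (3) gives $\nil(R)=\sqrt{(0)}\in\Sigma$, which is (4). For (5), let $\varphi\colon F_*R\to R$ be a Frobenius splitting, $I\in\Sigma$, and $x\in\sqrt I$, so that $x^{p^e}\in I$ for some $e$. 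The $e$-fold iterate $\varphi_e\coloneqq\varphi\circ(F_*\varphi)\circ\cdots\circ(F^{e-1}_*\varphi)\colon F^e_*R\to R$ is $R$-linear and satisfies $\varphi_e(F^e_*r^{p^e})=r$ for all $r\in R$, by a short induction on $e$ from the relation $\varphi(F_*r^p)=r$. Applying this with $r=x$ and using that $I$ is uniformly $F$-compatible, $x=\varphi_e(F^e_*x^{p^e})\in I$; hence $\sqrt I\subseteq I$ and $I$ is radical. (Equivalently, the commuting square in Definition \ref{def:F-compatible} exhibits the induced $\overline\varphi$ as a Frobenius splitting of $R/I$, and a Frobenius split ring is reduced by the same iteration.)
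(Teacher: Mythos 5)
Your proof is correct and follows essentially the same route as the paper: (1) from the definition, (2) via the identity $a\,\varphi(F^e_*r)=\varphi(F^e_*(a^{p^e}r))$ with $a^{p^e}r\in I$ (your twisted map $\varphi_{x^{p^e-1}}$ is just a repackaging of applying $\varphi$ to this element of $I$), (3)--(4) from (1)--(2), and (5) by the fact that a Frobenius splitting passes to $R/I$, whose reducedness is exactly your iterated-splitting computation carried out inside $R$. No gaps.
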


\begin{proof}
(1) follows readily from the definition of 
uniform $F$-compatibility. For (2), assume $\mathfrak p$ is an
associated prime of $I$. Then there exists $a \notin I$ such that
$\mathfrak p = (I:a)$. Suppose $\varphi \colon F^e_*R \rightarrow R$
is an $R$-linear map. To show that $\varphi(F^e_*\mathfrak{p}) \subseteq
\mathfrak{p}$ it suffices to show that for any $r \in \mathfrak{p}$,
$\varphi(F^e_*r)a \in I$. But
\[
\varphi(F^e_*r)a = \varphi(F^e_*a^{p^e}r) \in I
\]
because $a^{p^e}r = a^{p^e-1}(ar) \in I$ and $I$ is uniformly $F$-compatible by 
assumption. Part (3) follows from (1) and (2) because
$\sqrt{I}$ is the intersection of the minimal primes of $I$,
which are associated primes of $R/I$ when $R$ is noetherian. 
Part (4) follows from (3) because the zero ideal is uniformly $F$-compatible.
Finally, (5) follows from the fact that if 
$\varphi \colon F_*R \rightarrow R$ is a Frobenius splitting of $R$,
then the induced map $\overline{\varphi} \colon F_*(R/I) 
\rightarrow R/I$ is a Frobenius splitting of $R/I$, and any 
Frobenius split ring is reduced.
\end{proof}

\begin{remark}
\label{rem:compatible-fixed-map}
Fixing an $R$-linear map $\varphi \colon F^e_*R \rightarrow R$, one
can also consider the set of ideals $I$ of $R$ that are \emph{$\varphi$-compatible} 
in the sense that $\varphi(F^e_*I) \subseteq I$.
Thus, a uniformly $F$-compatible ideal is one that is compatible
with any map $F^e_*R \rightarrow R$, for all $e > 0$. Properties 
(1)-(4) of Lemma \ref{lem:F-compatible-properties} are also satisfied
by the set of ideals compatible with a fixed $\varphi$, while 
property (5) is satisfied if $\varphi$ is a Frobenius splitting.
\end{remark}

\begin{remark}
For a fixed $R$-linear map $\varphi \colon F^e_*R \to R$, one should make a point to contrast the 
$\varphi$-compatible ideals as in Remark \ref{rem:compatible-fixed-map} with those ideals $I$ 
satisfying the stronger condition that $\varphi(F^e_*I) = I$. There is no distinction when 
$\varphi$ is surjective; when $\varphi$ is not surjective, such ideals are known to satisfy 
finiteness properties akin to those for the ideals compatible with Frobenius splittings. 
Similar remarks apply to the definition of uniformly compatible ideals in general. 
See \cite{BB11,Bli13} for further details.
\end{remark}

\subsection{Trace and ideal trace} The most important examples of uniformly 
$F$-compatible ideals in this paper
are traces of ring maps. 

\begin{definition}
\label{def:trace-ideals}
Let $A$ be a ring (not necessarily of prime characteristic) and $B$ be an 
$A$-algebra. Then the \emph{trace
 of $B$ over $A$}, denoted $\uptau_{B/A}$, is
\[
\uptau_{B/A} \coloneqq \im(\Hom_A(B,A) \xrightarrow{\textrm{eval $\MVAt 1$}} A).
\]
\end{definition}

Thus, $\uptau_{B/A} \neq 0$ precisely when $B$ admits a nonzero 
$A$-linear map $B \rightarrow A$, that is, if $B$ is a 
\emph{solid} $A$-algebra in the terminology of Hochster \cite{Hoc94}. Similarly,
$A$ is a direct summand of $B$ precisely when $\uptau_{B/A} = A$.

The next example shows that traces are  
related to the field trace from linear algebra.

\begin{example}
Suppose $A \hookrightarrow B$ is a finite extension of noetherian normal domains
which is \'etale in codimension $1$ (that is, for all height $1$ prime ideals
$\mathfrak{q}$ of $B$, $A_{\mathfrak{q} \cap A} \rightarrow B_{\mathfrak q}$ is
essentially (aka local) \'etale). 
If $K$ (resp. $L$) is the
fraction field of $A$ (resp. B), then the field trace
\[
\Tr \colon L \rightarrow K
\]
is a nonzero map because $L/K$ is separable 
\cite[\href{https://stacks.math.columbia.edu/tag/0BIL}{Tag 0BIL}]{stacks-project}. 
Since $A$ is normal,
$\Tr(B) \subseteq A$ because the minimal polynomial of any element of
$B$ over $K$ has coefficients in $A$ 
\cite[\href{https://stacks.math.columbia.edu/tag/0BIH}{Tag 0BIH}]{stacks-project}. 
Thus, restricting $\Tr$ to $B$ induces a nonzero
$A$-linear map $B \rightarrow A$, which, abusing notation, we 
also denote by $\Tr$. Then it is well-known that
$\Hom_A(B, A)$ is generated as a $B$-module by $\Tr$ 
(one can apply \cite[Prop.\ 4.8]{ST14} using the fact that the ramification
divisor of a map \'etale in codimension $1$ is trivial). That is,
any $A$-linear map $B \rightarrow A$ is of the form $\Tr(b\cdot \_)$,
for some $b \in B$. Consequently, $\uptau_{B/A}$ coincides 
with the image of
the trace map $\Tr \colon B \rightarrow A$.
\end{example}

\begin{lemma}
\label{lem:trace-ideals}
Let $A$ be a ring and $B$ be an $A$-algebra.
\begin{enumerate}
	\item $\uptau_{B/A} = \sum \im(\varphi)$, where $\varphi$
	ranges over all elements of $\Hom_A(B,A)$.
	\item If $A$ has prime characteristic $p > 0$, then
	$\uptau_{B/A}$ is uniformly $F$-compatible.
	\item If $B$ is finitely presented as an $A$-module, then
	for all prime ideals $\mathfrak{p}$ of $A$, 
	$\uptau_{B_{\mathfrak p}/A_{\mathfrak p}} = (\uptau_{B/A})_{\mathfrak p}$.
	\item If $A \rightarrow C$ is a flat ring map and $B$ is a 
	finitely presented $A$-module, then $\uptau_{B/A}C
	= \uptau_{B \otimes_A C/C}$.
\end{enumerate}
\end{lemma}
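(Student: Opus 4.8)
My plan is to prove the four parts essentially independently, each directly from the definition $\uptau_{B/A} = \im(\Hom_A(B,A) \xrightarrow{\eval \MVAt 1} A)$, with parts (3) and (4) both reduced to the standard fact that $\Hom$ commutes with flat base change when the source module is finitely presented.

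For (1), I would observe that the image of $\Hom_A(B,A) \to A$ under evaluation at $1$ is literally $\{\varphi(1) : \varphi \in \Hom_A(B,A)\}$, and that this set is closed under addition (since $(\varphi + \psi)(1) = \varphi(1) + \psi(1)$) and under multiplication by $A$ (since for $a \in A$, $a\varphi(1) = (a\varphi)(1)$), so it is an ideal. On the other hand, for any $\varphi$, $\im(\varphi)$ is the $A$-submodule of $A$ generated by $\varphi(b)$ as $b$ ranges over $B$; but $\varphi(b) = (\varphi \circ \mu_b)(1)$ where $\mu_b \colon B \to B$ is multiplication by $b$, and $\varphi \circ \mu_b \in \Hom_A(B,A)$, so $\im(\varphi) \subseteq \uptau_{B/A}$ for every $\varphi$; conversely $\varphi(1) \in \im(\varphi)$, giving the reverse inclusion. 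This yields $\uptau_{B/A} = \sum_\varphi \im(\varphi)$.

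For (2), I would fix an $R$-linear map $\psi \colon F^e_*R \to R$ (here $R = A$) and an element $t \in \uptau_{B/A}$, and show $\psi(F^e_* t) \in \uptau_{B/A}$. Write $t = \varphi(1)$ for some $\varphi \in \Hom_A(B,A)$. The key point is that $\psi \circ (F^e_*\varphi) \colon F^e_*B \to R$ is $R$-linear — wait, I need a map from $B$, not $F^e_* B$. The cleaner route: the composite $B \xrightarrow{\varphi} R = F^e_* R$ \dots actually the right construction is to consider $\psi \circ F^e_*(\varphi) \colon F^e_* B \to R$, which exhibits $F^e_*B$ as a solid $R$-algebra, but I want solidity of $B$ itself. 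Instead I would use that $\uptau_{B/A}$ being uniformly $F$-compatible follows from part (1) together with the general principle (Lemma \ref{lem:F-compatible-properties}, proof of (2)-type arguments) that a sum of images of $A$-linear maps into $A$ that is stable under precomposition is $F$-compatible: given $\psi \colon F^e_*R \to R$ and $t = \varphi(b)$, note $\psi(F^e_* \varphi(b)) = \psi(F^e_*((F^e_*\psi \circ \text{shift})\dots))$ — the honest statement is that $\Hom_R(F^e_*R, R) \cong F^e_* \Hom_R(R, F^e_*R)$ acts on $\Hom_R(B,R)$, so $\psi \circ$ (the $R$-linear extension of $\varphi$ along $R = F^e_*R$) is again in $\Hom_R(B,R)$, hence its image lies in $\uptau_{B/R}$; since $\psi(F^e_* \varphi(b))$ lies in that image, we are done. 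I expect this compatibility-of-composition bookkeeping to be the one genuinely fiddly point, though it is standard.

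For (3) and (4), the engine is: if $B$ is finitely presented as an $A$-module and $A \to C$ is flat, then the natural map $\Hom_A(B,A) \otimes_A C \to \Hom_C(B \otimes_A C, C)$ is an isomorphism (\cite[Tag 0583]{stacks-project} or the standard flat-base-change statement). Part (4) then follows by applying $\otimes_A C$ to the defining surjection $\Hom_A(B,A) \twoheadrightarrow \uptau_{B/A} \hookrightarrow A$: since $- \otimes_A C$ is right exact, $\uptau_{B/A} \otimes_A C \to \uptau_{B/A} C \subseteq C$ identifies $\uptau_{B/A}C$ with the image of $\Hom_A(B,A) \otimes_A C = \Hom_C(B\otimes_A C, C)$ under evaluation at $1$, which is exactly $\uptau_{B\otimes_A C/C}$ (compatibility of the two evaluation maps under the base-change isomorphism is immediate). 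Part (3) is the special case $C = A_{\mathfrak p}$, using that localization is flat and $B_{\mathfrak p} = B \otimes_A A_{\mathfrak p}$. I do not anticipate obstacles here beyond checking that the base-change isomorphism intertwines the evaluation-at-$1$ maps, which is a diagram chase on elements.
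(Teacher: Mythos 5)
Parts (1), (3) and (4) of your plan are correct and essentially identical to the paper's proof: (1) by the multiplication-by-$b$ trick $\varphi(b)=(\varphi\circ\mu_b)(1)$, and (3)--(4) by flat base change of $\Hom$ for a finitely presented module together with right-exactness (the paper proves (3) directly and then (4) by tensoring the exact sequence $\Hom_A(B,A)\to A\to A/\uptau_{B/A}\to 0$; your reduction of (3) to (4) with $C=A_{\mathfrak p}$ is the same content in a different order).

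Part (2), however, has a genuine gap: you never actually produce an $A$-linear map $B\to A$ whose value at $1$ is $\psi(F^e_*t)$, and the constructions you gesture at do not work. Composition on the target side alone cannot do it: to feed $\varphi$ into $\psi$ you need an $A$-linear map $B\to F^e_*A$, and the only way to get one from $\varphi$ with the correct value at $1$ is to precompose with the Frobenius endomorphism of $B$, viewed as an $A$-linear map $F^e\colon B\to F^e_*B$, $x\mapsto F^e_*x^{p^e}$ (this is where the algebra structure of $B$ is genuinely used). The paper's argument is exactly this: for $t\in\uptau_{B/A}$ pick $\varphi\in\Hom_A(B,A)$ with $\varphi(1)=t$; then
\[
B \xrightarrow{\;F^e\;} F^e_*B \xrightarrow{\;F^e_*\varphi\;} F^e_*A \xrightarrow{\;\psi\;} A
\]
is $A$-linear and sends $1\mapsto\psi(F^e_*\varphi(1))=\psi(F^e_*t)$, so $\psi(F^e_*t)\in\uptau_{B/A}$. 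Your first attempt ($\psi\circ F^e_*\varphi$) is a map out of $F^e_*B$, as you noticed; your fallback, viewing $\varphi$ as landing in $F^e_*R$, amounts to postcomposing $\varphi$ with the Frobenius of $A$, which is $A$-linear but sends $1\mapsto\psi(F^e_*t^{p^e})$, proving only $\psi(F^e_*\uptau_{B/A}^{[p^e]})\subseteq\uptau_{B/A}$, which is weaker than uniform $F$-compatibility; and the claimed identification $\Hom_R(F^e_*R,R)\cong F^e_*\Hom_R(R,F^e_*R)$ acting on $\Hom_R(B,R)$ is not a correct statement (the right-hand side is just $F^{2e}_*R$) and does not yield the needed map. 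The fix is short and standard, but the key idea---precomposition with the Frobenius of $B$---is missing from your sketch; note also that the same device is what lets the paper conclude the trace is \emph{uniformly} $F$-compatible (all $e$ and all $\psi$ at once), not merely compatible with a fixed splitting as in the general principle you invoke from Lemma \ref{lem:F-compatible-properties}.
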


\begin{proof}
(1) The inclusion $\uptau_{B/A} \subseteq \sum \im(\varphi)$
follows by the definition of trace. Suppose $a \in \im(\varphi)$. Choose $b \in B$
such that $\varphi(b) = a$. Then $a$ is the image of $1 \in B$
under the composition
\[
B \xrightarrow{b\cdot} B \xrightarrow{\varphi} A,
\]
and so, $a \in \uptau_{B/A}$. Thus, $\im(\varphi) \subseteq
\uptau_{B/A}$ for all $\varphi \in \Hom_A(B,A)$, so we get
the other containment.

In the generality stated (i.e. without assuming $A$ is noetherian
or $B$ is module finite over $A$), (2) is proved in 
\cite[Prop.\ 8.5(i)]{DMS20}. We reproduce the proof for the
reader's convenience since the argument is straightforward and the
result is crucial for this paper. We want that if 
$\phi: F^e_*A \rightarrow A$ is an $A$-linear map, then
$\phi(F^e_*\uptau_{B/A}) \subseteq \uptau_{B/A}$. So let 
$F^e_*b \in F^e_*\uptau_{B/A}$. Choose an $A$-linear map
\[
\eta_b: B \rightarrow A
\]
such that $\eta_b(1) = b$. Then the composition
\[
B \xrightarrow{F^e} F^e_*B \xrightarrow{F^e_*\eta_b}F^e_*A \xrightarrow{\phi} A
\]
is an $A$-linear map that sends $1 \mapsto \phi(F^e_*b)$. Thus, 
$\phi(F^e_*b) \in \uptau_{B/A}$,
and so, $\phi(F^e_*\uptau_{B/A}) \subseteq \uptau_{B/A}$.


For (3), the hypothesis that $B$ is finitely presented as
an $A$ module implies that 
$$A_\mathfrak{p} \otimes_A \Hom_A(B,A) = 
\Hom_{A_\mathfrak{p}}(B_\mathfrak{p}, A_\mathfrak{p}).$$ 
Applying $A_{\mathfrak p} \otimes_A -$ to $\Hom_A(B,A) \xrightarrow{
\textrm{eval $\MVAt 1$}} A$ gives
\[
\Hom_{A_{\mathfrak p}}(B_{\mathfrak p}, A_{\mathfrak p}) \xrightarrow{
\textrm{eval $\MVAt 1$}} A_{\mathfrak p}.
\]
Since localization commutes with taking images of linear maps,
we get the desired result.

(4) is a generalization of (3). We have an exact sequence
\[
\Hom_A(B, A) \xrightarrow{\eval \MVAt 1} A \rightarrow A/\uptau_{B/A} \rightarrow 0.
\]
Applying $C \otimes_A -$ to the above sequence and using the fact that $\Hom$ commutes
with flat base change when the first argument is finitely presented, we get 
the exact sequence
\[
\Hom_C(B \otimes_A C, C) \xrightarrow{\eval \MVAt 1} C \rightarrow C/\uptau_{B/A}C \rightarrow 0.
\]
Then $\uptau_{B \otimes_A C/C} = \im(\Hom_C(B \otimes_A C, C) \xrightarrow{\eval \MVAt 1} C)
 = \ker(C \rightarrow C/\uptau_{B/A}C) = \uptau_{B/A}C$.
\end{proof}

We now deduce some non-obvious consequences of the 
previous results.

\begin{corollary}
\label{cor:trace-F-split}
Let $R$ be a ring of prime characteristic $p > 0$ and $S$ be an
$R$-algebra.
\begin{enumerate}
	\item If $R$ is Frobenius split, then $\uptau_{S/R}$ is a 
	radical ideal.
	\item If $(R, \mathfrak m)$ is a complete local noetherian
	domain and $S$ is a big Cohen--Macaulay $R$-algebra (for 
	example, $S = R^+$), then
	$\uptau_{S/R}$ is a nonzero uniformly $F$-compatible ideal.
\end{enumerate}
\end{corollary}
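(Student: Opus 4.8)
The plan is to obtain both statements as short consequences of Lemmas \ref{lem:trace-ideals} and \ref{lem:F-compatible-properties}, separating the uniform $F$-compatibility of $\uptau_{S/R}$ from the question of its nonvanishing. For part (1), I would first note that a Frobenius split ring has prime characteristic $p > 0$, so Lemma \ref{lem:trace-ideals}(2) applies and shows that $\uptau_{S/R}$ is uniformly $F$-compatible for the arbitrary $R$-algebra $S$. Since $R$ is Frobenius split, Lemma \ref{lem:F-compatible-properties}(5) then says that every uniformly $F$-compatible ideal of $R$ is radical, so $\uptau_{S/R}$ is radical and part (1) is done.

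For part (2), the uniform $F$-compatibility of $\uptau_{S/R}$ is again immediate from Lemma \ref{lem:trace-ideals}(2), since $(R,\mathfrak m)$ has prime characteristic $p > 0$. The substance of the statement is the nonvanishing $\uptau_{S/R} \neq 0$, which by Definition \ref{def:trace-ideals} and the remarks following it is precisely the assertion that the big Cohen--Macaulay algebra $S$ is solid over $R$, i.e. $\Hom_R(S,R) \neq 0$. I would deduce this from Hochster's solidity criterion for complete local domains \cite{Hoc94}: if $(R,\mathfrak m)$ is a complete local domain of Krull dimension $d$ and $M$ is an $R$-module with $H^d_{\mathfrak m}(M) \neq 0$, then $M$ is a solid $R$-module. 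To apply it, it suffices to observe that $H^d_{\mathfrak m}(S) \neq 0$: since $S$ is big Cohen--Macaulay, some system of parameters $x_1,\dots,x_d$ of $R$ is a regular sequence on $S$, and computing $H^d_{\mathfrak m}(S)$ from the \v{C}ech complex on $x_1,\dots,x_d$ exhibits $S/(x_1,\dots,x_d)S \neq 0$ inside it. Hence $S$ is solid and $\uptau_{S/R} \neq 0$. For the parenthetical case $S = R^+$, one additionally invokes the Hochster--Huneke theorem that the absolute integral closure of an excellent local domain of prime characteristic is a big Cohen--Macaulay algebra, which applies because complete local rings are excellent.

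The only part requiring genuine care, and the step I expect to be the main obstacle, is the implication ``big Cohen--Macaulay $\Rightarrow$ solid'': one must make sure the completeness of $R$ is genuinely in force (Hochster's criterion fails without it) and that the big Cohen--Macaulay hypothesis is being used correctly to force the top local cohomology to be nonzero, rather than merely making the lower local cohomology vanish. Everything else is a direct citation of the lemmas above.
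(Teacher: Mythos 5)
Your proposal is correct and follows essentially the same route as the paper: part (1) via Lemma \ref{lem:trace-ideals}(2) together with Lemma \ref{lem:F-compatible-properties}(5), and part (2) by combining the uniform $F$-compatibility of traces with the nonvanishing of $H^d_{\mathfrak m}(S)$ for a big Cohen--Macaulay algebra and Hochster's solidity criterion from \cite{Hoc94}. The only difference is that you spell out the \v{C}ech-complex argument for $H^d_{\mathfrak m}(S) \neq 0$, which the paper leaves to the defining property of big Cohen--Macaulay algebras.
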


\begin{proof}
(1) follows by Lemma \ref{lem:trace-ideals}(2) and Lemma 
\ref{lem:F-compatible-properties}(3).

For (2), by the defining property of a big Cohen--Macaulay
$R$-algebra, it follows that if $d = \dim(R)$, then
\[
H^d_{\mathfrak m}(S) \neq 0.
\]
Consequently, $\Hom_R(S,R) \neq 0$ by \cite[Cor.\ 2.4]{Hoc94}, or equivalently,
$\uptau_{S/R}$ is nonzero.
\end{proof}

\begin{example}
There are other important examples of uniformly $F$-compatible
ideals. 
\begin{enumerate}
\item Suppose $R$ is a reduced $F$-finite noetherian ring. The 
\emph{big} or \emph{non-finitistic} test ideal $\uptau_b(R)$ of $R$ is a
uniformly $F$-compatible ideal, which is also the smallest 
(with respect to inclusion) 
uniformly 
$F$-compatible ideal of $R$ that is not contained
in any minimal prime \cite{Vas98, HT04, Sch10}. 
Blickle, Schwede and Tucker
have shown that when $R$ is additionally a normal, $F$-finite and 
$\mathbf{Q}$-Gorenstein domain, 
then $\uptau_b(R)$ can be realized as the
trace  of some finite and generically \'etale extension $S$ of
$R$ \cite[Thm.\ 4.6]{BST15}. In a similar vein, Polstra and 
Schwede have proved that in the $\mathbf{Q}$-Gorenstein
setting, one can often realize any uniformly 
$F$-compatible ideal of $R$ as a trace of
some finite extension of $R$ \cite{PS20}. 
However, 
it is not known if $\uptau_b(R)$ can be realized as trace
of a solid $R$-algebra $S$ when $R$ is not $\mathbf{Q}$-Gorenstein.
We will show in Corollary \ref{cor:test-ideal-trace} 
that the finitistic test ideal is recoverable
as a trace of a big Cohen-Macaulay algebra for 
an arbitrary complete local
domain of prime characteristic.

{\vspace{2mm}}

\item 
Suppose $\mathcal{C}$ is a 
collection of ideals of a noetherian ring $R$ of prime characteristic $p> 0$
that is closed under Frobenius powers, that is, $I \in \mathcal{C}
\Rightarrow I^{[p^e]} \in \mathcal{C}$ for all 
$e \in \mathbf{Z}_{> 0}$. 
We claim that
\[
\mathcal{I} \coloneqq \bigcap_{I \in \mathcal{C}} (I:I^*)
\]
is uniformly $F$-compatible. Here $I^*$ denotes the tight closure
of $I$. Suppose $c \in \mathcal{I}$ and 
$\varphi \colon F^e_*R \rightarrow R$ is an $R$-linear map. We
have to show that $\varphi(F^e_*c) \in \mathcal{I}$, that is, for 
$I \in \mathcal{C}$ and $z \in I^*$, we want $\varphi(F^e_*c)z \in 
I$. Now $z \in I^*$ implies $z^{p^e} \in (I^{[p^e]})^*$. 
This follows by choosing $d \in R^{\circ}$ such that 
$d(z^{p^e})^{p^f} = dz^{p^{e+f}} \in I^{[p^{e+f}]}
= (I^{[p^e]})^{[p^f]}$ for $f \gg 0$. Now $I \in \mathcal{C}
\Rightarrow I^{[p^e]} \in \mathcal{C}$. So by the choice of
$c$, we have $cz^{p^e} \in c(I^{[p^e]})^* \subseteq I^{[p^e]}$. 
Then
$\varphi(F^e_*c)z = \varphi(F^e_*cz^{p^e}) \in \varphi(F^e_*I^{[p^e]})
\subseteq I$, as desired. If $R$ is an approximately Gorenstein
ring (for example, an excellent reduced ring or a normal ring),
then taking $\mathcal C$ to be the collection of all ideals of $R$, 
the ideal $\mathcal{I}$ is the \emph{finitistic test ideal} of 
$R$ \cite[Prop.\ (8.15)]{HH90}. 
Similarly, taking $\mathcal{C}$ to be the 
collection of parameter ideals of $R$, the ideal $\mathcal I$ 
is the 
\emph{parameter test ideal} of $R$ \cite[Def.\ 4.3]{Smi95}. 
Thus, both types of test 
ideals are uniformly $F$-compatible for nice rings. 
\end{enumerate}
\end{example}

The trace $\uptau_{B/A}$ detects whether $A \rightarrow B$
splits. However, splitting is not a good notion for maps
$A \rightarrow B$ without finiteness assumptions on $B$. 
The better notion then is that of purity, and 
we now introduce a uniformly $F$-compatible that
detects purity of $A \rightarrow B$ in most cases of interest. 

\begin{definition}
\label{def:ideal-trace}
Let $A$ be a ring and $B$ be an $A$-algebra. Then the \emph{ideal trace} of $B/A$,
denoted $T_{B/A}$, is 
\[
T_{B/A} \coloneqq \bigcap_{I} (I: IB\cap A),
\]
where the intersection ranges over all ideal $I$ of $A$.
\end{definition}

Ideal traces satisfy the following elementary properties.

\begin{lemma}
\label{lem:contraction-ideals}
Let $A$ be a ring and $B$ be an $A$-algebra. 
Suppose $\mathcal C$
is the set of ideals of $A$. 
\begin{enumerate}
	\item $T_{B/A} = A$ if and only if $A \rightarrow B$ is
	cyclically pure.
	\item If $A$ is an approximately Gorenstein ring,
	then $T_{B/A} = A$ if and only if $A \rightarrow B$ is pure.
	\item We have $\uptau_{B/A} \subseteq T_{B/A}$. 
	\item Suppose $A$ has prime characteristic $p > 0$. Then for 
	a fixed ideal $I$ of $A$,
	\[
	T_I \coloneqq \bigcap_{e \geq 0} (I^{[p^e]}: I^{[p^e]}B\cap A)
	\]
	is uniformly $F$-compatible.
	\item Suppose $A$ has prime characteristic $p > 0$. Then
	$T_{B/A}$ is uniformly $F$-compatible.
\end{enumerate}
\end{lemma}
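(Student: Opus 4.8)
The strategy is to deduce parts (4) and (5) from the corresponding statements for trace ideals (Lemma \ref{lem:trace-ideals}(2)) together with the characterization of uniform $F$-compatibility as closure under all $\varphi \colon F^e_*A \to A$, mimicking the argument already recorded for traces and for the tight-closure construction $\bigcap_{I}(I:I^*)$ in the Example above. Concretely, for (4) I fix an ideal $I$ of $A$ and an $A$-linear map $\varphi \colon F^f_*A \to A$; I must show $\varphi(F^f_*T_I) \subseteq T_I$, i.e. that for every $c \in T_I$, every $e \geq 0$, and every $z \in I^{[p^e]}B \cap A$, one has $\varphi(F^f_*c)\,z \in I^{[p^e]}$. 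The key observation is the Frobenius-power bookkeeping: $z \in I^{[p^e]}B \cap A$ forces $z^{p^f} \in (I^{[p^e]})^{[p^f]}B \cap A = I^{[p^{e+f}]}B \cap A$ (the containment $(I^{[p^e]}B)^{[p^f]} \subseteq I^{[p^{e+f}]}B$ is immediate since $B$ is a commutative ring and Frobenius powers are compatible with ring maps), and therefore $c\,z^{p^f} \in (I^{[p^{e+f}]}: I^{[p^{e+f}]}B \cap A)\cdot(I^{[p^{e+f}]}B\cap A) \subseteq I^{[p^{e+f}]}$ because $c \in T_I$ means $c$ lies in the $(e+f)$-th term of the intersection defining $T_I$.

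Once $c\,z^{p^f} \in I^{[p^{e+f}]} = (I^{[p^e]})^{[p^f]}$, I finish exactly as in the trace argument: $\varphi(F^f_*c)\,z = \varphi(F^f_* (c\,z^{p^f})) \in \varphi\bigl(F^f_*(I^{[p^e]})^{[p^f]}\bigr) = \varphi\bigl(I^{[p^e]}\cdot F^f_*A\bigr) \subseteq I^{[p^e]}$, using $A$-linearity of $\varphi$ and the identity $F^f_*J^{[p^f]} = J\cdot F^f_*A$ recorded at the end of Subsection 2.3. Since this holds for all such $z$ and all $e$, we get $\varphi(F^f_*c) \in \bigcap_{e \geq 0}(I^{[p^e]}: I^{[p^e]}B \cap A) = T_I$, and as $\varphi$ and $f$ were arbitrary, $T_I$ is uniformly $F$-compatible.

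For (5), I want $T_{B/A} = \bigcap_I (I: IB \cap A)$ to be uniformly $F$-compatible. The cleanest route is to relate $T_{B/A}$ to the ideals $T_I$ of part (4): since $\{I^{[p^e]} : e \geq 0\} \subseteq \{I : I \text{ an ideal of } A\}$, we have $T_{B/A} \subseteq T_I$ for each $I$; conversely every ideal $I$ occurs as the $e=0$ term, so $T_{B/A} = \bigcap_I T_I$, exhibiting $T_{B/A}$ as an arbitrary intersection of uniformly $F$-compatible ideals. By Lemma \ref{lem:F-compatible-properties}(1) (closure of $\Sigma$ under arbitrary intersections), $T_{B/A}$ is uniformly $F$-compatible. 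Alternatively one can run the direct $\varphi$-argument again, noting that for $c \in T_{B/A}$ and $z \in IB \cap A$ the element $c\,z^{p^f}$ lands in $I^{[p^f]}$ by applying the defining property of $T_{B/A}$ to the ideal $I^{[p^f]}$ and the membership $z^{p^f} \in I^{[p^f]}B \cap A$, then concluding as before.

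The only place requiring any care — and thus the main (minor) obstacle — is the Frobenius-power manipulation $z \in IB \cap A \Rightarrow z^{p^f} \in I^{[p^f]}B \cap A$ and the compatibility $(I^{[p^e]})^{[p^f]} = I^{[p^{e+f}]}$; both are formal but must be stated correctly, and one must be careful that the Frobenius on $B$ is the one making these expansions behave, i.e. use $B$ a commutative ring of characteristic $p$ so that $r \mapsto r^{p^f}$ is a ring homomorphism and $(IB)^{[p^f]} = I^{[p^f]}B$. Everything else is a direct transcription of the trace computation in Lemma \ref{lem:trace-ideals}(2).
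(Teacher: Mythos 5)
Your treatment of parts (4) and (5) is correct and is essentially the paper's own argument: for (4) you raise $z \in I^{[p^e]}B\cap A$ to the $p^f$-th power to land in $I^{[p^{e+f}]}B\cap A$, use $c \in T_I$ (applied to the $(e+f)$-th term of the defining intersection) to get $c\,z^{p^f}\in I^{[p^{e+f}]}$, and then push through $\varphi$ via $\varphi(F^f_*c)z=\varphi(F^f_*(cz^{p^f}))\in\varphi(I^{[p^e]}F^f_*A)\subseteq I^{[p^e]}$; for (5) you observe $T_{B/A}=\bigcap_I T_I$ (each $I$ appears as the $e=0$ term, and each $I^{[p^e]}$ is itself an ideal of $A$) and invoke closure of uniform $F$-compatibility under arbitrary intersections, which is Lemma \ref{lem:F-compatible-properties}(1). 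This is exactly how the paper proceeds, up to a harmless swap of the roles of the letters $e$ and $f$.

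However, the lemma has five parts and your proposal argues only (4) and (5); parts (1)--(3) are never addressed, so as written the proof is incomplete. They are short but should be supplied. For (1): $A/I\to B/IB$ is injective if and only if $I=IB\cap A$, equivalently $(I:IB\cap A)=A$, so $T_{B/A}=A$ holds precisely when this is true for every ideal $I$, i.e.\ precisely when $A\to B$ is cyclically pure. Part (2) then follows from (1) together with Hochster's criterion (Proposition \ref{prop:Hochster-approx-Gorenstein}) that cyclically pure maps out of approximately Gorenstein rings are pure. For (3), given $c\in\uptau_{B/A}$ choose $\varphi\in\Hom_A(B,A)$ with $\varphi(1)=c$ and compute $c(IB\cap A)=\varphi(IB\cap A)\subseteq\varphi(IB)\subseteq I$ by $A$-linearity, so $c\in(I:IB\cap A)$ for every ideal $I$, giving $\uptau_{B/A}\subseteq T_{B/A}$. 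With these three easy additions your argument coincides with the paper's proof.
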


\begin{proof}
(1) follows from the fact that for an ideal $I$ of $A$,
$A/I \rightarrow B/IB$ is injective if and only if 
$I = IB \cap A$, or equivalently, that $(I: IB \cap A) = A$.

(2) follows from (1) and Proposition \ref{prop:Hochster-approx-Gorenstein} due to Hochster.

(3) Let $c \in \uptau_{B/A}$, and choose $\varphi \in \Hom_A(B,A)$
such that $\varphi(1) = c$. Then for any ideal $I$ of $A$, 
\[
c(IB\cap A) = \varphi(1)(IB \cap A) = \varphi(IB \cap A) 
\subseteq \varphi(IB) \subseteq I,
\] 
where the second equality and the last
containment of sets follow
by $A$-linearity of $\varphi$.

(4) Suppose $c \in T_I$ and
$\varphi: F^e_*A \rightarrow A$ is an $A$-linear map. For 
$f \geq 0$, let 
$z \in I^{[p^f]}B \cap A$. Then $z^{p^e} \in I^{[p^{e+f}]}B\cap A$,
and so, 
\[
\varphi(F^e_*c)z = \varphi(F^e_*cz^{p^e}) \in 
\varphi(F^e_*I^{[p^{e+f}]}) \subseteq I^{[p^f]}.
\]
Thus, $\varphi(F^e_*c)(I^{[p^f]}B\cap A) \subseteq I^{[p^f]}$ for
all $f \geq 0$, which shows $\varphi(F^e_*c) \in T_I$, 
and hence, the uniform $F$-compatibility of $T_I$.

(5) It is clear that 
\[
T_{B/A} = \bigcap_{I \in \mathcal{C}} T_I,
\]
and since an arbitrary intersection of uniformly $F$-compatible 
ideals is uniformly $F$-compatible
(Lemma \ref{lem:F-compatible-properties}), 
by (4) we conclude that
$T_{B/A}$ is uniformly $F$-compatible.
\end{proof}

\begin{remark}
	In general, the containment 
	$\uptau_{B/A} \subseteq T_{B/A}$ is strict even for nice rings
	$A$. For example, the first author and Murayama have recently
	constructed examples of excellent Henselian regular local rings
	$A$ of Krull dimension $1$ and prime characteristic $p > 0$ 
	that admit no nonzero $A$-linear maps $F_*A \rightarrow A$ \cite{DM20}. 
	For such a ring, $\uptau_{F_*A/A} = 0$ . 
	However, the Frobenius
	$F: A \rightarrow F_*A$ is faithfully flat \cite{Kun69}, 
	hence is pure,
	and hence is also cyclically pure. Therefore $T_{F_*A/A} = A$.
	This example is extreme in the sense that $F_*A$ is not a solid $A$-algebra. Thus one can ask
	the following question: suppose $B$ is a solid $A$-algebra
	and $T_{B/A} = A$. Then does it follow that $\uptau_{B/A} = A$
	when $A$ is approximately Gorenstein?
	The question has an affirmative answer if 
	$A \rightarrow B$ is 
	finite or if $A$ is complete,
	because then $A \rightarrow B$ is pure, and hence split,
	by \cite[Cor.\ 5.2]{HR76} in the finite case and by a lemma
	due to Auslander in the complete case (see proof of Corollary
	\ref{cor:complete-local}). 

\end{remark}

\subsection{Algebra closures, traces and ideal traces}
\label{subsec:Algebra closures, traces and ideal traces}
Let $A$ be a noetherian
ring of arbitrary characteristic. Then for any $A$-algebra $B$,
P\'erez and R.G. define an associated closure operation 
$\cl_B$ that satisfies many of the properties of tight closure \cite[Def.\ 2.4]{PRG}. Namely,
for an arbitrary $A$-module $M$ and a submodule $N$ of $M$,
an element $m \in M$ is in $N^{\cl_B}_M$, the
\emph{$\cl_B$ closure of $N$ in $M$}, if 
\[
1 \otimes m \in \im(B \otimes_A N \rightarrow B \otimes_A M),
\]
where $B \otimes_A N \rightarrow B \otimes_A M$ is obtained by tensoring $N \subseteq M$
by $\id_B$.

For example, if $M = A$ and $N = I$ is an ideal, then 
\[
I^{\cl_B}_A = IB \cap A.
\]
If $B = A^+$, then $\cl_{A^+}$ is commonly known as the
\emph{plus closure}.

Analogous to tight closure, 
one defines the \emph{big test ideal} of $\cl_B$, denoted $\uptau_{\cl_B}(A)$, as
\[
\uptau_{\cl_B}(A) = \bigcap_{N \subseteq M} (N:_A N^{\cl_B}_M),
\]
where the intersection ranges over all $A$-modules $M$ and submodules $N$ of $M$.
Similarly, the \emph{finitistic test ideal} of $\cl_B$, denoted $\uptau^{\fg}_{\cl_B}(A)$,
is defined as
\[
\uptau^{\fg}_{\cl_B}(A) = \bigcap_{\textrm{$M$\ is\ fin.\ gen.}} (N:_A N^{\cl_B}_M).
\]
Now the intersection runs over all finitely generated $R$-modules $M$ and submodules $N$.

Despite the many parallels between tight closure and $\cl_B$, 
these latter closure operations are better behaved. In particular, our next
result answers \cite[Question 3.7]{PRG} in the affirmative for 
closure operations that arise from $A$-algebras (see also
Remark \ref{rem:closure-modules}(2) for a partial result
for closure operations arising from $A$-modules).

\begin{proposition}
\label{prop:algebra-closure}
Let $A$ be a noetherian ring and $B$ be an $A$-algebra. Let $M$
be an $A$-module and $N$ be a submodule of $M$. Then we have the following:
\begin{enumerate}
    \item If $M'$ is a submodule of $M$, then 
    $(N \cap M')^{\cl_B}_{M'} \subseteq N^{\cl_B}_M$.
    
    \item Let $\{M_i\}_i$ be the collection of finitely generated $A$-submodules
    of $M$. Then $\{(N \cap M_i)^{\cl_B}_{M_i}\}_i$ is a filtered poset of 
    submodules of $M$ under inclusion and
    \[
    N^{\cl_B}_M = \bigcup_i (N \cap M_i)^{\cl_B}_{M_i}.
    \]
    
    \item $\uptau_{\cl_B}(A) = \uptau^{\fg}_{\cl_B}(A)$, that is, the big and finitistic
    test ideals of $\cl_B$ coincide.
    
    \item If $A$ is approximately Gorenstein, 
    then $\uptau_{\cl_B}(A) = T_{B/A}$.
    
    \item If $\mathcal C$ is the collection of ideals of $A$ primary 
    to maximal ideals, then $T_{B/A} = \bigcap_{I \in \mathcal C}(I: IB \cap A)$.
    
    \item $\uptau_{B/A} \subseteq \uptau_{\cl_B}(A)$.
\end{enumerate}
\end{proposition}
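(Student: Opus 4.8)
The plan is to prove the six assertions essentially in order, the engine throughout being the right-exactness of $B \otimes_A -$. For \emph{(1)}, I would apply $B \otimes_A -$ to the inclusions $N \cap M' \hookrightarrow N$ and $M' \hookrightarrow M$ and chase the resulting commutative square: if $m \in M'$ and $1 \otimes m$ lies in the image of $B \otimes_A (N \cap M') \to B \otimes_A M'$, then pushing forward along $B \otimes_A M' \to B \otimes_A M$ (which need not be injective) and using commutativity shows $1 \otimes m$ lies in the image of $B \otimes_A N \to B \otimes_A M$, i.e.\ $m \in N^{\cl_B}_M$. For \emph{(2)}, directedness of $\{(N \cap M_i)^{\cl_B}_{M_i}\}_i$ is immediate from (1) together with the fact that the finitely generated submodules of $M$ form a directed set, and ``$\supseteq$'' is (1) again. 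For ``$\subseteq$'' I would use that $B \otimes_A -$ commutes with the filtered colimit $M = \colim_i M_i$: given $m \in N^{\cl_B}_M$, choose $\xi = \sum_j b_j \otimes n_j \in B \otimes_A N$ mapping to $1 \otimes m$ in $B \otimes_A M$; the $n_j$ and $m$ all lie in some finitely generated $M_i$, so $\xi$ is the image of $\xi_i := \sum_j b_j \otimes n_j \in B \otimes_A (N \cap M_i)$, and in $B \otimes_A M = \colim_j B \otimes_A M_j$ the images of $1 \otimes m$ and of $\xi_i$ agree, hence already agree in some $B \otimes_A M_j$ with $j \geq i$; there $1 \otimes m$ lies in the image of $B \otimes_A (N \cap M_j) \to B \otimes_A M_j$, so $m \in (N \cap M_j)^{\cl_B}_{M_j}$.

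Part \emph{(3)} is then formal: $\uptau_{\cl_B}(A) \subseteq \uptau^{\fg}_{\cl_B}(A)$ is trivial, and conversely, for $c \in \uptau^{\fg}_{\cl_B}(A)$, any $N \subseteq M$ and any $m \in N^{\cl_B}_M$, part (2) places $m$ in $(N \cap M_i)^{\cl_B}_{M_i}$ for a finitely generated $M_i$, whence $cm \in N \cap M_i \subseteq N$. For \emph{(5)}, ``$\subseteq$'' is clear; conversely, given $c \in \bigcap_{I \in \mathcal C}(I : IB \cap A)$, an arbitrary ideal $J$ and $x \in JB \cap A$, I would check $cx \in J + \mathfrak m^n$ for every maximal ideal $\mathfrak m$ and every $n$: if $J \not\subseteq \mathfrak m$ then $J + \mathfrak m^n = A$ and there is nothing to prove, while if $J \subseteq \mathfrak m$ then $I := J + \mathfrak m^n$ has radical $\mathfrak m$, hence is primary to a maximal ideal, and $x \in JB \cap A \subseteq IB \cap A$ gives $cx \in c(IB \cap A) \subseteq I = J + \mathfrak m^n$. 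The Krull intersection theorem, applied to $A/J$ over each localization $A_{\mathfrak m}$, then forces $cx$ to vanish in every $(A/J)_{\mathfrak m}$, so $cx \in J$; as $J$ was arbitrary, $c \in T_{B/A}$. Part \emph{(6)} I would prove directly, with no hypothesis on $A$: an element $c \in \uptau_{B/A}$ equals $\varphi(1)$ for some $\varphi \in \Hom_A(B, A)$, so given $m \in N^{\cl_B}_M$, writing $1 \otimes m = \sum_j b_j \otimes n_j$ in $B \otimes_A M$ with $n_j \in N$ and applying $\varphi \otimes \id_M \colon B \otimes_A M \to M$ gives $cm = \sum_j \varphi(b_j) n_j \in N$.

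The substantive step, and the one I expect to be the main obstacle, is \emph{(4)}. The inclusion $\uptau_{\cl_B}(A) \subseteq T_{B/A}$ comes from restricting the defining intersection to pairs $(I, A)$ and using $I^{\cl_B}_A = IB \cap A$. For $T_{B/A} \subseteq \uptau_{\cl_B}(A)$, by (3) it suffices to show that a fixed $c \in T_{B/A}$ satisfies $c \cdot N^{\cl_B}_M \subseteq N$ for finitely generated $M$; and via right-exactness of $B \otimes_A -$ the pair $(N, M)$ reduces to $(0, M/N)$, since $m \in N^{\cl_B}_M$ forces the image $\bar m$ of $m$ in $M/N$ to lie in $0^{\cl_B}_{M/N}$. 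So suppose $M$ is finitely generated, $m \in 0^{\cl_B}_M$, but $cm \neq 0$. By Zorn's lemma choose a submodule $M_0 \subseteq M$ maximal with respect to $cm \notin M_0$; then $M/M_0$ is finitely generated, its smallest nonzero submodule $A\overline{cm}$ is simple, say $A\overline{cm} \cong A/\mathfrak m$ for a maximal ideal $\mathfrak m$, and $A/\mathfrak m \hookrightarrow M/M_0$ is essential. Lemma~\ref{lem:approx-Gorenstein} now furnishes an $\mathfrak m$-primary ideal $I$ with $A/I$ Gorenstein and an embedding $M/M_0 \hookrightarrow A/I$. Since $\bar m \in 0^{\cl_B}_{M/M_0}$, part (1) transports this to $\bar m \in 0^{\cl_B}_{A/I} = (IB \cap A)/I$, and because $c \in T_{B/A} \subseteq (I : IB \cap A)$ one gets $c\bar m = 0$ in $A/I$, hence in $M/M_0$ by injectivity of the embedding, i.e.\ $cm \in M_0$ — contradicting the choice of $M_0$. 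The points I would be most careful about are the compatibility of $\cl_B$ with the successive quotients $M \twoheadrightarrow M/N \twoheadrightarrow M/M_0$ (each an instance of right-exactness of $\otimes$), the identification $0^{\cl_B}_{A/I} = (IB \cap A)/I$, and confirming via Lemma~\ref{lem:approx-Gorenstein} that the zero-dimensional Gorenstein quotients $A/I$ suffice to detect closure — which is exactly where the approximately Gorenstein hypothesis is used.
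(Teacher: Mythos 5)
Your proposal is correct and follows essentially the same route as the paper: the same diagram chase for (1), the filtered-colimit argument for (2) (done element-wise rather than by quoting exactness of filtered colimits), the same formal deductions for (3), (5), (6), and for (4) the same Hochster--Huneke-style argument via a submodule maximal with respect to excluding $cm$, the resulting essential extension of a simple module, and the embedding into an Artinian Gorenstein quotient supplied by Lemma \ref{lem:approx-Gorenstein}. The only differences are cosmetic: you verify directly (e.g.\ the reduction to $0^{\cl_B}_{M/N}$ and the identification $0^{\cl_B}_{A/I}=(IB\cap A)/I$) what the paper delegates to citations of P\'erez--R.G.
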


\begin{proof}
For ease of notation, for any $A$-module $M$ and a submodule $N$ of $M$,
we use $\xi_{N,M}$ to denote the canonical map 
$B \otimes_A N \rightarrow B \otimes_A M$ obtained by tensoring $N \subseteq M$
by $\id_B$. By definition, $m \in N^{\cl_B}_M$ if and only if 
$1 \otimes m \in \im(\xi_{N,M})$.

(1) We have a commutative diagram
\[
\begin{tikzcd}
B \otimes_A (N \cap M') \arrow{rr}{\xi_{N \cap M', M'}} \arrow{d}{\xi_{N \cap M', N}}
               && B \otimes_A M' \arrow{d}{\xi_{M', M}}\\
B \otimes_A N \arrow{rr}{\xi_{N,M}} && B \otimes_A M
\end{tikzcd}
\]
If $m' \in (N \cap M')^{\cl_B}_{M'}$, then $1 \otimes m' \in \im(\xi_{N \cap M', M'})$.
By the commutativity of the above diagram, it follows that $1 \otimes m' \in \im(\xi_{N,M})$.
Thus, $(N \cap M')^{\cl_B}_{M'} \subseteq N^{\cl_B}_M$.

(2) $\{M_i\}_i$ is a filtered poset of $R$-submodules of $M$ under inclusion such that
\[
\textrm{$M = \colim_i M_i$ \hspace{3mm} and \hspace{3mm} $N = \colim_i N \cap M_i$.}
\]
The colimits here are unions. By (1), we get
\[
\bigcup_i (N \cap M_i)^{\cl_B}_{M_i} \subseteq N^{\cl_B}_M.
\]
Since tensor products commute with filtered colimits, it
follows that
\[
\xi_{N, M} \colon B \otimes_A N \rightarrow B \otimes_A M = 
\colim_i\big{(}\xi_{N \cap M_i, M_i} \colon B \otimes_A (N \cap M_i) \rightarrow B \otimes_A M_i\big{)}.
\]
Consequently, by the exactness of filtered colimits 
in $\Mod_A$ \cite[\href{https://stacks.math.columbia.edu/tag/00DB}{Tag 00DB}]{stacks-project},
it follows that
\[
\im(\xi_{N,M}) = \colim_i \im(\xi_{N \cap M_i, M_i}).
\]
Thus, for $m \in M$, in order for $1 \otimes m$ to be in $\im(\xi_{N,M})$,
there must exist an index $i$ such that $m \in M_i$ and
$1 \otimes m \in \im(\xi_{N \cap M_i, M_i})$. Unravelling the definition
of $\cl_B$, this gives (2).

(3) The containment $\uptau_{\cl_B}(A) \subseteq \uptau_{\cl_B}^{\fg}(A)$
follows by the definitions of the big and finitistic test ideals of $\cl_B$.
Let $a \in \uptau_{\cl_B}^{\fg}(A)$, and $N \subseteq M$ be a pair of
$A$-modules. Given $m \in N^{\cl_B}_M$, by (2) there exists some
finitely generated submodule $M'$ of $M$ such that 
$m \in (N \cap M')^{\cl_B}_{M'}$. Since 
\[
a\big{(}(N \cap M')^{\cl_B}_{M'}\big{)} \subseteq N \cap M',
\]
it follows that $am \in N \cap M' \subseteq N$.
Thus,
\[
a(N^{\cl_B}_M) \subseteq N,
\]
and since $M$ and $N$ are arbitrary, we get $a \in \uptau_{\cl_B}(A)$. This shows that
$\uptau^{\fg}_{\cl_B}(A) \subseteq \uptau_{\cl_B}(A)$.

(4) Recall that $$T_{B/A} = \bigcap_I (I: IB \cap A) = \bigcap_I (I: I^{\cl_B}_A),$$ 
where $I$ ranges over all ideals
of $A$. By (3) it suffices to show that $\uptau^{\fg}_{\cl_B}(A) = T_{B/A}$. We will 
follow the proof of \cite[Prop.\ (8.15)]{HH90}, where the analogous fact is shown for
the finitistic tight closure test ideal. Let $c \in T_{B/A}$. By \cite[Lem.\ 3.3]{PRG},
it suffices to show that if $M$ is a finitely generated $A$-module, then 
\[
c \in (0:_A 0^{\cl_B}_M),
\]
that is, $c$ annihilates $0^{\cl_B}_M$. Assume for contradiction that 
this is not the case. Then there 
exists $m \in 0^{\cl_B}_M$ such that $cm \neq 0$. Since $M$ is a noetherian $A$-module,
let $N$ be a submodule of $M$ maximal with respect to the property that $cm \notin N$.
Then $m \in N^{\cl_B}_M$ and $cm \notin N$. Replacing $M$ by $M/N$, $N$ by $0$
and $m$ by its image in $M/N$, we may assume there exists 
$m \in 0^{\cl_B}_M$ such that $cm \neq 0$ and 
for all submodules $0 \subsetneq M' \subsetneq M$,
$cm \in M'$. Thus, $A(cm) \subset M$ is an essential extension, and moreover,
$A(cm)$ has to be a nontrivial simple $A$-module. 
This means that there exists a maximal ideal $\mathfrak{m}$
of $A$ such that $A(cm) \simeq A/\mathfrak{m}$. Since $A$
is approximately Gorenstein, $M$ embeds in $A/I$ for an $\mathfrak{m}$-primary
ideal $I$ by Lemma \ref{lem:approx-Gorenstein}. Then
\[
0^{\cl_B}_M \subseteq 0^{\cl_B}_{A/I}.
\]
Since $c \in T_{B/A} = (I: IB \cap A) = (I: I^{\cl_B}_A)$, \cite[Lem.\ 2.15]{PRG}
shows that $c$
annihilates $0^{\cl_B}_{A/I}$, and so,
$c$ also annihilates $0^{\cl_B}_M$. This is a contradiction.

(5) Recall $\mathcal{C}$ is the collection of ideals of $A$ primary to maximal ideals.
It suffices to show that 
\[
\bigcap_{I \in \mathcal C} (I:IB\cap A) \subseteq T_{B/A}.
\]
Let $J$ be an arbitrary ideal of $A$ and let 
$c \in \bigcap_{I \in \mathcal C} (I:IB\cap A)$. Then for any maximal ideal 
$\mathfrak m$ of $A$ and $n \in \mathbf{Z}_{> 0}$,
\[
c(JB \cap A) \subseteq c((J + \mathfrak{m}^n)B \cap A) \subseteq J + \mathfrak{m}^n.
\]
Thus,
\[
c(JB \cap A) \subseteq \bigcap_{n \in \mathbf{Z}_{> 0}}  
\bigcap_{\mathfrak m} J + \mathfrak{m}^n  = J,
\]
where the inner intersection runs over all maximal ideals of $A$. It follows
that $c \in T_{B/A}$.

(6) If $A$ is approximately Gorenstein, then (6) follows from
(4) and Lemma \ref{lem:contraction-ideals}(3) because 
$\uptau_{B/A} \subseteq T_{B/A}$. We will show that (6) holds for any noetherian ring $A$. Let $c \in \uptau_{B/A}$ and choose an $A$-linear map
$f \colon B \rightarrow A$
such that $f(1) = c$. Then for any $A$-module $N$, we get an 
$A$-linear map
\[
  \begin{tikzcd}[column sep=1.475em,row sep=0]
   \eta_N \colon B \otimes_A N \rar & N.\\
    b \otimes n \rar[mapsto] & f(b)n
  \end{tikzcd}
\]
Note that $\eta_N$ is
functorial in $N$, that is, for any $A$-linear map 
$\varphi \colon N \rightarrow M$, 
\[
\begin{tikzcd}
B \otimes_A N \arrow{rr}{\id_B \otimes \varphi} \arrow{d}{\eta_N}
               && B \otimes_A M \arrow{d}{\eta_M}\\
N \arrow{rr}{\varphi} && M.
\end{tikzcd}
\]
commutes. Applying this commutative diagram when $N$ is a submodule of an
$A$-module $M$ and $\varphi$ is the inclusion map, we see that
if $m \in N^{\cl_B}_M$, that is, if  
$1 \otimes m \in \im(B \otimes_A N \rightarrow B \otimes_A M)$,
then
\[
cm = f(1)m = \eta_M(1 \otimes m) 
\]
is in the image of $N \subseteq M$, that is, $cm \in N$. Thus,
$c \in (N:_A N^{\cl_B}_M)$, and since $N, M$ are arbitrary, we get
$c \in \uptau_{\cl_B}(A)$.
\end{proof}

\begin{remark}
\label{rem:closure-modules}
{\*}
\begin{enumerate}
\item Proposition \ref{prop:algebra-closure}(6) is also observed when
$A$ is local in \cite[Thm.\ 3.12]{PRG}
(see \cite[Rem.\ 3.13]{PRG}), where it is additionally
shown that equality holds in part (6) if $B$ is a finite $A$-algebra
or if $A$ is complete and $B$ is an arbitrary $A$-algebra. 
We will use \cite[Thm.\ 3.12]{PRG}
to show that the trace $\uptau_{B/A}$ and the ideal trace $T_{B/A}$
are equal for module finite extensions in most cases of interest
(Corollary \ref{cor:ideal-traces-approx-gor}(1)). This
basic observation will be useful in identifying and proving
properties of an ideal that detects
the non-splinter locus of a noetherian domain in Section \ref{sec:splinter-locus}.

\vspace{1mm}

\item One can define a closure operation $\cl_B$ for an $A$-module
$B$ that is not necessarily an $A$-algebra \cite[Def.\ 2.4]{PRG}. 
The proof of Proposition 2.3.1(3) can be modified to show that if $B$
is a finitely generated $A$-module, then the big and finitistic 
$\cl_B$-closure test ideals coincide. Indeed, if $B$
is generated as an $A$-module by $b_1, \dots, b_n$, then
$m \in N^{\cl_B}_M$ precisely when $b_i \otimes m 
\in \im (B \otimes_A N \to B \otimes_A M)$ for all $i$. But one
can always find a large finitely generated $A$-submodule $M'$
of $M$ such that $m \in M'$ and 
$b_i \otimes m \in \im (B \otimes_A (N \cap M') 
\to B \otimes_A M')$ for all $i$.
However, the closure operations associated with modules
that are not algebras are often not related to tight closure.
For example, there exist even
finitely generated Cohen--Macaulay modules $B$ over a non-regular
but weakly $F$-regular local ring for
which $\cl_B$ is non-trivial \cite[Rem.\ 2.23]{PRG}.


\vspace{1mm}

\item The arguments of Proposition 
\ref{prop:algebra-closure} will not work to show that the big and finitistic
test ideals from tight closure theory are equal because tight closure involves
checking algebra closure type relations for infinitely many $B$'s. However,
see Corollary \ref{cor:test-ideal-trace} for a deep characterization
of finitistic tight closure in terms of an algebra closure due to Hochster.
\end{enumerate}
\end{remark}

We can now exhibit relations between the ideals $\uptau_{B/A}$
and $T_{B/A}$ for nice rings. In particular, we will see that the
ideal trace $T_{B/A}$ often localizes for finite extensions $A \hookrightarrow B$,
a fact that is not obvious from its definition which involves an
infinite intersection of ideals.

\begin{corollary}
\label{cor:ideal-traces-approx-gor}
Let $A$ be an approximately Gorenstein noetherian ring and $B$ be an $A$-algebra. 
\begin{enumerate}
    \item If $B$ is a finite $A$-algebra, then $\uptau_{B/A} = \uptau_{\cl_B}(A) =
    \uptau^{\fg}_{\cl_B}(A) = T_{B/A}$.
    \item If $B$ is a finite $A$-algebra, then for all prime ideals $\mathfrak p$
    of $A$, $T_{B_{\mathfrak p}/A_{\mathfrak p}} = (T_{B/A})_{\mathfrak p}$.
    \item If $A$ is complete local, then $\uptau_{B/A} = \uptau_{\cl_B}(A) =
    \uptau^{\fg}_{\cl_B}(A) =  T_{B/A}$.
    \item If $A$ is a complete local domain,
    then $\uptau_{A^+/A} = \uptau_{\cl_{A^+}}(A) =
    \uptau^{\fg}_{\cl_{A^+}}(A) =  T_{A^+/A}$ and $\uptau_{A^+/A} \neq 0$.
\end{enumerate}
All the ideals in (1) and (3)
are nonzero precisely when $B$ is a solid $A$-algebra.
\end{corollary}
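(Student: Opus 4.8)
The plan is to reduce everything to the three equalities that Proposition \ref{prop:algebra-closure} already supplies together with one nontrivial input from \cite{PRG}. Parts (1) and (3) share an identical skeleton: Proposition \ref{prop:algebra-closure}(3) gives $\uptau_{\cl_B}(A) = \uptau^{\fg}_{\cl_B}(A)$ for any noetherian $A$, Proposition \ref{prop:algebra-closure}(4) gives $\uptau_{\cl_B}(A) = T_{B/A}$ since $A$ is approximately Gorenstein, and Proposition \ref{prop:algebra-closure}(6) (equivalently Lemma \ref{lem:contraction-ideals}(3)) gives $\uptau_{B/A} \subseteq \uptau_{\cl_B}(A)$. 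So in both parts the only content left is the reverse containment, namely that $\uptau_{B/A}$ already equals $\uptau_{\cl_B}(A)$. Once that is in hand, the final sentence of the statement is immediate: by (1) and (3) all four displayed ideals coincide with $\uptau_{B/A}$, and $\uptau_{B/A} \neq 0$ exactly when $B$ is a solid $A$-algebra, by the definition of solidity.

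For part (3) this missing equality $\uptau_{B/A} = \uptau_{\cl_B}(A)$ is precisely \cite[Thm.\ 3.12]{PRG}, recorded in Remark \ref{rem:closure-modules}(1): when $A$ is complete local, equality holds in Proposition \ref{prop:algebra-closure}(6) for an arbitrary $A$-algebra $B$. For part (1), \cite[Thm.\ 3.12]{PRG} still gives $\uptau_{B/A} = \uptau_{\cl_B}(A)$ when $A$ is local and $B$ is a finite $A$-algebra, and I would globalize by localizing at maximal ideals: since the ideals in question sit inside the noetherian ring $A$, it suffices to check $(T_{B/A})_{\mathfrak m} \subseteq (\uptau_{B/A})_{\mathfrak m}$ for every maximal ideal $\mathfrak m$. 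Here $(\uptau_{B/A})_{\mathfrak m} = \uptau_{B_{\mathfrak m}/A_{\mathfrak m}}$ by Lemma \ref{lem:trace-ideals}(3) (with $B$ finitely presented over the noetherian ring $A$), while localizing the defining intersection $T_{B/A} = \bigcap_I (I : IB\cap A)$ and using that each $IB\cap A$ is finitely generated gives $(T_{B/A})_{\mathfrak m} \subseteq T_{B_{\mathfrak m}/A_{\mathfrak m}}$; finally $A_{\mathfrak m}$ is approximately Gorenstein and $B_{\mathfrak m}$ is finite over it, so \cite[Thm.\ 3.12]{PRG} together with Proposition \ref{prop:algebra-closure}(4) yields $T_{B_{\mathfrak m}/A_{\mathfrak m}} = \uptau_{\cl_{B_{\mathfrak m}}}(A_{\mathfrak m}) = \uptau_{B_{\mathfrak m}/A_{\mathfrak m}} = (\uptau_{B/A})_{\mathfrak m}$, closing the chain.

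Part (2) then follows by combining part (1) with the same localization: $(T_{B/A})_{\mathfrak p} = (\uptau_{B/A})_{\mathfrak p} = \uptau_{B_{\mathfrak p}/A_{\mathfrak p}}$ by Lemma \ref{lem:trace-ideals}(3), and running the local argument of part (1) at the ring $A_{\mathfrak p}$ (approximately Gorenstein, with $B_{\mathfrak p}$ finite over it) identifies this with $T_{B_{\mathfrak p}/A_{\mathfrak p}}$. For part (4), a complete local domain equals its own completion and is therefore formally reduced, hence approximately Gorenstein, so part (3) applies with $B = A^+$ and gives all the equalities among the four test ideals; it remains only to note $\uptau_{A^+/A} \neq 0$, i.e.\ that $A^+$ is a solid $A$-algebra, which in prime characteristic is Corollary \ref{cor:trace-F-split}(2) since $A^+$ is then a big Cohen--Macaulay $A$-algebra, and in the remaining characteristics follows from the analogous solidity of $A^+$ over a complete local domain (the normalized field trace in equal characteristic zero after passing to the normalization, and big Cohen--Macaulayness in mixed characteristic). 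The only genuinely nontrivial ingredient in all of this is \cite[Thm.\ 3.12]{PRG}, which passes through Matlis/local duality in the complete setting (and a separate argument in the finite-module setting); everything else is orchestration of that result with Proposition \ref{prop:algebra-closure}. The one point requiring a little care is the passage to $A_{\mathfrak p}$ at a non-maximal prime in part (2), where one uses that localizations of approximately Gorenstein rings remain approximately Gorenstein in the generality at hand.
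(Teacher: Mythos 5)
Your argument for parts (1)--(3) is essentially the paper's own: the same three inputs from Proposition \ref{prop:algebra-closure} reduce everything to the single equality $\uptau_{B/A} = T_{B/A}$, which is settled in the local case by \cite[Thm.\ 3.12]{PRG} and globalized using the localization inequality $(T_{B/A})_{\mathfrak p} \subseteq T_{B_{\mathfrak p}/A_{\mathfrak p}}$ together with $(\uptau_{B/A})_{\mathfrak p} = \uptau_{B_{\mathfrak p}/A_{\mathfrak p}}$ from Lemma \ref{lem:trace-ideals}(3). The paper runs this chain at every prime at once, so part (2) drops out immediately, whereas you check maximal ideals first and then rerun the local argument at an arbitrary $\mathfrak p$ for (2); this is a cosmetic difference, and the delicate point you flag --- that $A_{\mathfrak p}$ must again be approximately Gorenstein at a non-maximal prime --- is used just as implicitly in the paper's proof of (1), so you are not worse off. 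The one genuine divergence is the solidity of $A^+$ in part (4). The paper gives a single characteristic-free argument: by Cohen's structure theorem choose a module-finite extension $R \hookrightarrow A$ with $R$ complete regular local, observe $A^+ = R^+$ and that $R \to A^+$ is pure by the direct summand theorem, hence split by Auslander's lemma since $R$ is complete, and then transfer solidity along the finite extension $R \subseteq A$ via \cite[Cor.\ 2.3]{Hoc94}. You instead argue characteristic by characteristic: Corollary \ref{cor:trace-F-split}(2) plus big Cohen--Macaulayness of $A^+$ in characteristic $p$, normalized field traces after passing to the normalization in equal characteristic zero, and big Cohen--Macaulayness of $A^+$ \cite{Bha20} in mixed characteristic --- where you should note that Corollary \ref{cor:trace-F-split}(2) is stated only in prime characteristic, so in mixed characteristic you must invoke the characteristic-free local cohomology criterion \cite[Cor.\ 2.4]{Hoc94} directly. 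Both routes are valid; the paper's is more uniform and needs only the direct summand theorem, while yours trades that for the (in mixed characteristic, substantially deeper) Cohen--Macaulayness of the absolute integral closure, plus a small case analysis you would need to write out in full.
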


\begin{proof}
(1) The equalities $\uptau_{\cl_B}(A) = \uptau^{\fg}_{\cl_B}(A) = T_{B/A}$ follow by
Proposition \ref{prop:algebra-closure}. Thus, it suffices to show that
\begin{equation}
\label{eq:trace-ideal-trace}
\uptau_{B/A} = T_{B/A}.
\end{equation}
This equality holds when $A$ is local and $B$ is a finite $A$-algebra 
by \cite[Thm.\ 3.12]{PRG}, where it is
shown that $\uptau_{B/A} = \uptau_{\cl_B}(A)$. 
Now note that
\begin{equation}
\label{eq:localization}
(T_{B/A})_{\mathfrak p} = \bigg{(} \bigcap_I (I: IB \cap A) \bigg{)}_{\mathfrak p} 
\subseteq \bigcap_I (I: IB \cap A)_{\mathfrak p} = 
\bigcap_{I} (IA_{\mathfrak p}: IB_{\mathfrak p} \cap A_{\mathfrak p}) = T_{B_\mathfrak{p}/A_{\mathfrak{p}}}.
\end{equation}
The first equality is by definition of $T_{B/A}$, and the inclusion is a well-known
property of localization. The second equality follows by flatness of localization.
Indeed, we have $(I:IB \cap A)_{\mathfrak p} = (IA_{\mathfrak p}: (IB \cap A)A_{\mathfrak p})$
by flatness and the fact that $IB \cap A$ is 
finitely generated, and 
\[
A/(IB \cap A) \hookrightarrow B/IB
\]
stays an inclusion upon localizing at $\mathfrak p$, which implies that $(IB \cap A)A_{\mathfrak p}
= IB_{\mathfrak p} \cap A_{\mathfrak p}$. The final equality follows by definition of
$T_{B_\mathfrak{p}/A_\mathfrak{p}}$ and the fact that all ideals of $A_{\mathfrak p}$ are expanded from $A$.

By the veracity of (\ref{eq:trace-ideal-trace}) in the local case, we have 
$T_{B_\mathfrak{p}/A_\mathfrak{p}}
= \uptau_{B_\mathfrak{p}/A_\mathfrak{p}}$, and consequently,
\[
(\uptau_{B/A})_{\mathfrak p} \subseteq (T_{B/A})_{\mathfrak p} \subseteq T_{B_\mathfrak{p}/A_\mathfrak{p}}
= \uptau_{B_\mathfrak{p}/A_\mathfrak{p}} = (\uptau_{B/A})_{\mathfrak p}.
\]
Here the first inclusion follows because $\uptau_{B/A} \subseteq T_{B/A}$ 
by Lemma \ref{lem:contraction-ideals}(3), the second inclusion
follows by (\ref{eq:localization}) and the last equality follows 
by Lemma \ref{lem:trace-ideals}(3). Thus,
for all prime ideals $\mathfrak p$ of $A$, $(\uptau_{B/A})_{\mathfrak p} = (T_{B/A})_{\mathfrak p}$, which 
implies that $\uptau_{B/A} = T_{B/A}$.

(2) follows from (1) and the fact that $(\uptau_{B/A})_{\mathfrak p} = \uptau_{B_\mathfrak{p}/A_\mathfrak{p}}$ 
by Lemma \ref{lem:trace-ideals}(3).

(3) The equalities $\uptau_{\cl_B}(A) =
    \uptau^{\fg}_{\cl_B}(A) =  T_{B/A}$
again follow by Proposition \ref{prop:algebra-closure}, and 
\cite[Thm.\ 3.12]{PRG} shows that $\uptau_{B/A} = \uptau_{\cl_B}(A)$.

The first part of (4) follows by (3). It remains to show
that $A^+$ is a solid $A$-algebra.
Since $A$ is complete local, by Cohen's structure theorem, there exists
a complete regular local ring R and a module finite extension 
$R \hookrightarrow A$. Then $A^+ = R^+$ and the map 
$R \hookrightarrow A \hookrightarrow A^+$ is pure because $R$ is a splinter
by the direct summand theorem \cite{And18, Hoc73}. Since $R$ is complete,
a well-known result due to Auslander (see for example 
\cite[Lem.\ 2.3.3]{DMS20}) 
 implies $R \rightarrow A^+$ splits. Since $A$ is a finite extension
domain of $R$, $A^+$
must also be a solid $A$-algebra by \cite[Cor.\ 2.3]{Hoc94}.
\end{proof}

\begin{remark}
Corollary \ref{cor:ideal-traces-approx-gor}(4) was announced a number of years ago by
Hochster and Zhang. To the best of our knowledge, their result is
not publicly available and our proof is independent of theirs.
\end{remark}

Granting another unpublished but widely available result of Hochster in his
course notes \cite{Hoc07}, 
we show that the finitistic test ideal from
tight closure theory can always be recovered as a trace ideal for \emph{any}
complete local domain. This
result is an analog of
\cite[Thm.\ 4.6]{BST15}.

\begin{corollary}
\label{cor:test-ideal-trace}
Let $(R, \mathfrak m)$ be a complete local noetherian domain of prime
characteristic $p > 0$. Then there exists a big Cohen-Macaulay $R$-algebra
$B$ that satisfies all of the following properties:
\begin{enumerate}
    \item $B$ is an $R^+$-algebra.
    \item For all finitely generated $R$-modules $M$ and submodules $N$ of
    $M$, $N^{\cl_B}_M = N^*_M$, where $N^*_M$ denotes the tight closure of
    $N$ in $M$.
    \item $\uptau_{B/R} = \uptau^{\fg}_*(R)$, where $\uptau^{\fg}_*(R)$ is the finitistic
    tight closure test ideal of $R$.
\end{enumerate}
\end{corollary}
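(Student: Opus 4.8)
The plan is to reduce the whole statement to a theorem of Hochster from his lecture notes \cite{Hoc07}: for a complete local domain $(R,\mathfrak m)$ of prime characteristic $p>0$ there exists a big Cohen--Macaulay $R$-algebra $B$, which may be taken to be an $R^+$-algebra, such that $\cl_B$ captures tight closure on finitely generated modules, i.e.\ $N^{\cl_B}_M = N^*_M$ for every finitely generated $R$-module $M$ and every submodule $N \subseteq M$. Granting this as a black box, conditions (1) and (2) are precisely its content, so the entire remaining task is to establish (3).

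To prove (3), I would argue purely formally. By definition $\uptau^{\fg}_*(R) = \bigcap_{N \subseteq M}(N :_R N^*_M)$, where $M$ ranges over all finitely generated $R$-modules and $N$ over submodules of $M$. Substituting $N^*_M = N^{\cl_B}_M$ term by term using (2) identifies this intersection with the finitistic $\cl_B$-closure test ideal $\uptau^{\fg}_{\cl_B}(R)$. Since $R$ is a complete local domain it is reduced and excellent, hence approximately Gorenstein, so Corollary \ref{cor:ideal-traces-approx-gor}(3) applies to the $R$-algebra $B$ and gives
\[
\uptau_{B/R} = \uptau_{\cl_B}(R) = \uptau^{\fg}_{\cl_B}(R) = T_{B/R}.
\]
Combining the two identities yields $\uptau_{B/R} = \uptau^{\fg}_{\cl_B}(R) = \uptau^{\fg}_*(R)$, which is (3). (If one prefers not to route through Corollary \ref{cor:ideal-traces-approx-gor}, the same conclusion follows from Proposition \ref{prop:algebra-closure}(3), which gives $\uptau^{\fg}_{\cl_B}(R) = \uptau_{\cl_B}(R)$, together with \cite[Thm.\ 3.12]{PRG}, which gives $\uptau_{B/R} = \uptau_{\cl_B}(R)$ because $R$ is complete local.)

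The genuine content lies entirely in Hochster's theorem, which we invoke as a black box; the one point that requires care on our side is to extract from \cite{Hoc07} the statement in exactly the form above — namely that the big Cohen--Macaulay algebra can be realized inside $R^+$ and that it computes the $\cl_B$-closure of \emph{all} finitely generated pairs $N \subseteq M$, not merely the tight closure of ideals, since it is precisely this module-level assertion that matches the definition of the finitistic test ideal. Once that input is pinned down, every subsequent step is a formal consequence of the algebra-closure machinery of Proposition \ref{prop:algebra-closure} and Corollary \ref{cor:ideal-traces-approx-gor}, and I do not anticipate any further obstacle.
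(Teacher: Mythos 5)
Your proposal is correct and follows essentially the same route as the paper: both take Hochster's theorem from \cite{Hoc07} as the black box giving (1) and (2), and then deduce (3) formally from Proposition \ref{prop:algebra-closure}(3) together with \cite[Thm.\ 3.12]{PRG} (your primary route through Corollary \ref{cor:ideal-traces-approx-gor}(3) is just this same argument prepackaged, since $R$ complete local implies approximately Gorenstein).
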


\begin{proof}
The existence of a big Cohen--Macaulay $R$-algebra $B$ that satisfies
(1) and (2) follows by \cite[Thm.\ on\ pg.\ 250]{Hoc07}. This is a 
significant strengthening of \cite[Thm.\ (11.1)]{Hoc94}, where
Hochster shows that given a 
finitely generated $R$-module $M$ and a submodule $N$ of $M$, there exists
a big 
Cohen--Macaulay $R$-algebra $B$ depending on $M, N$ 
such that $N^{\cl_B}_M = N^*_M$.

So choose $B$ satisfying (1) and (2). Then $B$ is a solid $R$-algebra because
$R$ is complete and $B$ is big Cohen--Macaulay. Now by \cite[Thm.\ 3.12]{PRG},
\[
\uptau_{B/R} = \uptau_{\cl_B}(R).
\]
But Proposition \ref{prop:algebra-closure}(3) tells us that
\[
\uptau_{\cl_B}(R) = \uptau^{\fg}_{\cl_B}(R) \coloneqq 
\bigcap_{\textrm{$M$\ is\ fin.\ gen.}} (N:_R N^{\cl_B}_M) = 
\bigcap_{\textrm{$M$\ is\ fin.\ gen.}} (N:_R N^{*}_M).
\]
The last intersection is precisely the finitistic tight closure 
test ideal of $R$  \cite[Def.\ 8.22]{HH90}.
\end{proof}

\begin{remark}
Let $B$ be Hochster's (very large) big Cohen--Macaulay $R$-algebra from
Corollary \ref{cor:test-ideal-trace}. It is not known if $\cl_B$ coincides
with tight closure for submodules of arbitrary $R$-modules. Indeed, an
affirmative answer to this question would imply that $\uptau_{B/R}$ is also
the non-finitistic or big test ideal $\uptau_b(R)$ of $R$ from tight closure,
thereby showing that the big and finitistic tight closure test ideals coincide.
This would then settle the outstanding problem of whether weak $F$-regularity
is equivalent to strong $F$-regularity for complete local domains.
\end{remark}

\subsection{Finiteness of uniformly \texorpdfstring{$F$}{F}-compatible ideals} Let $R$ be a 
noetherian ring, and let $\Sigma$ be a collection of radical ideals of 
$R$ that is closed under finite intersections and such that for
$I \in \Sigma$, any minimal prime of $I$ is also in $\Sigma$.
The main example for us is where $\Sigma$ is the collection of
uniformly $F$-compatible ideals of $R$ when $R$ is Frobenius 
split; see Lemma \ref{lem:F-compatible-properties}. It follows that 
$\Sigma$ is finite precisely when 
$\Sigma \cap \Spec(R)$ is finite because every ideal of $\Sigma$
(apart from the unit ideal) is a finite intersection of
elements of $\Sigma \cap \Spec(R)$. We use this basic observation
to recall the following well-known result which is a key
technical ingredient of our paper.

\begin{proposition}
\label{prop:finiteness-F-compatible}
Let $R$ be a noetherian Frobenius split ring of prime 
characteristic $p > 0$. Then $R$ has finitely many uniformly
$F$-compatible ideals in each of the following cases:
\begin{enumerate}
	\item $R$ is $F$-finite.
	\item $(R,\mathfrak m)$ is local.
\end{enumerate}
In fact, if $\varphi \colon F_*R \rightarrow R$ is a Frobenius
splitting, then $R$ has finitely many $\varphi$-compatible 
ideals in both cases.
\end{proposition}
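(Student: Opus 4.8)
The plan is to prove both cases at once by reducing to a single $\varphi$-compatibility statement. Fix a Frobenius splitting $\varphi\colon F_*R \to R$. By Remark \ref{rem:compatible-fixed-map}, every $\varphi$-compatible ideal is radical (since $\overline{\varphi}$ splits $F$ on $R/I$ so $R/I$ is reduced), and the $\varphi$-compatible ideals are closed under finite intersection and contain every minimal prime of any member. So by the observation preceding the statement, it suffices to bound the number of \emph{prime} $\varphi$-compatible ideals. Since every uniformly $F$-compatible ideal is in particular $\varphi$-compatible, this simultaneously proves the ``in fact'' clause and the main assertion. Thus the whole proof reduces to: \emph{if $R$ is noetherian, $\varphi$ is a Frobenius splitting, and $R$ is either $F$-finite or local, then there are only finitely many prime $\varphi$-compatible ideals.}

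For case (1), $F$-finite: here the standard argument is to pass to the completion or to localize and complete at each prime, but the cleanest route is via the theory of Cartier modules / $F$-modules. Concretely, a $\varphi$-compatible prime $\mathfrak p$ gives, by Matlis duality at $\mathfrak p$, an $\mathcal F_E$-submodule of $E_{R_\mathfrak p}(\kappa(\mathfrak p))$ (the construction of Lyubeznik--Smith \cite{LS01}, as noted after Definition \ref{def:F-compatible}); but more elementarily one uses that for an $F$-finite ring the module $\Hom_R(F_*R,R)$ is finitely generated over $F_*R$, so a single map $\varphi$ together with multiplication by elements of $F_*R$ generates enough maps to rigidify the situation, and one invokes Schwede's finiteness theorem \cite{Sch09} directly — which is exactly ``an $F$-pure noetherian $F$-finite ring has finitely many uniformly $F$-compatible ideals.'' Since a Frobenius split ring is $F$-pure, \cite{Sch09} gives finitely many uniformly $F$-compatible ideals; for the sharper $\varphi$-compatible statement one reruns Schwede's argument (or cites \cite{Bli13, BB11}) verbatim with ``all maps $F^e_*R\to R$'' replaced by ``iterates built from the single splitting $\varphi$,'' the point being that the proof only ever uses one map at a time and the finiteness of $\Hom_R(F_*R,R)$ as an $F_*R$-module.

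For case (2), $(R,\mathfrak m)$ local: pass to the $\mathfrak m$-adic completion $\widehat R$. The splitting $\varphi$ extends to a splitting $\widehat\varphi$ of Frobenius on $\widehat R$, and (by e.g. the faithful flatness of completion together with the fact that $\varphi$-compatible ideals of $R$ are contracted from $\widehat\varphi$-compatible ideals of $\widehat R$, since $I = IB\cap A$-type conditions ascend and descend along faithfully flat maps) the assignment $\mathfrak q \mapsto \mathfrak q \cap R$ is a finite-to-one map from $\widehat\varphi$-compatible primes of $\widehat R$ onto the $\varphi$-compatible primes of $R$ (finite-to-one because $\widehat R/\mathfrak p\widehat R$ has finitely many minimal primes). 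So one reduces to $\widehat R$ complete local, hence excellent, hence — after killing nilpotents, which are $\varphi$-compatible — one has an excellent, in particular $F$-finite after a further reduction... no: complete local need not be $F$-finite. Instead, for complete local Frobenius split $R$ one uses the explicit Hilbert--Samuel multiplicity bound of \cite{HW15} as advertised in the introduction and in \cite{ST10}: a prime $\varphi$-compatible ideal $\mathfrak p$ of coheight $c$ gives a Frobenius split (hence $F$-pure) quotient $R/\mathfrak p$, and the multiplicity bound on $F$-pure local rings caps how many such primes can occur in each coheight, while $\dim R < \infty$ caps the number of coheights. This ``counting by multiplicity'' step is the heart of case (2).

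The main obstacle is the $F$-finiteness-free handling of the local case. Unlike the $F$-finite situation, one cannot directly quote \cite{Sch09}; the substitute is the combination of (a) reduction to complete local via completion, (b) the \cite{LS01}/\cite{HW15} duality-plus-multiplicity machinery, and (c) checking that all of this respects a \emph{fixed} $\varphi$ rather than the full set of maps $F^e_*R\to R$ — which it does, because every ingredient (Matlis duality, the multiplicity bound, passage to reduced quotients) is insensitive to whether one uses one splitting or all of $\Hom$. I would present case (2) by first completing, then citing Proposition \ref{prop:finiteness-F-compatible}'s own forthcoming proof-technique (\cite{HW15, ST10}) for the multiplicity bound, and finally recording that contraction along $R \to \widehat R$ is finite-to-one on the relevant primes.
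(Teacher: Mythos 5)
Your overall strategy is the paper's: reduce, via radicality and stability under intersections and minimal primes, to counting $\varphi$-compatible \emph{primes}; handle the $F$-finite case by Schwede's argument; handle the local case by the Huneke--Watanabe multiplicity bound. Case (1) as you present it is fine — part (1) really is \cite[Cor.\ 5.10]{Sch09}, and the fixed-$\varphi$ refinement does come from rerunning that argument, though the essential input is not the module-finiteness of $\Hom_R(F_*R,R)$ but the existence of a smallest \emph{nonzero} $\varphi$-compatible ideal in an $F$-finite domain (the test ideal of the pair, \cite[Thm.\ 3.8]{ST12}, resting on test elements). The genuine gap is in case (2): the counting mechanism is asserted but not supplied, and as phrased it would fail. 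Knowing $e(R/\mathfrak p)\le\binom{v}{d}$ for \emph{each} compatible prime $\mathfrak p$ of coheight $d$ separately caps nothing about how many such primes there are. What makes the count work is that the intersection $\mathfrak p_1\cap\dots\cap\mathfrak p_n$ of finitely many $\varphi$-compatible primes of coheight $d$ is again $\varphi$-compatible, so $R/\bigcap_i\mathfrak p_i$ is Frobenius split, reduced and equidimensional of dimension $d$, whence $n\le\sum_i e(R/\mathfrak p_i)=e\big(R/\bigcap_i\mathfrak p_i\big)\le\binom{v}{d}$ by the associativity/additivity formula \cite[Thm.\ 14.7]{Mat89} combined with Proposition \ref{prop:multiplicity-bound}. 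You point to \cite{HW15, ST10}, where this appears, but your own description (``the multiplicity bound caps how many such primes can occur in each coheight'') omits exactly this step, which is the heart of case (2).

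Separately, your reduction to the completion is unnecessary and under-justified. Proposition \ref{prop:multiplicity-bound} holds for an arbitrary $F$-pure noetherian local ring — no completeness, excellence or $F$-finiteness is needed — so the count can be run directly in $R$, which is what the paper does. If you do complete, the claims that $\varphi$ induces a splitting $\widehat\varphi$ of $\widehat R$ and that $I\widehat R$ is $\widehat\varphi$-compatible whenever $I$ is $\varphi$-compatible are true but require an argument: one identifies the $\mathfrak m$-adic completion of $F_*R$ with $F_*\widehat R$ (because the $\mathfrak m^{[p]}$-adic and $\mathfrak m$-adic topologies on $R$ agree) and then uses continuity and density of $R$ in $\widehat R$. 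Your stated justification via ``$I=IB\cap A$-type conditions ascend and descend along faithfully flat maps'' is not the relevant statement, since $\varphi$-compatibility is not a condition of that form; so, as written, the surjectivity of contraction from $\widehat\varphi$-compatible primes onto $\varphi$-compatible primes is unsupported, even though it is fixable.
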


We will reprove Proposition \ref{prop:finiteness-F-compatible}, in
part to highlight the similarity between the proofs of the global
and local cases, and also to give a proof of the 
global $F$-finite case that does not rely on $F$-adjunction or 
the language of divisor pairs.
But first, we highlight a bound on
the multiplicity of $F$-pure noetherian local
rings that gives a proof of the local case of
Proposition \ref{prop:finiteness-F-compatible}.

\begin{proposition}\cite[Thm.\ 3.1]{HW15}
\label{prop:multiplicity-bound}
Let $(R,\mathfrak m, \kappa)$ be a noetherian local ring of prime 
characteristic $p > 0$ that is $F$-pure. Suppose $d$ is the 
dimension of $R$ and $v = \dim_\kappa \mathfrak{m}/\mathfrak{m}^2$. 
Then
\[
e(R) \leq \binom{v}{d},
\]
where $e(R)$ is the Hilbert--Samuel multiplicity of $R$.
\end{proposition}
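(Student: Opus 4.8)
The plan is to follow the standard reduction-to-the-associated-graded argument, combining $F$-purity with a dimension count. First I would observe that the multiplicity $e(R)$ depends only on $\dim R$ and the Hilbert function associated to $\mathfrak m$, and that it is unchanged by passing to a faithfully flat local extension with regular fibers; in particular, I may complete and then further adjoin indeterminates to the residue field if needed, so that without loss of generality I can assume $R$ is complete with infinite (even perfect, if convenient) residue field $\kappa$. Next, since $R$ is $F$-pure, it is reduced, and one checks $e(R) = e(R/\mathfrak p)$ for a minimal prime $\mathfrak p$ with $\dim(R/\mathfrak p) = d$; because $F$-purity passes to such quotients (the composite $R \to R/\mathfrak p$ with a Frobenius-type splitting behaves well), I reduce to the case $R$ a complete local \emph{domain} of dimension $d$ that is $F$-pure, with embedding dimension at most $v$.

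The heart of the argument is then: choose a minimal reduction $J = (x_1, \dots, x_d)$ of $\mathfrak m$ (possible since $\kappa$ is infinite), so that $J$ is a parameter ideal with $e(R) = e(J; R) = \ell(R/J^{[1]})$-type length, and more precisely $e(R) = \lim_n \ell(R/J^n) d!/n^d$ but with $J$ a reduction one has $e(R) = \ell(R/J)$ when $R$ is Cohen--Macaulay and in general $e(R) \le \ell(R/J)$. Actually the cleaner route for an $F$-pure ring is to use that $F$-purity forces, for the Frobenius powers, $\ell(R/J^{[p^e]}) \ge $ (something controlling $e(R) p^{ed}$ from below with no correction term), i.e. $F$-purity kills the lower-order slack, giving $e(R) = \lim_e \ell(R/J^{[p^e]})/p^{ed}$ exactly rather than as an inequality. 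Combined with the elementary bound $\ell(R/J^{[p^e]}) \le \ell(R/\mathfrak m^{d(p^e - 1) + 1})$ coming from $J^{[p^e]} \supseteq \mathfrak m^{d(p^e-1)+1}$ (since each $x_i^{p^e} \in \mathfrak m^{p^e}$ and products of $d$ of the generators land in $\mathfrak m^{d}$ appropriately) — no wait, the sharp version uses that $\mathfrak m^{p^e} \subseteq J^{[p^e]} + \mathfrak m^{p^e + 1}$ won't quite do it — one instead bounds $\ell(R/J^{[p^e]})$ directly by $\binom{v}{d} p^{ed} + O(p^{e(d-1)})$ using that $R/J^{[p^e]}$ is generated over $\kappa$ by monomials in the $v$ generators of $\mathfrak m$ of degree $< d \cdot p^e$ that are not in $J^{[p^e]}$, a count that is asymptotically $\binom{v}{d} p^{ed}$. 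Dividing by $p^{ed}$ and letting $e \to \infty$ yields $e(R) \le \binom{v}{d}$.

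The main obstacle I anticipate is the precise input where $F$-purity is used, namely upgrading the asymptotic identity $e(R) = \lim_e \ell(R/J^{[p^e]})/p^{ed}$ from a limit with unknown error to something one can cleanly compare against the combinatorial upper bound; the subtlety is that without $F$-purity there is a correction term (related to the Hilbert--Kunz multiplicity versus the ordinary multiplicity), and it is exactly $F$-purity — or more precisely the structure of $F$-pure rings forcing $\mathfrak m^{[p^e]} :_R \mathfrak m$-type colon ideals to stay small — that makes the leading term behave like the ordinary multiplicity. I would cite the relevant colon-capturing / $F$-purity lemma (this is essentially the content of the Huneke--Watanabe argument) rather than reprove it, and present the monomial count and the limit passage in detail. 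The whole thing is short once the $F$-purity lemma is in hand; I expect the writeup to be well under a page.
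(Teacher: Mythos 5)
First, a point of context: the paper does not prove this proposition at all — it is quoted verbatim from Huneke--Watanabe \cite[Thm.\ 3.1]{HW15} and used as a black box (in the proof of Proposition \ref{prop:finiteness-F-compatible}). So there is no internal proof to compare against; your sketch has to stand on its own, and as written it does not.

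The central gap is the counting step. You propose to bound $\ell(R/J^{[p^e]})$ by $\binom{v}{d}p^{ed}+O(p^{e(d-1)})$ by counting ``monomials in the $v$ generators of $\mathfrak m$ of degree $< d\cdot p^e$ that are not in $J^{[p^e]}$.'' The number of monomials of degree $< dq$ in $v$ variables grows like $(dq)^v/v!$, i.e.\ of order $q^v$, not $q^d$; to cut this down to $\binom{v}{d}q^d$ you must know which monomials land in $J^{[q]}$, and that is exactly the hard content of the theorem. No mechanism for this is given, and none can be characteristic-free: for $R=k[[x,y]]/(x^3+y^3)$ one has $v=2$, $d=1$, yet $\ell(R/J^{[q]})\sim 3q>\binom{2}{1}q$, so any such count must use $F$-purity in an essential way inside the count itself. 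Your stated plan for where $F$-purity enters is misplaced: the identity $e(R)=\lim_e \ell(R/J^{[p^e]})/p^{ed}$ for a parameter ideal $J$ is Lech's limit formula and holds in every noetherian local ring, with no hypothesis on Frobenius; so the ``upgrade from inequality to equality'' you want to extract from $F$-purity is free, and the genuinely needed input (whatever produces the binomial coefficient) is left unidentified. Your fallback — to ``cite the relevant colon-capturing / $F$-purity lemma (this is essentially the content of the Huneke--Watanabe argument)'' — is circular for a blind proof: it amounts to citing the result being proved, which is precisely what the paper itself does by referring to \cite{HW15}.

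There is also a false reduction at the start: for a reduced local ring the associativity formula gives $e(R)=\sum_{\mathfrak p} e(R/\mathfrak p)$, the sum over all minimal primes with $\dim(R/\mathfrak p)=d$ (this is exactly how \cite[Thm.\ 14.7]{Mat89} is used later in the paper), so $e(R)=e(R/\mathfrak p)$ for a single minimal prime is wrong whenever there are several components of maximal dimension. This matters here: the paper applies the proposition to $R/\bigcap_i\mathfrak p_i$, a reduced equidimensional ring with many minimal primes, and knowing the bound only for domains would give $\sum_i\binom{v_i}{d}$, which is not dominated by $\binom{v}{d}$. Also, the auxiliary containment $\mathfrak m^{d(q-1)+1}\subseteq J^{[q]}$ you flirt with is false in general even for $F$-pure rings (e.g.\ $R=k[[x,y]]/(xy)$, $J=(x+y)$, where $x^q\notin(x^q+y^q)$), so the proof cannot be repaired along that particular line without a genuinely new idea.
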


\begin{proof}[Proof of Proposition \ref{prop:finiteness-F-compatible}]
Since $R$ is Frobenius split, all uniformly 
$F$-compatible ideals are radical.
Part (1) is precisely \cite[Cor.\ 5.10]{Sch09}. For the 
reader's convenience, we translate Schwede's 
proof, avoiding the machinery of $F$-adjunction and pairs. 
Fix a Frobenius splitting
\[
\varphi: F_*R \rightarrow R.
\]
We will show the stronger statement that there are finitely many
$\varphi$-compatible ideals in the sense of Remark \ref{rem:compatible-fixed-map}. 
As discussed in the beginning of this subsection, it suffices to 
show that there are finitely many 
$\varphi$-compatible ideals that are prime. If not, there will be
infinitely many distinct $\varphi$-compatible prime ideals
$\{\mathfrak{p}_\alpha\}_{\alpha}$ such that $\dim(R/\mathfrak{p}_\alpha) = d$,
for some fixed $0 < d \leq \dim(R)$ ($\dim(R) < \infty$ because $R$ is 
$F$-finite \cite[Prop.\ 1.1]{Kun76}). Let
\[
\mathfrak{q} \coloneqq \bigcap_\alpha \mathfrak{p}_\alpha.
\]
One can verify that if $\mathfrak{p}$ is a minimal prime of $\mathfrak{q}$,
then $\mathfrak{p}$ is the intersection of the $\mathfrak{p}_\alpha$ such that
$\mathfrak{p} \subseteq \mathfrak{p}_\alpha$. Since $\mathfrak{q}$ has
finitely many minimal primes, by the pigeonhole principle, there exists
a minimal prime of $\mathfrak{q}$ that is the intersection of infinitely
many of the $\mathfrak{p}_\alpha$'s. Thus, replacing $\mathfrak{q}$ by this 
minimal prime, we may assume that $\mathfrak{q}$ is prime. Note that for
all $\alpha$, $\mathfrak{q} \subsetneq \mathfrak{p}_\alpha$ since there
are no inclusion relations among the $\mathfrak{p}_\alpha$. 
Moreover, $\mathfrak{q}$ is uniformly
$\varphi$-compatible, so we get an induced Frobenius splitting
\[
\overline{\varphi}: F_*(R/\mathfrak{q}) \rightarrow R/\mathfrak{q}
\]
of the domain $R/\mathfrak{q}$. For all $\alpha$, 
$\mathfrak{p}_\alpha/\mathfrak{q}$ is a nonzero $\overline{\varphi}$-compatible
ideal of $R/\mathfrak{q}$ and
\begin{equation}
\label{eq:intersection}
\bigcap_{\alpha}(\mathfrak{p}_\alpha/\mathfrak{q}) = 
(\bigcap_\alpha \mathfrak{p}_\alpha)/\mathfrak{q}= (0).
\end{equation}
Since $R/\mathfrak{q}$ is an $F$-finite domain and $\overline{\varphi}$ is a nonzero map, 
there is a smallest \emph{nonzero}
$\overline{\varphi}$-compatible ideal with respect to inclusion,
namely the (big or non-finitistic) test ideal $\uptau(R/\mathfrak{q}, \overline{\varphi})$ 
\cite[Thm.\ 3.8]{ST12} (this is where $F$-finiteness is
used seriously for the first time). In particular, 
\[
(0) \neq \uptau(R/\mathfrak{q}, \overline{\varphi}) \subseteq 
\mathfrak{p}_\alpha/\mathfrak{q},
\]
for all $\alpha$. This contradicts (\ref{eq:intersection}).

We prove (2) following \cite[Rem.\ 3.4]{HW15}. As above, for a Frobenius
splitting $\varphi$ of $(R, \mathfrak m)$, it suffices to
show there are only finitely many $\varphi$-compatible prime ideals
$\mathfrak{p}$ of coheight $d$, for any fixed $0 \leq d \leq \dim(R)$.
So suppose 
$\mathfrak{p}_1, \dots, \mathfrak{p}_n \in \Spec(R)$
are prime ideals of coheight $d$. Since $\mathfrak{p}_1 \cap \dots
\cap \mathfrak{p}_n$ is uniformly $F$-compatible, 
\[
R/\bigcap_{i = 1}^n \mathfrak{p}_i
\]
is a Frobenius split equidimensional local 
ring of Krull dimension $d$. Suppose $v$ (resp. $v'$) is the
embedding dimension of $R$ 
(resp. $R/\bigcap_{i = 1}^n \mathfrak{p}_i$). Then $v' \leq v$.
Since $R/\bigcap_{i = 1}^n \mathfrak{p}_i$ is reduced and 
equidimensional, we get
\[
n \leq \sum_{i = 1}^n e(R/\mathfrak{p}_i) = 
e(R/\bigcap_{i = 1}^n \mathfrak{p}_i) \leq \binom{v'}{d} \leq 
\binom{v}{d}.
\]
The first inequality follows because the 
multiplicity of a local domain is a positive integer, the 
equality follows by \cite[Thm.\ 14.7]{Mat89} because $R/\bigcap_{i=1}^n\mathfrak{p}_i$ is
reduced and equidimensional, the second
inequality follows by Proposition \ref{prop:multiplicity-bound}
applied to the $F$-pure local ring $R/\bigcap_{i = 1}^n \mathfrak{p}_i$,
and the final inequality follows because $v' \leq v$. Thus,
for each $0 \leq d \leq \dim(R)$, the number of $\varphi$-compatible
prime ideals of coheight $d$ is
bounded above by $\binom{v}{d}$, which only depends on $R$
and $d$.
\end{proof}

\begin{remark}
{\*}
\begin{enumerate}
\item In the proof of Proposition \ref{prop:finiteness-F-compatible} in the 
$F$-finite case, we used the highly non-trivial fact that
if $R$ is a noetherian $F$-finite domain and $\varphi \colon F_*R \rightarrow R$
is a nonzero $R$-linear map, then $R$ has a \emph{smallest nonzero} $\varphi$-compatible
ideal with respect to inclusion. The existence of this ideal is based on a deep
result of Hochster and Huneke in tight closure theory on the existence of 
completely stable test elements for noetherian $F$-finite rings. For more details,
please see \cite[Lem.\ 3.6]{ST12} and \cite[Thm.\ 5.10]{HH94}.

\vspace{1mm}

\item The finiteness of 
the set of uniformly $F$-compatible ideals of a Frobenius split
noetherian local ring $(R, \mathfrak m)$ in the excellent case also
follows by \cite[Cor.\ 3.2]{EH08}, which is a characteristic independent
result. The advantage of Proposition 
\ref{prop:multiplicity-bound} is that it allows one to obtain explicit bounds
on the number of uniformly $F$-compatible prime ideals 
(and also without any excellence hypothesis). The same
explicit bounds were also obtained in \cite[Thm.\ 4.2]{ST10}  
in the excellent local case, albeit via more involved considerations. It is a different matter
that the authors do not know an example of a non-excellent Frobenius split 
noetherian local ring. 
Indeed, the most common method of constructing non-excellent
noetherian local rings in prime characteristic is in the dimension $1$ 
regular case via \emph{arc valuations}; see \cite{DS18}. However, \cite[Thm.\ 7.4.1]{DMS20}
shows that any Frobenius split normal noetherian domain of dimension $1$
has to be excellent. Thus, the `usual' method of constructing non-excellent
noetherian local rings in prime characteristic do not give examples that
are Frobenius split. Consequently, it is unclear if Proposition 
\ref{prop:finiteness-F-compatible} gives additional cases of finiteness
of $F$-compatible ideals outside the excellent setting in the local case.


\vspace{1mm}

\item The proofs of the finiteness of the set $\Sigma$ of uniformly $F$-compatible
ideals of Frobenius split noetherian rings that are $F$-finite or local 
only used the facts
that $\Sigma$ is closed under arbitrary intersections, consists of  radical ideals, and 
is also closed under taking minimal primes of ideals.
The property of $\Sigma$ being closed under sums of ideals is never used, in contrast with
\cite[Cor.\ 3.2]{EH08} and \cite[Thm.\ 4.1]{ST10}.
\end{enumerate}
\end{remark}

\begin{corollary}
\label{cor:complete-local}
If $(R,\mathfrak m)$ is a complete local noetherian ring of prime 
characteristic that is $F$-pure, then $R$ has finitely many uniformly
$F$-compatible ideals.
\end{corollary}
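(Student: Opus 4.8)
The plan is to reduce immediately to Proposition~\ref{prop:finiteness-F-compatible}(2), which already settles the finiteness of uniformly $F$-compatible ideals for \emph{Frobenius split} noetherian local rings. Thus the only point to address is the passage from $F$-purity to Frobenius splitting for a complete local ring; granting that, $(R,\mathfrak m)$ becomes a Frobenius split noetherian local ring and Proposition~\ref{prop:finiteness-F-compatible}(2) applies verbatim. In fact one even inherits the explicit count of uniformly $F$-compatible primes of each coheight coming, via Proposition~\ref{prop:multiplicity-bound}, from the embedding dimension of $R$.

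To carry out the reduction, I would argue as follows. Since $R$ is noetherian, $F$-purity says precisely that the Frobenius $F\colon R\to F_*R$ is a pure map of $R$-modules. I then invoke the classical fact, attributed to Auslander, that every pure $R$-linear map out of a complete local noetherian ring splits. Concretely, with $E\coloneqq E_R(R/\mathfrak m)$ the injective hull of the residue field, purity of $R\to F_*R$ forces $E = R\otimes_R E \to F_*R\otimes_R E$ to be injective; applying Matlis duality $\Hom_R(-,E)$ and using the adjunction isomorphism $\Hom_R(F_*R\otimes_R E, E)\cong \Hom_R(F_*R,\Hom_R(E,E))\cong \Hom_R(F_*R,R)$ together with completeness (so that $\Hom_R(E,E)\cong R$), the evaluation-at-$1$ map $\Hom_R(F_*R,R)\to R$ becomes a surjection, and any preimage of $1$ is a Frobenius splitting of $R$. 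This is exactly the lemma recorded as \cite[Lem.\ 2.3.3]{DMS20} and already used in the proof of Corollary~\ref{cor:ideal-traces-approx-gor}(4), here applied with the module $F_*R$.

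Combining the two steps: $R$ complete local and $F$-pure makes $F\colon R\to F_*R$ pure, hence split, so $R$ is Frobenius split, and Proposition~\ref{prop:finiteness-F-compatible}(2) then yields the finiteness of uniformly $F$-compatible ideals. I do not anticipate a genuine obstacle here, since all the substance lives in results already established; the only point requiring a moment's care is to check that the splitting produced by Auslander's lemma is a splitting of the Frobenius map itself, but this is immediate because surjectivity of the evaluation-at-$1$ map $\Hom_R(F_*R,R)\to R$ is by definition equivalent to $R$ being a direct summand of $F_*R$ along $F$.
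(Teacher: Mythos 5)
Your proposal is correct and follows essentially the same route as the paper: both pass from $F$-purity to Frobenius splitting via Auslander's lemma that a pure map out of a complete local noetherian ring splits (the same reference, \cite[Lem.\ 2.3.3]{DMS20}, resp.\ \cite[Lem.\ 1.2]{Fed83}), and then invoke Proposition \ref{prop:finiteness-F-compatible}(2). Your Matlis-duality sketch of Auslander's lemma is a correct elaboration of a step the paper simply cites.
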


\begin{proof}
This follows by Proposition \ref{prop:finiteness-F-compatible}, 
because purity of a ring map $R \rightarrow S$, when $R$ is complete, is
equivalent to splitting by a result due to Auslander; see 
\cite[Lem.\ 1.2]{Fed83} (or \cite[Lem.\ 2.3.3]{DMS20} for a more general
statement).
\end{proof}

\section{The splinter locus}
\label{sec:splinter-locus}
Suppose $X$ is a locally 
noetherian scheme. We define
\[
\Spl(X) \coloneqq \textrm{$\{x \in X: \mathcal{O}_{X,x}$ is a 
splinter$\}$}. 
\]
Note that for any open subscheme $U$ of $X$, 
$\Spl(U) = \Spl(X) \cap U$. If $R$ is a noetherian ring, we 
define $\Spl(R)$ to be $\Spl(\Spec(R))$. Since splinters are integrally closed domains, it follows that
$\Spl(X)$ is contained in the normal locus
\[
\Nor(X) \coloneqq \textrm{$\{x \in X: \mathcal{O}_{X,x}$ is an 
integrally closed domain$\}$}
\]
of $X$. Thus, if $\Nor(X)$ is open (for example, if 
$X$ is has an open regular locus \cite[Cor.\ (6.13.5)]{EGAIV_2}), 
then to show that $\Spl(X)$ is open it suffices
to assume that $X$ is normal. Since openness of the 
splinter locus can be checked on a sufficiently fine affine open cover of $X$, 
one may further assume that $X = \Spec(A)$, where $A$
is a noetherian integrally closed domain (in particular, 
$A$ is approximately Gorenstein). Thus, we will 
analyze when the splinter locus of a noetherian domain is open.

\subsection{Traces and splinter loci} 
Recall that if $B$ is an $A$-algebra, then the ideal trace of $B/A$ is
\[
T_{B/A} \coloneqq \bigcap_{I} (I: IB \cap A),
\]
where the intersection ranges over all ideals $I$ of $A$. Similarly, the trace of $B/A$ 
is 
\[
\uptau_{B/A} \coloneqq \im(\Hom_A(B,A) \xrightarrow{\eval \MVAt 1} A).
\]
In general, 
$\uptau_{B/A} \subseteq T_{B/A}$, and equality holds when
$A$ is an approximately Gorenstein noetherian domain and $B$ is a finite
extension of $A$ (Corollary \ref{cor:ideal-traces-approx-gor}(1)). 
We begin with the following observation:

\begin{lemma}
\label{lem:minimal-trace}
Let $A$ be a noetherian domain, $\mathcal{C}$ be the 
collection of finite $A$-subalgebras of $A^+$ and
\[
\textrm{$\Sigma_\uptau \coloneqq \{\uptau_{B/A}: B \in \mathcal{C}\}$ and 
$\Sigma_T \coloneqq \{T_{B/A}: B \in \mathcal{C}\}$}.
\]
\begin{enumerate}
	\item For all $B \in \mathcal{C}$, $\uptau_{B/A} \neq 0$ and
	$T_{B/A} \neq 0$.
	\item If $B, B' \in \mathcal{C}$ such that $B \subseteq B'$,
	then $\uptau_{B'/A} \subseteq \uptau_{B/A}$ (resp.
	$T_{B'/A} \subseteq T_{B/A}$). Thus, $\Sigma_\uptau$ (resp.
	$\Sigma_T$) is a
	cofiltered poset of ideals of $A$ under inclusion.
	\item If a minimal element of $\Sigma_\uptau$ (resp. $\Sigma_T$) exists,
	then this
	is the smallest element of $\Sigma_\uptau$ (resp. $\Sigma_T$) in the sense that it is
	contained in every other element of $\Sigma_\uptau$ (resp. 
	$\Sigma_T$).
	\item If $\Sigma_\uptau$ (resp. $\Sigma_T$) is finite, then it has a smallest
	element under inclusion.
	\item $A$ is a splinter $\Longleftrightarrow$ $\bigcap_{B \in \mathcal
	C} \uptau_{B/A} = A \Longleftrightarrow \bigcap_{B \in \mathcal{C}} T_{B/A} = A$.
\end{enumerate}
\end{lemma}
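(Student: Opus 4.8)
The plan is to dispatch (1)--(4) by direct manipulation and to reserve the real work for (5). For (1), each $B \in \mathcal{C}$ is a \emph{nonzero finitely generated torsion-free} $A$-module, being a subring of the domain $A^+$ that contains $A$; such a module embeds into a finite free $A$-module, and postcomposing with a coordinate projection produces a nonzero element of $\Hom_A(B,A)$. Hence $\uptau_{B/A} \neq 0$ by Lemma \ref{lem:trace-ideals}(1), and $T_{B/A} \supseteq \uptau_{B/A} \neq 0$ by Lemma \ref{lem:contraction-ideals}(3). For (2), if $B \subseteq B'$ in $\mathcal{C}$ then restriction $\Hom_A(B',A) \to \Hom_A(B,A)$ commutes with evaluation at $1$, giving $\uptau_{B'/A} \subseteq \uptau_{B/A}$; and $IB \cap A \subseteq IB' \cap A$ for every ideal $I$ gives $(I : IB' \cap A) \subseteq (I : IB \cap A)$, hence $T_{B'/A} \subseteq T_{B/A}$ after intersecting over $I$. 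Since the compositum $B_1 B_2 \subseteq A^+$ of two members of $\mathcal{C}$ is again module-finite over $A$, the poset $\mathcal{C}$ is directed, so $\Sigma_\uptau$ and $\Sigma_T$ are cofiltered. Parts (3) and (4) are then formal: in a cofiltered poset a minimal element is dominated from below by the image of any joint upper bound and is therefore the unique smallest element; and if $\Sigma_\uptau$ (resp.\ $\Sigma_T$) is finite, one exhibits such a smallest element concretely as the trace (resp.\ ideal trace) of the compositum of a finite list of representatives.

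For (5), the first step is the elementary observation that an intersection of ideals of $A$ equals $A$ iff every one of them does. Thus $\bigcap_{B} \uptau_{B/A} = A$ means $\uptau_{B/A} = A$ for all $B \in \mathcal{C}$, i.e.\ $A$ is a direct summand of every $B \in \mathcal{C}$; and $\bigcap_{B} T_{B/A} = A$ means $T_{B/A} = A$ for all $B$, i.e.\ every $A \hookrightarrow B$ is cyclically pure by Lemma \ref{lem:contraction-ideals}(1). Next, since every module-finite extension domain of $A$ is $A$-isomorphic to a member of $\mathcal{C}$, and since the splinter condition for a noetherian domain is equivalent to splitting off from every such extension (the reduction to domains via lying over and passage to a minimal prime lying over $(0)$; see \cite{DT19}), one gets ``$A$ is a splinter $\iff \bigcap_{B} \uptau_{B/A} = A$''. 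The implication $\bigcap_{B} \uptau_{B/A} = A \Rightarrow \bigcap_{B} T_{B/A} = A$ is immediate from $\uptau_{B/A} \subseteq T_{B/A}$ (Lemma \ref{lem:contraction-ideals}(3)).

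The substantive point is the remaining implication $\bigcap_{B} T_{B/A} = A \Rightarrow \bigcap_{B} \uptau_{B/A} = A$. Assuming $T_{B/A} = A$, equivalently that $A \hookrightarrow B$ is cyclically pure, for all $B \in \mathcal{C}$, I would first deduce that $A$ is normal: given $x = a/b$ in $\Frac(A)$ that is integral over $A$, one has $A[x] \in \mathcal{C}$ and $a = bx \in bA[x] \cap A = bA$ by cyclic purity, so $x \in A$. Hence $A$ is approximately Gorenstein, so by Proposition \ref{prop:Hochster-approx-Gorenstein} each cyclically pure finite map $A \hookrightarrow B$ is in fact pure, and being finite it splits by \cite[Cor.\ 5.2]{HR76}; therefore $\uptau_{B/A} = A$ for all $B$, which is what was wanted. (Once $A$ is known to be normal one may alternatively invoke Corollary \ref{cor:ideal-traces-approx-gor}(1) to conclude $\uptau_{B/A} = T_{B/A}$ for every $B \in \mathcal{C}$ at once.)

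The main obstacle is exactly this last implication of (5): extracting normality of $A$ from the purely ideal-theoretic hypothesis $\bigcap_{B} T_{B/A} = A$, and then upgrading cyclic purity to genuine splitting by combining Hochster's characterization of approximately Gorenstein rings with the Hochster--Roberts splitting theorem. Everything else amounts to bookkeeping with the containment $\uptau_{B/A} \subseteq T_{B/A}$ and the directedness of $\mathcal{C}$.
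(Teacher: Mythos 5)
Your proposal is correct, and parts (1)--(4) and the overall circle of implications in (5) match the paper's structure; the interesting divergence is in the last implication of (5). The paper argues that $\bigcap_B T_{B/A}=A$ gives cyclic purity of $A\to B$ for every $B\in\mathcal{C}$, passes to the filtered colimit to get cyclic purity of $A\to A^+$, and then quotes \cite[Lem.\ 2.3.1]{DT19} to conclude $A$ is a splinter; you instead stay at finite level and re-derive that lemma's content in a self-contained way: cyclic purity into $A[x]$ for $x\in\Frac(A)$ integral over $A$ forces normality, normality gives the approximately Gorenstein property, Proposition \ref{prop:Hochster-approx-Gorenstein} upgrades cyclic purity to purity, and \cite[Cor.\ 5.2]{HR76} upgrades purity of a module-finite map to splitting. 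This buys independence from the external citation at the cost of repeating an argument the authors had already packaged elsewhere (and it requires the small extra reduction, which you correctly sketch, that a noetherian domain is a splinter iff it splits off from every module-finite extension domain, every such domain being $A$-isomorphic to a member of $\mathcal{C}$). Your other deviations are cosmetic: in (1) you embed the torsion-free module $B$ into a finite free module and project, where the paper computes $\Frac(A)\otimes_A\Hom_A(B,A)=\Hom_{\Frac(A)}(\Frac(B),\Frac(A))\neq 0$; in (4) you exhibit the smallest element as the trace of a compositum of finitely many representatives, where the paper just invokes that a finite poset has a minimal element and applies (3). (Minor wording slip: in (3) you mean a common \emph{lower} bound in the cofiltered poset, not a ``joint upper bound''; the intended argument is the standard one and is fine.)
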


We say a partially order set $(\Sigma, \leq)$ is \emph{cofiltered} 
if for all $x, y \in \Sigma$, there exists $z \in \Sigma$
such that $z \leq x, y$. 

\begin{proof}
(1) For all $B \in \mathcal C$, $\Hom_A(B,A) \neq 0$ because
\[
\Frac(A) \otimes_A \Hom_A(B, A) = \Hom_{\Frac(A)}(\Frac(B), \Frac(A)) \neq 0.
\]
Thus, for all $B \in \mathcal C$, 
$\uptau_{B/A} \neq 0$. Since $T_{B/A}$ contains $\uptau_{B/A}$ 
(Lemma \ref{lem:contraction-ideals}(3)),
it follows that for all $B \in \mathcal{C}$, $T_{B/A} \neq 0$.

(2) If $B \subseteq B'$, then $\Hom_A(B',A) \xrightarrow{\eval
 \MVAt 1} R$ factors as
\[
\Hom_A(B', A) \rightarrow \Hom_A(B, A) \xrightarrow{\eval \MVAt 1} A,
\]
where $\Hom(B',A) \rightarrow \Hom(B,A)$ is given by restriction
to $B$. It follows that $\uptau_{B'/A} \subseteq \uptau_{B/A}$. 
The set $\mathcal{C}$ is a filtered poset under inclusion
because if $B, B' \in \mathcal C$, then $B[B'] \in \mathcal C$.
Thus,  $\Sigma_\uptau$ is a cofiltered poset
under inclusion.

Analogously, if $B \subseteq B'$, then $IB \cap A \subseteq IB' \cap A$. 
This means that $(I: IB'\cap A) \subseteq (I: IB \cap A)$, and
so, $T_{B'/A} \subseteq T_{B/A}$. Again, because $\mathcal{C}$ is a
filtered poset, $\Sigma_T$ is a cofiltered poset.

(3) holds generally for any cofiltered poset.

(4) follows from (3) because any finite poset has a minimal
element.

(5) If $A$ is a splinter, then $\uptau_{B/A} = A$ for any finite
extension of $A$ by definition of the splinter property. 
This shows that if $A$ is a splinter, then $\bigcap_{B \in \mathcal
	C} \uptau_{B/A} = A$.

If $\bigcap_{B \in \mathcal{C}} \uptau_{B/A} = A$, then
$\bigcap_{B \in \mathcal{C}} T_{B/A} = A$ because $\uptau_{B/A} 
\subseteq T_{B/A}$. 

It remains to show that if 
$$\bigcap_{B \in \mathcal{C}} T_{B/A} = A,$$ 
then $A$ is splinter. If $1 \in T_{B/A}$,
then for all ideals $I$ of $A$ we have 
$$I = IB \cap A.$$ 
Thus
$A \rightarrow B$ is cyclically pure for
all $B \in \mathcal{C}$. Since $A^+ = \colim_{B \in \mathcal{C}} B$,
it follows that $A \rightarrow A^+$ is also cyclically pure.
Then $A$ is a splinter by \cite[Lem.\ 2.3.1]{DT19}. 
\end{proof}

\begin{remark}
{\*}
\begin{enumerate}
    \item Using the notation of Lemma \ref{lem:minimal-trace}, if 
    $\Sigma_\uptau$ has a smallest element under inclusion, then there
    exists a finite $A$-subalgebra $B_0$ of $A^+$ such that
    \[
    \uptau_{B_0/A} = \bigcap_{B \in \mathcal{C}} \uptau_{B/A}.
    \]
    This implies that for all finite $A$-subalgebras $B'$ of $A^+$
    containing $B_0$,
    \[
    \uptau_{B_0/A} = \uptau_{B'/A},
    \]
    that is, the traces of the finite $A$-subalgebras
    of $A^+$ stabilize. We will see that this stable trace ideal,
    when it exists, defines the splinter locus of $A$.

    \item A similar stabilization result also holds if $\Sigma_T$
    has a smallest element under inclusion. If $A$
    is additionally approximately Gorenstein, then 
    $\Sigma_\uptau = \Sigma_T$ by Corollary \ref{cor:test-ideal-trace}(1),
    that is, we get the same stable ideal as in the previous remark.
    
\end{enumerate}
\end{remark}

\begin{definition}
\label{def:splinter-ideal}
Let $A$ be a noetherian domain and $\mathcal{C}$ be the 
collection of finite $A$-subalgebras of $A^+$. We define the
\emph{trace of $A$}, denoted $\uptau_A$, to be
\[
\uptau_A \coloneqq \bigcap_{B \in \mathcal{C}} \uptau_{B/A}.
\]
The \emph{ideal trace of $A$}, denoted $T_A$, is defined to be
\[
T_A \coloneqq \bigcap_{B \in \mathcal{C}} T_{B/A}.
\]
\end{definition}

\begin{proposition}
\label{prop:splinter-ideal-properties}
Let $A$ be a noetherian domain and $\mathcal{C}$ be the 
collection of finite $A$-subalgebras of $A^+$. 
Then we have the following:
\begin{enumerate}
    \item $A$ is a splinter $\Longleftrightarrow \uptau_A = A \Longleftrightarrow T_A = A$.
    \item $T_A = T_{A^+/A}$. 
    \item If $A$ is approximately Gorenstein, then $\uptau_A = T_A$
    and $\uptau_A$ is the big plus closure test ideal.
    \item If $(A,\mathfrak m)$ is complete local, 
    then $\uptau_A = \uptau_{A^+/A}$.
\end{enumerate}
Assume that $\Sigma_\uptau \coloneqq \{\uptau_{B/A} \colon B \in \mathcal{C}\}$
has a smallest element under inclusion. Then:
\begin{enumerate}
\setcounter{enumi}{4}
    \item There exists $B_0 \in \mathcal{C}$
    such that $\uptau_A = \uptau_{B_0/A}$. 
    \item If $A$ is approximately Gorenstein, there exists $B_0 \in \mathcal{C}$
    such that $T_A = T_{B_0/A} = \uptau_{B_0/A}$.
     \item If $(A, \mathfrak m)$ is complete local, 
    there exists $B_0 \in \mathcal{C}$ such that $\uptau_{A^+/A} = 
    \uptau_{B_0/A} = T_{B_0/A} = T_{A^+/A}$.
    \item If $\mathfrak{p} \in \Spec(A)$, then $\uptau_{A_\mathfrak{p}} = (\uptau_A)_{\mathfrak{p}}$.
    \item For $\mathfrak{p} \in \Spec(A)$, $A_{\mathfrak p}$ is a 
    splinter if and only if $\uptau_A \nsubseteq \mathfrak{p}$. 
    Thus, the splinter locus of $A$ is the complement in $\Spec(A)$ of $\mathbf{V}(\uptau_A)$.
    \item There exists a finite $A$-subalgebra $B_0$ of $A^+$ such that if
    $A \hookrightarrow B_0$ splits, then $A$ is a splinter.
\end{enumerate}
\end{proposition}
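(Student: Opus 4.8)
The plan is to dispatch the ten assertions in order; items (1)--(7), (9) and (10) are short deductions from the results of \S\ref{sec:F-compatible}, while item (8) carries the only real content. Items (1)--(4) need no hypothesis on $\Sigma_\uptau$. Item (1) merely restates Lemma \ref{lem:minimal-trace}(5) in the notation of Definition \ref{def:splinter-ideal}. For (2), I would observe that $A^+$ is the filtered union of the $B \in \mathcal C$, so $IA^+ \cap A = \bigcup_B (IB \cap A)$ for each ideal $I$ of $A$; taking colons turns this union into an intersection, and rearranging the resulting double intersection gives $T_{A^+/A} = \bigcap_B T_{B/A} = T_A$. For (3), approximate Gorensteinness makes $\uptau_{B/A} = T_{B/A}$ for every finite $A$-subalgebra $B$ of $A^+$ by Corollary \ref{cor:ideal-traces-approx-gor}(1), so $\uptau_A = T_A = T_{A^+/A}$ by (2), and this is the big plus closure test ideal $\uptau_{\cl_{A^+}}(A)$ by Proposition \ref{prop:algebra-closure}(4). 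For (4), a complete local domain is reduced and excellent, hence approximately Gorenstein, so (3) applies; combining it with Corollary \ref{cor:ideal-traces-approx-gor}(4), which gives $\uptau_{A^+/A} = T_{A^+/A}$, yields $\uptau_A = \uptau_{A^+/A}$.

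Now assume $\Sigma_\uptau$ has a smallest element. By Lemma \ref{lem:minimal-trace}(3) that element is contained in every $\uptau_{B/A}$, so it equals $\uptau_A$; taking $B_0 \in \mathcal C$ with $\uptau_{B_0/A} = \uptau_A$ gives (5). Item (6) follows because when $A$ is approximately Gorenstein we have $\Sigma_T = \Sigma_\uptau$ term by term by Corollary \ref{cor:ideal-traces-approx-gor}(1), so $T_A = \uptau_A = \uptau_{B_0/A} = T_{B_0/A}$; item (7) then combines (4), (6) and (2). Item (10) is immediate: if $A \hookrightarrow B_0$ splits then $A$ is a direct summand of $B_0$, so $\uptau_{B_0/A} = A$ (as noted after Definition \ref{def:trace-ideals}), whence $\uptau_A = A$ and $A$ is a splinter by (1). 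Finally, (9) is a formal consequence of (8): applying (1) to the noetherian domain $A_{\mathfrak p}$, it is a splinter iff $\uptau_{A_{\mathfrak p}} = A_{\mathfrak p}$ iff $(\uptau_A)_{\mathfrak p} = A_{\mathfrak p}$ iff $\uptau_A \not\subseteq \mathfrak p$, so the splinter locus of $A$ is $\Spec(A) \setminus \mathbf{V}(\uptau_A)$.

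So the heart of the matter is (8), which I expect to be the main obstacle. Fixing once and for all a common algebraic closure of $\Frac(A)$, clearing denominators in an integral equation identifies $(A_{\mathfrak p})^+$ with the localization $A^+ \otimes_A A_{\mathfrak p}$; consequently every finite $A_{\mathfrak p}$-subalgebra $C$ of $(A_{\mathfrak p})^+$ is contained in $B \otimes_A A_{\mathfrak p}$ for some $B \in \mathcal C$, and we may enlarge $B$ so that $B \supseteq B_0$. Since $\uptau_{B_0/A}$ is the \emph{smallest} element of $\Sigma_\uptau$, such a $B$ satisfies $\uptau_{B/A} = \uptau_{B_0/A}$, and then Lemma \ref{lem:trace-ideals}(3) (applicable since $B$ is module finite over the noetherian ring $A$) together with the trace-monotonicity of Lemma \ref{lem:minimal-trace}(2) applied over $A_{\mathfrak p}$ gives $(\uptau_{B_0/A})_{\mathfrak p} = (\uptau_{B/A})_{\mathfrak p} = \uptau_{(B \otimes_A A_{\mathfrak p})/A_{\mathfrak p}} \subseteq \uptau_{C/A_{\mathfrak p}}$. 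Thus $(\uptau_{B_0/A})_{\mathfrak p}$ is contained in $\uptau_{C/A_{\mathfrak p}}$ for every such $C$, and since $B_0 \otimes_A A_{\mathfrak p}$ is itself one of these $C$'s, the intersection defining $\uptau_{A_{\mathfrak p}}$ equals $(\uptau_{B_0/A})_{\mathfrak p} = (\uptau_A)_{\mathfrak p}$, which is (8). The points requiring care are the identification of $(A_{\mathfrak p})^+$ with a localization of $A^+$ (the ambient algebraic closure must be pinned down) and the fact that this argument genuinely exploits the stabilization hypothesis: without it, localization need not commute with the infinite intersection defining $\uptau_A$.
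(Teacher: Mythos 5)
Your proposal is correct and follows essentially the same route as the paper: items (1)--(7), (9), (10) are the same short deductions from Lemma \ref{lem:minimal-trace}, Corollary \ref{cor:ideal-traces-approx-gor} and Proposition \ref{prop:algebra-closure}, and your treatment of (8) is the paper's argument — identify $(A_{\mathfrak p})^+$ with $(A^+)_{\mathfrak p}$, use the smallest element of $\Sigma_\uptau$ to commute localization with the intersection, and spread out finite $A_{\mathfrak p}$-subalgebras to finite $A$-subalgebras of $A^+$ by clearing denominators. The only cosmetic difference is that in (8) you settle for a containment $C \subseteq B\otimes_A A_{\mathfrak p}$ with $B \supseteq B_0$ rather than realizing $C$ exactly as a localization $(T')_{\mathfrak p}$ as the paper does, which is equally valid.
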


\begin{proof}
(1) is precisely Lemma \ref{lem:minimal-trace}(5). 

(2) For all $B \in \mathcal{C}$, we have $B \subseteq A^+$. Therefore,
\[
T_{A^+/A} = \bigcap_I (I:IA^+ \cap A) \subseteq \bigcap_I (I:IB\cap A) = T_{B/A},
\]
where the intersections are indexed by all ideals $I$ of $A$ and the middle containment 
follows because $IB \cap A \subseteq IA^+ \cap A$. Thus, 
$$T_{A^+/A} \subseteq T_A.$$
Now suppose $c \in T_A$ and let $I$ be an ideal of $A$. 
To prove $T_A \subseteq T_{A^+/A}$
we have to show that $c(IA^+ \cap A) \subseteq I$.
Let $z \in IA^+ \cap A$ and choose a finite $A$-subalgebra $B$ of $A^+$
such that $z \in IB \cap A$. Since $c \in T_{B/A}$, it follows that
$cz \in I$, and so, $c(IA^+ \cap A) \subseteq I$.


(3) If $A$ is approximately Gorenstein, then for all $B \in 
\mathcal{C}$, 
\[
\uptau_{B/A} = T_{B/A}
\]
by Corollary \ref{cor:ideal-traces-approx-gor}(1). So $\uptau_A = T_A$
by the definition of these ideals. By (2), we then get $\uptau_A = T_{A^+/A}$, 
and taking $B = A^+$ in 
Proposition \ref{prop:algebra-closure}(4), it follows that 
$T_{A^+/A} = \uptau_{\cl_{A^+}}(A)$. Since $\cl_{A^+}$ is precisely
plus closure, $\uptau_{\cl_{A^+}}(A)$ is the big plus closure test
ideal.

(4) A noetherian complete local domain is approximately Gorenstein
because reduced excellent rings are approximately Gorenstein. Then
$\uptau_A = T_A$ by (3), $T_A = T_{A^+/A}$ by (2), and
$T_{A^+/A} = \uptau_{A^+/A}$ by Corollary \ref{cor:ideal-traces-approx-gor}(4).

(5) follows by
the hypothesis that $\Sigma_\uptau$ has a smallest element.

(6) Since $A$ is approximately Gorenstein, 
(3) implies
\[
T_A = \uptau_A.    
\]
Now choose $B_0 \in \mathcal{C}$ that satisfies the conclusion of (5).
Then 
$T_A = \uptau_A = \uptau_{B_0/A} = T_{B_0/A}$,
where the last equality
again follows by the approximately Gorenstein property and 
Corollary \ref{cor:ideal-traces-approx-gor}(1).


(7) Since complete local domains are
approcimately Gorenstein, by (2), (3) and (4), 
$$\uptau_{A^+/A} = \uptau_{A} = T_A = T_{A^+/A},$$ 
and by (6), there exists
$B_0 \in \mathcal{C}$ such that $T_A = T_{B_0/A} = \uptau_{B_0/A}$. 

(8) If $\Sigma_\uptau$ has a smallest element $\uptau_{B_0/A}$, then by exactness of
localization, for $\mathfrak p \in \Spec(A)$, the collection
\[
\Sigma_{\uptau, \mathfrak p} \coloneqq \{(\uptau_{B/A})_{\mathfrak p}: B \in \mathcal{C}\}
\]
also has a smallest element under inclusion, namely $(\uptau_{B_0/A})_{\mathfrak p} = 
\uptau_{{(B_0)}_{\mathfrak p}/A_{\mathfrak p}}$. Here the last equality follows by
Lemma \ref{lem:trace-ideals}(3). Thus,
\[
(\uptau_A)_{\mathfrak p} = (\bigcap_{B \in \mathcal{C}} \uptau_{B/A})_{\mathfrak p} = 
(\uptau_{B_0/A})_{\mathfrak p} 
= \bigcap_{B \in \mathcal C}(\uptau_{B/A})_{\mathfrak p} = 
\bigcap_{B \in \mathcal{C}}\uptau_{B_{\mathfrak p}/A_{\mathfrak p}}.
\]
It suffices to show that 
\begin{equation}
\label{eq:key-eq}
\bigcap_{B \in \mathcal{C}}\uptau_{B_{\mathfrak p}/A_{\mathfrak p}} = \uptau_{A_\mathfrak{p}}.
\end{equation}
Let $T$ be a finite $A_{\mathfrak p}$-subalgebra of 
$(A_\mathfrak{p})^+ = (A^+)_{\mathfrak p}$. Suppose $T = A_{\mathfrak p}[t_1,\dots,t_n]$. Since each
$t_i$ is integral over $A_{\mathfrak p}$, there exists $s \in A \setminus \mathfrak{p}$
such that for all $i$, $st_i$ is integral over $A$. As $s$ is a unit in $T$, 
replacing $t_i$ by $st_i$ does not change $T$, so 
we may assume that each $t_i$ is integral over $A$. Then
the $A$-subalgebra
\[
T' \coloneqq A[t_1,\dots,t_n]
\]
of $T$ has the property that $T'$ is a finite extension of $A$ contained in $A^+$ and
$(T')_{\mathfrak p} = T.$
Thus, 
\[
\uptau_{T/A_{\mathfrak p}} = \uptau_{(T')_{\mathfrak p}/A_{\mathfrak p}} = 
(\uptau_{T'/A})_{\mathfrak p}.
\]
Said differently, this argument shows 
that every trace of a finite $A_{\mathfrak p}$-subalgebra of
$(A_{\mathfrak p})^+$ is the localization at $\mathfrak p$ of the trace of some finite
$A$-subalgebra of $A^+$. Unravelling the definition of $\uptau_{A_{\mathfrak{p}}}$,
this implies (\ref{eq:key-eq}).

(9) By (1), $A_{\mathfrak p}$ is a splinter if and only if $\uptau_{A_\mathfrak{p}} = A_{\mathfrak p}$. Now by (7), $\uptau_{A_\mathfrak{p}} = (\uptau_A)_{\mathfrak p}$. Thus,
$\uptau_{A_\mathfrak{p}} = A_{\mathfrak p}$ if and only if $(\uptau_A)_{\mathfrak p} = A_{\mathfrak p}$, 
and this last equality holds precisely when
$\uptau_A \nsubseteq \mathfrak{p}$. It follows that the splinter locus of $A$ is the
complement of the closed set $\mathbf{V}(\uptau_A)$ of $\Spec(A)$.

(10) Choose a finite $A$-subalgebra $B_0$ of $A^+$ that satisfies (5). 
If $A \hookrightarrow B_0$ splits, then $\uptau_A = \uptau_{B_0/A} = A$, and so,
$A$ is a splinter by (1).
\end{proof}

\begin{remark}
The equality $\uptau_A = \uptau_{A^+/A}$ in part (4) and the equality
$\uptau_{A^+/A} = \uptau_{B_0/A}$ in part (6) of Proposition \ref{prop:splinter-ideal-properties} fail quite dramatically if we do
not assume $A$ is complete, even for excellent regular local rings. Indeed, by 
\cite{DM20} choose an excellent
Henselian regular local ring $A$ of Krull dimension $1$ and prime characteristic 
$p > 0$ such that $A$ admits no nonzero $A$-linear maps $F_*A \rightarrow A$. 
Since $F_*A$
embeds in $A^+$, this implies that there are no nonzero $A$-linear maps 
$A^+ \rightarrow A$. Thus, $\uptau_{A^+/A} = 0$ while $\uptau_A = A$ because 
regular rings of prime characteristic are splinters by \cite{Hoc73}. Note that in 
this case $\uptau_A$ equals $\uptau_{B/A}$ for any finite $A$-subalgebra $B$ of $A^+$.
\end{remark}

\begin{example}
Suppose $R$ is a noetherian $F$-finite normal domain that is $\mathbf{Q}$-Gorenstein.
Then \cite{BST15} shows that the set $\Sigma_\uptau$ of 
Proposition \ref{prop:splinter-ideal-properties} has a smallest element,
namely the big test ideal $\uptau_b(R)$ of $R$. Thus, $\uptau_R = \uptau_b(R)$,
and so, Proposition \ref{prop:splinter-ideal-properties}(9) shows that
the splinter locus of $R$ coincides with the complement of $\mathbf{V}(\uptau_b(R))$,
which is the strongly $F$-regular locus of $R$. This is not surprising because 
Singh showed that in the affine $\mathbf{Q}$-Gorenstein setting, the splinter
condition is the same as being $F$-regular \cite{Sin99(b)} and it is known that
for $F$-finite $\mathbf{Q}$-Gorenstein rings, 
(weak) $F$-regularity is equivalent to strong $F$-regularity \cite{Mac96}.
\end{example}

We will now use Proposition \ref{prop:splinter-ideal-properties} to compare the traces
under Henselizations and completions.

\begin{proposition}
\label{prop:trace-henselization-completion}
Let $(A, \mathfrak m)$ be a noetherian normal domain. We have the following:
\begin{enumerate}
    \item If $A^h$ is the Henselization of $A$ with respect to $\mathfrak{m}$, then
    $\uptau_{A^h} \cap A = \uptau_A$.
    \item If $A$ has geometrically regular formal fibers and $\widehat{A}$ is the $\mathfrak m$-adic completion, then $\uptau_{\widehat{A}} \cap A 
    = \uptau_A$.
\end{enumerate}
\end{proposition}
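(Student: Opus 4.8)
The plan is to reduce both parts to one ideal-theoretic input from \cite{DT19}: the plus closure of an ideal primary to the maximal ideal is unchanged by Henselization, and---when the formal fibers are geometrically regular---by completion. Write $K = \Frac(A)$, and for a normal domain $R$ and an ideal $J$ of $R$ write $J^+ := JR^+\cap R$ for its plus closure. Since $A$ is normal it is approximately Gorenstein, so Proposition \ref{prop:splinter-ideal-properties}(2)--(3) together with Proposition \ref{prop:algebra-closure}(5) give
\[
\uptau_A = T_{A^+/A} = \bigcap_{I}\,(I : I^+),
\]
the intersection over the $\mathfrak m$-primary ideals $I$ of $A$. The Henselization $A^h$, and (since the formal fibers are regular and $\widehat A$ is local) the completion $\widehat A$, are again noetherian normal local domains, so the same formula holds for each of them. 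Moreover $A^h/\mathfrak m^nA^h = A/\mathfrak m^n = \widehat A/\mathfrak m^n\widehat A$ for all $n$, so every ideal of $A^h$ or of $\widehat A$ that is primary to the maximal ideal is extended from an $\mathfrak m$-primary ideal of $A$. Hence
\[
\uptau_{A^h} = \bigcap_{I}\,\bigl(IA^h : (IA^h)^+\bigr) = \bigcap_{I}\,\bigl(IA^h : IA^{h+}\cap A^h\bigr),
\]
again over the $\mathfrak m$-primary ideals $I$ of $A$, and likewise for $\widehat A$.

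First I would establish the inclusion $\uptau_{A^h}\cap A\subseteq\uptau_A$. Since $A\to A^h$ is ind-\'etale, $\Frac(A^h)$ is separably algebraic over $K$, so we may take the algebraic closure used to form $A^{h+}$ to be an algebraic closure $\overline K$ of $K$; then $A^+\subseteq A^{h+}$, and consequently $I^+ = IA^+\cap A\subseteq IA^{h+}\cap A^h = (IA^h)^+$ for every $\mathfrak m$-primary ideal $I$ of $A$. If $c\in\uptau_{A^h}\cap A$, then feeding the ideal $IA^h$ into the displayed description of $\uptau_{A^h}$ yields $c\,(IA^h)^+\subseteq IA^h$, hence $c\,I^+\subseteq IA^h\cap A = I$ by faithful flatness of $A\to A^h$. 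As $I$ runs over all $\mathfrak m$-primary ideals, $c\in\uptau_A$. The completion case is identical, except that one first fixes an embedding $\overline K\hookrightarrow\overline{\Frac(\widehat A)}$ so that $A^+\subseteq\widehat A^+$.

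For the reverse inclusion $\uptau_A\subseteq\uptau_{A^h}$ I would invoke the input from \cite{DT19}: for an essentially \'etale extension of normal local domains plus closure of ideals is preserved, and therefore so it is for the filtered colimit $A\hookrightarrow A^h$; concretely, $IA^{h+}\cap A = IA^+\cap A = I^+$ for every $\mathfrak m$-primary ideal $I$ of $A$. Granting this, $(IA^h)^+ = IA^{h+}\cap A^h$ is primary to $\mathfrak m A^h$ whenever $I$ is proper (Nakayama over a finite $A^h$-subalgebra of $A^{h+}$ rules out the unit ideal), hence extended from $A$, so $(IA^h)^+ = \bigl(IA^{h+}\cap A\bigr)A^h = I^+A^h$. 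Then for $c\in\uptau_A$ and every $\mathfrak m$-primary $I$ we have $c\,(IA^h)^+ = c\,I^+A^h\subseteq IA^h$, since $c\,I^+\subseteq I$; by the displayed description this gives $c\in\uptau_{A^h}$, and as $\uptau_A\subseteq A$ we conclude $\uptau_A = \uptau_{A^h}\cap A$. Part (2) follows by the same chain applied to $A\hookrightarrow\widehat A$, now using the companion statement from \cite{DT19} that plus closure of $\mathfrak m$-primary ideals is preserved under completion \emph{when the formal fibers of $A$ are geometrically regular}---for instance by factoring $A\to A^h\to\widehat A$ and applying the preservation result to the flat local map $A^h\to\widehat A$ of normal Henselian/complete domains, whose fibers remain geometrically regular.

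The one genuinely non-formal ingredient is the input cited from \cite{DT19}: that $IA^+\cap A$, for $I$ primary to $\mathfrak m$, is insensitive to Henselization, and to completion under the hypothesis on formal fibers. Everything else is bookkeeping with flat and faithfully flat base change and the identification (Proposition \ref{prop:splinter-ideal-properties}(3)) of $\uptau$ with the big plus-closure test ideal of an approximately Gorenstein domain.
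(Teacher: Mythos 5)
Your proposal is correct and follows essentially the same route as the paper: identify $\uptau$ with the ideal trace $T_{(\cdot)^+/\cdot}$ via the approximately Gorenstein/normal hypothesis, reduce to ideals primary to the maximal ideal using Proposition \ref{prop:algebra-closure}(5) and the fact that such ideals of $A^h$ and $\widehat A$ are extended from $A$, and then invoke the key inputs from \cite{DT19} that $IA^+\cap A$ is unchanged under Henselization and (with geometrically regular formal fibers) completion. The only cosmetic difference is that you verify the easy containment $\uptau_{A^h}\cap A\subseteq \uptau_A$ directly from faithful flatness rather than quoting the paper's Lemma \ref{lem:descent-plus-closure-test-elements}, which is the same computation.
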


We need the following lemma, which is interesting in its own right.

\begin{lemma}
\label{lem:descent-plus-closure-test-elements}
Let $A \rightarrow B$ be a cyclically pure map of noetherian domains. Then $T_B \cap A \subseteq T_A$.
\end{lemma}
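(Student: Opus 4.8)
The plan is to pass to the ``ideal trace of the absolute integral closure'' descriptions provided by Proposition \ref{prop:splinter-ideal-properties}(2): for any noetherian domain one has $T_A = T_{A^+/A} = \bigcap_{I}(I : IA^+ \cap A)$, where $I$ runs over all ideals of $A$, and likewise $T_B = T_{B^+/B} = \bigcap_{J}(J : JB^+ \cap B)$, with $J$ running over all ideals of $B$. Granting these, the content of the lemma is just that cyclic purity of $A \to B$ converts a contraction $IB \cap A$ back into $I$.

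First I would fix a common ambient field. Since $A \to B$ is cyclically pure it is in particular injective, so $\Frac(A) \hookrightarrow \Frac(B)$; fixing an algebraic closure $\overline{K}$ of $\Frac(B)$ and taking $A^+$ (respectively $B^+$) to be the integral closure of $A$ (respectively $B$) inside $\overline{K}$ --- which is harmless, since $T_{A^+/A}$ and $T_{B^+/B}$ do not depend on the choice of algebraic closure --- yields an inclusion $A^+ \subseteq B^+$, and hence $IA^+ \subseteq IB^+ = (IB)B^+$ for every ideal $I$ of $A$. Next, let $c \in T_B \cap A$ and let $I$ be an arbitrary ideal of $A$; it suffices to prove $c(IA^+ \cap A) \subseteq I$. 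Pick $z \in IA^+ \cap A$. Then $z \in IA^+ \subseteq (IB)B^+$ and $z \in A \subseteq B$, so $z \in (IB)B^+ \cap B$. Feeding $J = IB$ into the membership $c \in T_B = \bigcap_J (J : JB^+ \cap B)$ gives $cz \in IB$. But $c, z \in A$, so $cz \in IB \cap A$, and cyclic purity of $A \to B$ forces $IB \cap A = I$ (equivalently, $A/I \hookrightarrow B/IB$). Hence $cz \in I$; since $z$ and $I$ were arbitrary, $c \in T_{A^+/A} = T_A$.

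The argument is short and I do not expect a serious obstacle; the only point that needs care is the bookkeeping with absolute integral closures, i.e.\ choosing the inclusion $A^+ \subseteq B^+$ compatibly and recording that the ideal traces in play are insensitive to that choice. I would also note that the domain hypothesis on $A$ and $B$ is used only to make $A^+$ and $B^+$ available, that the proof is characteristic-free, and that --- applied to the Henselization and the (suitably nice) completion of a normal noetherian local domain, both of which are cyclically pure --- this is precisely the input needed for Proposition \ref{prop:trace-henselization-completion}.
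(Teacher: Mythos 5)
Your argument is correct and is essentially the paper's own proof: both reduce via Proposition \ref{prop:splinter-ideal-properties}(2) to $T_A = T_{A^+/A}$ and $T_B = T_{B^+/B}$, arrange $A^+ \subseteq B^+$, use $IA^+ \cap A \subseteq (IB)B^+ \cap B$ together with membership in $T_{B^+/B}$ applied to $J = IB$, and finish with $IB \cap A = I$ from cyclic purity. No differences worth noting beyond your (harmless) extra bookkeeping about the choice of algebraic closure.
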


\begin{proof}[Proof of Lemma \ref{lem:descent-plus-closure-test-elements}]
By Proposition \ref{prop:splinter-ideal-properties}(2), $T_A = T_{A^+/A}$ and
$T_B = T_{B^+/B}$. Cyclic purity implies $A \rightarrow B$ is injective. So
we may assume that $A \subseteq B$ and $A^+ \subseteq B^+$.

Let $c \in T_B \cap A = T_{B^+/B} \cap A$, and pick any ideal $I$ of $A$. Then
\[
c(IA^+ \cap A) \subseteq c((IB)B^+ \cap B) \cap A \subseteq IB \cap A = I,
\]
where the first containment follows because $IA^+ \cap A \subseteq (IB)B^+ \cap B$,
the second containment because $c \in T_{B^+/B}$, and the equality because $A \rightarrow B$
is cyclically pure. Thus,
$c \in \bigcap_I (I: IA^+ \cap A) = T_{A^+/A} = T_A$.
\end{proof}

We now prove Proposition \ref{prop:trace-henselization-completion}
utilizing some ideal-theoretic results of \cite{DT19}.

\begin{proof}[Proof of Proposition \ref{prop:trace-henselization-completion}]
Note that a noetherian integrally closed domain is approximately Gorenstein.

(1) $A^h$ is also a
noetherian integrally closed domain 
\cite[\href{https://stacks.math.columbia.edu/tag/06DI}{Tag 06DI}]{stacks-project}. Thus, by Proposition 
\ref{prop:splinter-ideal-properties}(3) and (2), 
\[
\textrm{$\uptau_A = T_{A^+/A}$ \hspace{2mm} and \hspace{2mm} $\uptau_{A^h} = T_{(A^h)^+/A^h}$}.
\]
Therefore it suffices to show that $T_{(A^h)^+/A^h} \cap A = T_{A^+/A}$. Since $A \rightarrow A^h$ is
faithfully flat, Lemma \ref{lem:descent-plus-closure-test-elements}
and Proposition \ref{prop:splinter-ideal-properties}(2) give
\[
T_{(A^h)^+/A^h} \cap A \subseteq T_{A^+/A}.
\]
Now suppose $c \in T_{A^+/A}$. It remains to show that $c \in T_{(A^h)^+/A}$. If $\mathcal{C}$
is the collection of $\mathfrak{m}A^h$-primary ideals of $A^h$, then Proposition \ref{prop:algebra-closure}(5)
shows that
\[
T_{(A^h)^+/A^h} = \bigcap_{J \in \mathcal{C}} (J: J(A^h)^+ \cap A^h).
\]
Therefore it suffices to show that $c(J(A^h)^+ \cap A^h) \subseteq J$, for any $\mathfrak{m}A^h$-primary
ideal $J$ of $A^h$. Note that 
\[
A \rightarrow A^h
\]
is a local homomorphism such that the induced map on completions is an isomorphism. Therefore 
any $\mathfrak{m}A^h$-primary ideal of $A^h$ is expanded from an $\mathfrak m$-primary ideal of $A$
(for example, see \cite[Lem.\ 3.1.2]{DT19}). So choose an $\mathfrak{m}$-primary ideal $I$ of $A$
such that
\[
J = IA^h.
\]
By \cite[Prop. 3.1.4(2)]{DT19},
\[
(J(A^h)^+ \cap A^h) \cap A = J(A^h)^+ \cap A =  I(A^h)^+ \cap A = IA^+ \cap A.
\]
Moreover, $J \subseteq J(A^h)^+ \cap A^h$, which means that $J(A^h)^+ \cap A^h$ is
also an $\mathfrak{m}A^h$-primary ideal expanded from some $\mathfrak m$-primary ideal
of $A$. Then it must be the case that
\[
J(A^h)^+ \cap A^h = (IA^+ \cap A)A^h.
\]
Since $c \in T_{A^+/A}$, we have $c(IA^+ \cap A) \subseteq I$. Consequently,
\[
c(J(A^h)^+ \cap A^h) = c\big{(}(IA^+ \cap A)A^h\big{)} = (c(IA^+ \cap A))A^h
\subseteq IA^h = J,
\]
as desired.

(2) Since $A$ is a normal domain with geometrically regular formal fibers,
$\widehat{A}$ is also a normal domain 
\cite[\href{https://stacks.math.columbia.edu/tag/0BFK}{Tag 0BFK}]{stacks-project} 
and both $A, \widehat{A}$ are 
approximately Gorenstein. Now the rest of the proof of (2) follows from
the argument given in (1) but with $A^h$ replaced by $\widehat{A}$,
$(A^h)^+$ replaced by $\widehat{A}^+$, and \cite[Prop. 3.1.4(2)]{DT19}
replaced by \cite[Prop. 3.2.2]{DT19}, which says that for an ideal $I$
of $A$, $I\widehat{A}^+ \cap A = IA^+ \cap A$.
\end{proof}

\subsection{Separable traces and splinter loci} Let $R$ be a 
noetherian domain with fraction field $K$. Recall that we say $R$ is \emph{N-1} if
the integral closure of $R$ in $K$ is a finite $R$-algebra. 


Excellent domains are \emph{N-1}, although
the \emph{N-1} assumption is substantially more general. 
For example, any noetherian normal
domain is \emph{N-1}, although noetherian normal domains are far from being 
excellent in general. Moreover, in the context of singularity
theory, especially in prime characteristic, most notions of
$F$-singularities such as $F$-injective, $F$-pure, Frobenius
split, splinter, $F$-rational and all avatars of $F$-regular 
imply the \emph{N-1} property at the level of local rings 
\cite[Lem.\ 7.1.4]{DMS20}.

If $R$ is a domain, then we will use $(R^+)^{\s}$ to denote 
the subring of $R^+$ consisting of those elements whose minimal polynomials
over $K$ are separable. Thus, if $K^{\s}$ is
the maximal separable extension of $K$ in $\overline{K} = \Frac(R^+)$, then
$(R^+)^{\s}$ is the integral closure of $R$ in $K^{\s}$, or equivalently,
 $(R^+)^{\s} = R^+ \cap K^{\s}$.

We will now specialize to the setting where $R$ is a noetherian domain of 
prime characteristic $p > 0$. Then recall that for an ideal $I$ of $R$, 
the \emph{Frobenius closure of $I$}, denoted $I^{[F]}$, is defined as
\[
I^{[F]} \coloneqq \textrm{$\{r \in R \colon r^{p^e} \in I^{[p^e]}$ for some 
$e \in \mathbf{Z}_{\geq 0}$\}}.
\]
One can verify that $I^{[F]}$ is an ideal of $R$ that is contained in the tight
closure $I^*$.

The following Proposition, proved by Singh \cite{Sin99(a)}, is the key result that motivates
this section (see also \cite{SS12}). We state the Proposition with a more general hypothesis than in \emph{loc.~cit.},
and explain why Singh's arguments only need this weaker hypothesis.

\begin{proposition}\cite{Sin99(a)}
\label{prop:singh-separable}
Let $R$ be a noetherian N-1 domain of prime characteristic $p > 0$ and 
fraction field $K$. Let $I$ be an ideal of $R$.
\begin{enumerate}
    \item If $r \in I^{[F]}$, then there exists
    a finite generically \'etale $R$-subalgebra $S$ of $R^+$ such that 
    $r \in IS$.
    
    \item For all ideals $I$ of $R$, 
    $IR^{+} \cap R = I(R^+)^{\s} \cap R$.
\end{enumerate}
\end{proposition}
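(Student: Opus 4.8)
The plan is to prove (1) by an explicit construction, and then deduce (2) from (1) by a ``raise to a Frobenius power, then separate'' reduction.

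\smallskip

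\noindent\emph{Proof of (1).} We may assume $I\neq 0$ and $r\in I^{[F]}\setminus I$, so there are $e\geq 1$ and $a_i\in R$ with $r^{p^e}=\sum_{i=1}^n a_i f_i^{p^e}$, where $I=(f_1,\dots,f_n)$ and, after relabeling, $f_1\neq 0$. The naive attempt is to adjoin the $p^e$-th roots $a_i^{1/p^e}\in R^+$, which gives $r=\sum_i a_i^{1/p^e}f_i$; this fails because $R[a_1^{1/p^e},\dots,a_n^{1/p^e}]$ is purely inseparable over $R$. The key idea is to trade the inseparability against the single generator $f_1$: for $i=2,\dots,n$ let $\theta_i\in R^+$ be a root of
\[
g_i(X)\;=\;X^{p^e}+f_1^{p^e}X-a_i\;\in\;R[X].
\]
Since $e\geq 1$ and $f_1\neq 0$, one has $g_i'(X)=f_1^{p^e}\neq 0$, so $\theta_i$ is a simple root of $g_i$ and hence separable over $K$; being also integral over $R$, it lies in $(R^+)^{\s}$. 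Set
\[
\beta\;\coloneqq\;\frac{r-\sum_{i=2}^n\theta_i f_i}{f_1}\;\in\;K(\theta_2,\dots,\theta_n)\subseteq K^{\s}.
\]
Using $\theta_i^{p^e}=a_i-f_1^{p^e}\theta_i$ together with $r^{p^e}=\sum_{i=1}^n a_i f_i^{p^e}$, a direct computation collapses to
\[
\beta^{p^e}\;=\;a_1+\sum_{i=2}^n\theta_i f_i^{p^e},
\]
which is integral over $R$; hence $\beta$ is integral over $R$, so $\beta\in R^+\cap K^{\s}=(R^+)^{\s}$. Then $S\coloneqq R[\beta,\theta_2,\dots,\theta_n]$ is a finite $R$-subalgebra of $(R^+)^{\s}\subseteq R^+$, it is generically \'etale over $R$ (its fraction field is a finite separable extension of $K$), and $r=\beta f_1+\sum_{i=2}^n\theta_i f_i\in IS$, as required.

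\smallskip

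\noindent\emph{Proof of (2).} The inclusion $I(R^+)^{\s}\cap R\subseteq IR^+\cap R$ is clear. Conversely, let $r\in IR^+\cap R$ and write $r=\sum_{i=1}^n c_i f_i$ with $c_i\in R^+$. Put $S_0\coloneqq R[c_1,\dots,c_n]\subseteq R^+$, a finite extension domain of $R$ with fraction field $L$, and let $M\subseteq L$ be the maximal subfield separable over $K$. If $m$ is at least the exponent of the finite purely inseparable extension $L/M$, then $S_0^{p^m}\subseteq M$, so each $c_i^{p^m}$ lies in $M$ and is integral over $R$; raising the displayed relation to the $p^m$-th power therefore exhibits $r$ in the Frobenius closure of $IR'$, where $R'$ is the integral closure of $R$ in $M$. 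Because $R$ is \emph{N-1}, its normalization $\widetilde R$ is finite over $R$ and normal, and since $M/K$ is finite separable, the classical finiteness of integral closure in separable extensions shows $R'$ is finite over $\widetilde R$, hence over $R$; in particular $R'$ is a normal — so \emph{N-1} — noetherian domain, with $(R')^+=R^+$. Applying part (1) to $R'$ and $IR'$ yields a finite generically \'etale $R'$-subalgebra $S$ of $R^+$ with $r\in (IR')S=IS$; since $M/K$ is separable, $S$ is finite and generically \'etale over $R$ and $S\subseteq (R^+)^{\s}$, so $r\in I(R^+)^{\s}\cap R$.

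\smallskip

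\noindent\emph{Main obstacle and remarks.} The heart of the matter is the construction in part (1): solving for a coefficient of $r$ as an \emph{integral} element forces one to extract a $p^e$-th root of something in $R$, which is intrinsically inseparable, so the difficulty is to do this separably. The resolution is that the perturbed polynomial $X^{p^e}+f_1^{p^e}X-a_i$ is separable, yet — precisely because the perturbation is divisible by $f_1^{p^e}$ — the single leftover coefficient $\beta$ is still integral over $R$. In part (2) the delicate point, and the only place the \emph{N-1} hypothesis is needed (weakening Singh's excellence assumption), is to guarantee that the auxiliary finite separable extension $R'$ over which we reapply part (1) is again noetherian; this uses the classical fact that the integral closure of a normal noetherian domain in a finite separable field extension is module finite. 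Alternatively one may avoid $R'$ altogether and run the construction of part (1) verbatim inside the (non-noetherian, but integrally closed and hence \emph{N-1}) domain $(R^+)^{\s}$, where the argument never invokes noetherianity.
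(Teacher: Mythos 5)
Your proposal is correct, but it takes a different route from the paper's write-up. The paper gives only an indication of proof: it cites Singh's construction verbatim (roots of Artin--Schreier-type polynomials, with $S$ taken to be the \emph{integral closure} of $R$ in the resulting separable extension $L$) and devotes its argument solely to the one point where Singh uses excellence, namely that this integral closure is module finite --- which it recovers from the \emph{N-1} hypothesis via the field trace and the fact that a generically finite solid extension of noetherian domains is finite. You instead reprove (1) from scratch: your $\theta_i$ are simple roots of $X^{p^e}+f_1^{p^e}X-a_i$ (separable since the derivative is the nonzero constant $f_1^{p^e}$), and the identity $\beta^{p^e}=a_1+\sum_{i\ge 2}\theta_i f_i^{p^e}$, which follows from $\theta_i^{p^e}=a_i-f_1^{p^e}\theta_i$ and $r^{p^e}=\sum_i a_if_i^{p^e}$, shows directly that $\beta$ is integral, so $S=R[\beta,\theta_2,\dots,\theta_n]$ is automatically module finite and no normalization is ever taken. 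The payoff is that your part (1) needs neither \emph{N-1} nor, in fact, noetherianity in any essential way, whereas the paper's version of (1) is exactly where \emph{N-1} is consumed; in your argument \emph{N-1} enters only in part (2), to make the integral closure $R'$ of $R$ in the maximal separable subfield $M$ module finite (the paper's (2) is the same reduction, phrased with the maximal separable subring $R_2$ of a finite extension $R_1$ rather than with $R'$). What the paper's approach buys is brevity and a precise localization of the needed change in Singh's published proof; what yours buys is a self-contained argument and a slight sharpening of the hypotheses in (1).
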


\begin{proof}[Indication of proof]
Singh proves (1) in \cite[Thm.\ 3.1]{Sin99(a)} assuming that $R$ is excellent. 
In the proof, he considers roots $u_1,\dots,u_n$ of certain Artin-Schreier polynomials
over $K$ and then takes $S$ to be the integral closure of $R$ in the fraction
field $L$ of $R[u_1,\dots,u_n]$. Note that $L$ is a finite separable extension
of $K$ by construction. The only place excellence appears to be used 
in the proof of \emph{loc.~cit} is to conclude that $S$ is a finite $R$-algebra. We 
claim this follows as long as $R$ is \emph{N-1}. Indeed, if $\overline{R}$
is the integral closure of $R$ in $K$, then $S$ is also the integral
closure of $\overline{R}$ in $L$. Since $\overline{R}$ is a normal noetherian
domain (it is module finite over $R$ by the \emph{N-1} hypothesis), $S$ is then
a finite $\overline{R}$-algebra by 
\cite[\href{https://stacks.math.columbia.edu/tag/032L}{Tag 032L}]{stacks-project}. 
The point here is that since $L/K$ is a finite separable extension, it admits
a nonzero trace $\Tr_{L/K}$ that restricts to give a nonzero $\overline{R}$-linear map 
$S \rightarrow \overline{R}$ because $\overline{R}$ is normal. However, 
any generically finite solid algebra extension of noetherian
domains is actually finite \cite[Prop.\ 3.7]{DS18}.
Consequently, $S$ is a finite $R$-algebra, and the rest
of the proof of \cite[Thm.\ 3.1]{Sin99(a)} applies without change.

Similarly, (2) is \cite[Cor.\ 3.4]{Sin99(a)}, but again where $R$ is assumed to be
excellent. As in \emph{loc.~cit.},
for $z \in IR^+ \cap R$ 
choose a finite $R$-subalgebra $R_1$ of $R^+$ such that
$z \in IR_1$. If $R_2$ is the largest separable extension of $R$ is $R_1$,
then $z \in (IR_2)^{[F]}$ because $R_2 \hookrightarrow R_1$ is purely
inseparable and module finite. 
Let $L = \Frac(R_2)$, which is a finite separable extension of $K$. 
Since $R$ is \emph{N-1}, the argument in the
previous paragraph shows that the integral closure of $R$ in $L$ is a finite
$R$-algebra. But this integral closure is also the integral 
closure of $R_2$ in $L = \Frac(R_2)$. Thus, $R_2$ is \emph{N-1}. Now by (1), one can
find a finite generically \'etale $R_2$-subalgebra of 
$(R_2)^+ = R^+$ such that $z \in IS$.
Then $R \hookrightarrow S$ is a finite generically \'etale extension, and we
are done.
\end{proof}

As a consequence, we obtain the following characterization of splinters in
prime characteristic for \emph{N-1} domains without any excellence or approximately Gorenstein
hypotheses.

\begin{corollary}{\normalfont (c.f. \cite[Cor.\ 3.9]{Sin99(a)})}
\label{cor:Singh-splinter-sep}
Let $R$ be a noetherian N-1 domain of prime characteristic $p>0$.
Then $R$ is a splinter if and only if $R \hookrightarrow S$ is cyclically pure
for every generically \'etale finite extension domain $S$.
\end{corollary}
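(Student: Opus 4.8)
The plan is to establish both directions by reducing plus closure to separable plus closure via Proposition \ref{prop:singh-separable}(2). First I would prove the forward direction, which does not even require the \emph{N-1} hypothesis: if $R$ is a splinter and $S$ is any finite extension domain of $R$, then since $R$ is a domain it admits an injection into $R^+$ over $R$ after choosing a compatible embedding of fraction fields, so $S$ can be identified with a finite $R$-subalgebra of $R^+$. By the splinter property, $R \hookrightarrow S$ splits, hence is pure, hence cyclically pure. This direction is essentially immediate from the definitions.

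For the converse, suppose $R \hookrightarrow S$ is cyclically pure for every generically \'etale finite extension domain $S$; I want to conclude $R$ is a splinter. By \cite[Lem.\ 2.3.1]{DT19} (cited in the excerpt via Proposition \ref{prop:splinter-ideal-properties} and Lemma \ref{lem:minimal-trace}(5)), it suffices to show $R \hookrightarrow R^+$ is cyclically pure, i.e.\ that for every ideal $I$ of $R$ one has $IR^+ \cap R = I$. Fix such an $I$ and let $z \in IR^+ \cap R$. The key step is to apply Proposition \ref{prop:singh-separable}(2) to rewrite $IR^+ \cap R = I(R^+)^{\s} \cap R$, so that $z \in I(R^+)^{\s} \cap R$; then $z$ lies in $I$ expanded to some \emph{finite} $R$-subalgebra of $(R^+)^{\s}$. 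More precisely, I would unravel the proof of Proposition \ref{prop:singh-separable}(2): there one first finds a finite $R$-subalgebra $R_1 \subseteq R^+$ with $z \in IR_1$, passes to the largest separable subextension $R_2$, notes $z \in (IR_2)^{[F]}$, and then invokes Proposition \ref{prop:singh-separable}(1) to produce a \emph{finite generically \'etale} $R$-subalgebra $S$ of $R^+$ with $z \in IS$. (This is where the \emph{N-1} hypothesis is genuinely needed, to ensure $S$ is module-finite over $R$.) So: given $z \in IR^+ \cap R$, there is a generically \'etale finite extension domain $S \supseteq R$ with $z \in IS$, hence $z \in IS \cap R$.

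Now the hypothesis enters: $R \hookrightarrow S$ is cyclically pure, so $IS \cap R = I$, whence $z \in I$. Since $z \in IR^+ \cap R$ was arbitrary, $IR^+ \cap R = I$; since $I$ was arbitrary, $R \hookrightarrow R^+$ is cyclically pure, and therefore $R$ is a splinter by \cite[Lem.\ 2.3.1]{DT19}. The main obstacle — and the reason this statement is not entirely formal — is controlling the \emph{finiteness} of the auxiliary extension $S$ built from Artin--Schreier roots: a priori the integral closure of a noetherian domain in a finite separable field extension need not be module-finite, and one must run the argument indicated after Proposition \ref{prop:singh-separable} (pass to the normalization $\overline{R}$, use that it is \emph{N-1} hence normal noetherian, and that a separable extension of a normal noetherian domain has module-finite integral closure, or alternatively invoke \cite[Prop.\ 3.7]{DS18} that a generically finite solid extension of noetherian domains is finite). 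Apart from that point, every step is a direct application of results already in hand.
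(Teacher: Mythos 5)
Your proof is correct and follows essentially the same route as the paper: both directions reduce the converse to cyclic purity of $R \hookrightarrow R^+$ via \cite[Lem.\ 2.3.1]{DT19}, using Proposition \ref{prop:singh-separable}(2) (where the \emph{N-1} hypothesis enters) to pass from generically \'etale finite extensions to all of $R^+$. The only cosmetic difference is that you argue element-by-element with a single auxiliary generically \'etale finite extension for each $z \in IR^+ \cap R$, whereas the paper phrases the same step as cyclic purity of $R \rightarrow (R^+)^{\s}$, viewed as a filtered union of generically \'etale finite $R$-subalgebras.
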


\begin{proof}
The `if' implication is the non-trivial one. If $R \hookrightarrow S$ is 
cyclically pure for every generically \'etale finite extension domain
$S$, then $R \rightarrow (R^+)^{\s}$ is cyclically pure because $(R^+)^{\s}$ is a
filtered union of generically \'etale finite $R$-subalgebras. Now since $R$ is
\emph{N-1}, by Proposition 
\ref{prop:singh-separable}(2), $R \rightarrow R^+$ is cyclically pure. 
Then $R$ is a splinter by \cite[Lem.\ 2.3.1]{DT19}. 

Again, Singh proves the same result assuming $R$ is excellent \cite[Cor.\ 3.9]{Sin99(a)} and that $R$ is a direct summand of every generically \'etale 
finite extension domain $S$.
\end{proof}

Motivated by Corollary \ref{cor:Singh-splinter-sep} we introduce the
following definition.

\begin{definition}
\label{def:separable-trace}
Let $R$ be a noetherian domain and $\mathcal{C}^{\s}$ be the collection of 
generically \'etale finite $R$-subalgebras of $R^+$. We define the 
\emph{separable trace of $R$}, denoted $\uptau^{\s}_R$, to be
\[
\uptau^{\s}_R \coloneqq \bigcap_{S \in \mathcal{C}^{\s}} \uptau_{S/R}.
\]
The \emph{separable ideal trace} of $R$, denoted $T^{\s}_R$, is
\[
T^{\s}_R \coloneqq \bigcap_{S \in \mathcal{C}^{\s}} T_{S/R}.
\]
\end{definition}

\begin{remark}
\label{rem:separable-trace-observations}
{\*}
\begin{enumerate}
    \item If $\Frac(R)$ has characteristic $0$, then $\uptau^{\s}_R = \uptau_R$
    and $T^{\s}_R = T_R$.
    \item If $\Frac(R)$ has characteristic $p > 0$, we have $\uptau_R \subseteq \uptau^{\s}_R$ and $T_R \subseteq T^{\s}_R$.
    \item In general, $\uptau^{\s}_R \subseteq T^{\s}_R$. 
    \item Since $\mathcal{C}^{\s}$ is a filtered poset under inclusion,
    the same argument as in Lemma \ref{lem:minimal-trace} demonstrates that
    \[
    \Sigma^{\s}_\uptau \coloneqq \{\uptau_{S/R} \colon S \in \mathcal{C}^{\s}\}
    \]
    and
    \[
    \Sigma^{\s}_T \coloneqq \{T_{S/R} \colon S \in \mathcal{C}^{\s}\}
    \]
    are cofiltered collections of ideals of $R$ under inclusion. 
    In particular, if $\Sigma^{\s}_\uptau$ (resp. $\Sigma^{\s}_T$) has a minimal element, 
    then it has a smallest element.
\end{enumerate}
\end{remark}

We then have the following analogue of Proposition \ref{prop:splinter-ideal-properties}. We state it for noetherian domains of prime
characteristic because for mixed characteristic and equal characteristic
$0$ domains, the separable trace provides no new information over the usual trace.

\begin{proposition}
\label{prop:sep-splinter-ideal-properties}
Let $R$ be a noetherian N-1 domain of prime characteristic $p > 0$.  
Let $\mathcal{C}^{\s}$ be the 
collection of generically \'etale finite $R$-subalgebras of $R^+$. 
Then we have the following:
\begin{enumerate}
    \item $R$ is a splinter $\Longleftrightarrow \uptau^{\s}_R = R
    \Longleftrightarrow T^{\s}_R = R$.
    \item $T^{\s}_{R} = T_{(R^+)^{\s}/R}$. 
    \item If $R$ is approximately Gorenstein, then $\uptau^{\s}_R
    = T^{\s}_R$ and $\uptau^{\s}_R$ is the big separable plus closure
    test ideal.
    \item If $(R,\mathfrak m)$ is complete local, 
    then 
    $\uptau^{\s}_R = \uptau_{(R^+)^{\s}/R}$.
\end{enumerate}
Assume $\Sigma^{\s}_\uptau \coloneqq 
\{\uptau_{S/R} \colon S \in \mathcal{C}^{\s}\}$
has a smallest element under inclusion. Then:
\begin{enumerate}
\setcounter{enumi}{4}
    \item There exists $S_0 \in \mathcal{C}^{\s}$
    such that $\uptau^{\s}_R = \uptau_{S_0/R}$.
    \item If $R$ is approximately Gorenstein, there exists $B_0 
    \in \mathcal{C}^{\s}$ such that 
    $T^{\s}_R = T_{S_0/R} = \uptau_{S_0/R}$.
     \item If $(R, \mathfrak m)$ is complete local, 
    there exists $S_0 \in \mathcal{C}^{\s}$ such that $\uptau_{(R^+)^{\s}/R} = 
    \uptau_{S_0/R} = T_{S_0/R} = T_{(R^+)^{\s}/R}$.
    \item If $\mathfrak{p} \in \Spec(R)$, then $\uptau^{\s}_{R_\mathfrak{p}} = (\uptau^{\s}_R)_{\mathfrak{p}}$.
    \item For $\mathfrak{p} \in \Spec(R)$, $R_{\mathfrak p}$ is a 
    splinter if and only if $\uptau^{\s}_R \nsubseteq \mathfrak{p}$. Thus, the splinter locus of $R$ is the complement in $\Spec(R)$ of $\mathbf{V}(\uptau^{\s}_R)$.
    \item There exists a finite generically \'etale $R$-subalgebra $S_0$ of $R^+$ such that if
    $R \hookrightarrow S_0$ splits, then $R$ is a splinter.
\end{enumerate}
\end{proposition}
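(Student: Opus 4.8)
The plan is to run the proof of Proposition~\ref{prop:splinter-ideal-properties} essentially verbatim, making three systematic substitutions: replace $R^{+}$ by the separable absolute integral closure $(R^{+})^{\s}$, replace ``finite $R$-subalgebra of $R^{+}$'' by ``generically \'etale finite $R$-subalgebra of $R^{+}$'' (that is, work inside $\mathcal{C}^{\s}$), and replace the characterization of the splinter property via cyclic purity of $R \hookrightarrow R^{+}$ (\cite[Lem.\ 2.3.1]{DT19}) by Singh's separable refinement, Corollary~\ref{cor:Singh-splinter-sep}. Two structural facts will make this transcription legitimate: first, $(R^{+})^{\s}$ is the filtered union of the members of $\mathcal{C}^{\s}$; second, clearing denominators converts a generically \'etale finite $R_{\mathfrak p}$-subalgebra of $(R^{+})_{\mathfrak p} = (R_{\mathfrak p})^{+}$ into a generically \'etale finite $R$-subalgebra of $(R^{+})^{\s}$ whose localization at $\mathfrak p$ recovers the original algebra (here one uses that scaling a separable element of $\Frac(R^{+})$ by an element of $\Frac(R)$ leaves it separable, and that the N-1 property descends from $R$ to $R_{\mathfrak p}$). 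The cofiltered/cofinality bookkeeping needed has already been recorded in Remark~\ref{rem:separable-trace-observations}.

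For part (1): one always has $\uptau^{\s}_{R} \subseteq T^{\s}_{R}$ by Remark~\ref{rem:separable-trace-observations}(3); if $R$ is a splinter then $\uptau_{S/R} = R$ for \emph{every} finite extension domain $S$, hence $\uptau^{\s}_{R} = R$ and so $T^{\s}_{R} = R$; conversely, if $T^{\s}_{R} = R$ then $T_{S/R} = R$ for every $S \in \mathcal{C}^{\s}$, which by Lemma~\ref{lem:contraction-ideals}(1) says $R \hookrightarrow S$ is cyclically pure for every generically \'etale finite extension domain $S$, and Corollary~\ref{cor:Singh-splinter-sep} then gives that $R$ is a splinter. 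Parts (2)--(4) will copy the proofs of Proposition~\ref{prop:splinter-ideal-properties}(2)--(4): in (2), $T_{(R^{+})^{\s}/R} \subseteq T^{\s}_{R}$ is immediate from $S \subseteq (R^{+})^{\s}$, while for the reverse one pulls $z \in I(R^{+})^{\s} \cap R$ back to $IS \cap R$ for some $S \in \mathcal{C}^{\s}$ using the first structural fact; in (3), $R$ approximately Gorenstein gives $\uptau_{S/R} = T_{S/R}$ for every finite $S$ by Corollary~\ref{cor:ideal-traces-approx-gor}(1), so $\uptau^{\s}_{R} = T^{\s}_{R} = T_{(R^{+})^{\s}/R}$, and Proposition~\ref{prop:algebra-closure}(4) identifies the last ideal with $\uptau_{\cl_{(R^{+})^{\s}}}(R)$, which is by definition the big separable plus closure test ideal; in (4), a complete local domain is reduced and excellent hence approximately Gorenstein, so (2) and (3) apply, and Corollary~\ref{cor:ideal-traces-approx-gor}(3) gives $\uptau_{(R^{+})^{\s}/R} = T_{(R^{+})^{\s}/R}$.

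For parts (5)--(10) I would work under the hypothesis that $\Sigma^{\s}_{\uptau}$ has a smallest element, which is necessarily of the form $\uptau_{S_{0}/R}$ for some $S_{0} \in \mathcal{C}^{\s}$, equals $\uptau^{\s}_{R}$, and satisfies $\uptau_{S/R} = \uptau_{S_{0}/R}$ for all $S \in \mathcal{C}^{\s}$ with $S \supseteq S_{0}$. Then (5) is immediate, (6) combines (3), (5) and Corollary~\ref{cor:ideal-traces-approx-gor}(1), and (7) combines (2), (3), (4), (6). For (8): $R_{\mathfrak p}$ is again a noetherian N-1 domain; given $T \in \mathcal{C}^{\s}(R_{\mathfrak p})$, use the second structural fact to write $T = (T')_{\mathfrak p}$ with $T' \in \mathcal{C}^{\s}(R)$, and after replacing $T'$ by $T'[S_{0}] \in \mathcal{C}^{\s}(R)$ assume $T' \supseteq S_{0}$; then Lemma~\ref{lem:trace-ideals}(3) and the stabilization give $\uptau_{T/R_{\mathfrak p}} = (\uptau_{T'/R})_{\mathfrak p} = (\uptau_{S_{0}/R})_{\mathfrak p}$, so $\uptau^{\s}_{R_{\mathfrak p}} = (\uptau_{S_{0}/R})_{\mathfrak p} = (\uptau^{\s}_{R})_{\mathfrak p}$ by part (5). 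Part (9) then follows by applying part (1) to the N-1 domain $R_{\mathfrak p}$ together with (8): $R_{\mathfrak p}$ is a splinter iff $\uptau^{\s}_{R_{\mathfrak p}} = R_{\mathfrak p}$ iff $(\uptau^{\s}_{R})_{\mathfrak p} = R_{\mathfrak p}$ iff $\uptau^{\s}_{R} \nsubseteq \mathfrak p$. Part (10) follows by taking $S_{0}$ as in (5): if $R \hookrightarrow S_{0}$ splits then $\uptau_{S_{0}/R} = R$, hence $\uptau^{\s}_{R} = R$ and $R$ is a splinter by (1).

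The content is concentrated in part (1): its nontrivial ``if'' direction is exactly Singh's theorem in the guise of Corollary~\ref{cor:Singh-splinter-sep}, and without it $T^{\s}_{R} = R$ would only yield cyclic purity of $R$ in all \emph{separable} finite extensions --- it is Proposition~\ref{prop:singh-separable}(2) that upgrades this to cyclic purity of $R \hookrightarrow R^{+}$ under the N-1 hypothesis. The only other place needing care is the clearing-denominators construction in part (8), where one must check both that scaling the generators keeps them separable over $\Frac(R)$ (so the new algebra still lies in $(R^{+})^{\s}$) and that the construction is compatible with the smallest-element hypothesis; everything else is a routine formal transcription of the non-separable case.
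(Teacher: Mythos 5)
Your proposal is correct and takes essentially the same route as the paper's own sketch: transcribe Proposition \ref{prop:splinter-ideal-properties} with $R^+$ replaced by $(R^+)^{\s}$ and $\mathcal{C}$ by $\mathcal{C}^{\s}$, with Corollary \ref{cor:Singh-splinter-sep} supplying part (1) and the separability-preserving clearing-of-denominators argument handling part (8). One small nit: in (8), after enlarging $T'$ to $T'[S_0]$ you only obtain the containment $(\uptau_{S_0/R})_{\mathfrak p} \subseteq \uptau_{T/R_{\mathfrak p}}$ rather than the stated equality, but this containment, together with $(S_0)_{\mathfrak p} \in \mathcal{C}^{\s}(R_{\mathfrak p})$, is all the intersection argument needs, so the conclusion stands.
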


\begin{proof}[Sketch of proof]
(1) follows by Corollary \ref{cor:Singh-splinter-sep}.

(2) follows
using the proof of Proposition  \ref{prop:splinter-ideal-properties}(2) verbatim,
but with $A$ (resp. $A^+$)
replaced by $R$ (resp. $(R^+)^{\s}$).

(3) The equality $\uptau^{\s}_R = T^{\s}_R$ follows by
Corollary \ref{cor:ideal-traces-approx-gor}(1). Then $\uptau^{\s}_R$
is the big separable plus closure test ideal because it equals
$T_{(R^+)^{\s}/R}$ using (2), and the latter ideal is the
big separable plus closure test ideal by
Proposition \ref{prop:algebra-closure}(4)
applied to $B = (R^+)^{\s}$.

(4) A complete local domain is approximately Gorenstein. Therefore
$\uptau^{\s}_R = T^{\s}_R = T_{(R^+)^{\s}/R}$ by (2) and (3), and
$T_{(R^+)^{\s}/R} = \uptau_{(R^+){\s}/R}$ by
Corollary \ref{cor:ideal-traces-approx-gor}(3) applied to $B = (R^+)^{\s}$,
which is a solid $R$-algebra because $R^+$ is a solid $R$-algebra.

(5) follows because $\Sigma^{\s}_\uptau$ has a smallest element.

(6) follows by (2), (3), (5) and Corollary \ref{cor:ideal-traces-approx-gor}(1) 
because $\uptau_{S_0/R} = T_{S_0/R}$.

(7) follows by (2), (3), (4) and (6).

(8) follows using the same line of reasoning as in 
Proposition \ref{prop:splinter-ideal-properties}(8). 
The key point is that
if $\Sigma^{\s}_\uptau$ has a smallest element, then the a priori infinite
intersection
\[
\uptau^{\s}_R = \bigcap_{S \in \mathcal{C}^{\s}} \uptau_{S/R}
\]
behaves like a finite intersection, and hence it commutes with localization at $\mathfrak p$,
giving
\[
(\uptau^{\s}_R)_{\mathfrak p} = 
\bigcap_{S \in \mathcal{C}^{\s}} \uptau_{S_{\mathfrak p}/R_{\mathfrak p}}.
\]
One can then show $\bigcap_{S \in \mathcal{C}^{\s}} \uptau_{S_{\mathfrak p}/R_{\mathfrak p}} = 
\uptau^{\s}_{R_{\mathfrak p}}$
by a similar spreading out argument. Indeed, the $R$-subalgebra $T'$ of 
$T$ constructed in the proof of 
Proposition \ref{prop:splinter-ideal-properties}(8) will
be generically \'etale if $T$ is a generically \'etale finite 
$R_{\mathfrak p}$-subalgebra of $((R_{\mathfrak p})^+)^{\s} = ((R^+)^{\s})_{\mathfrak p}$.

(9) follows from (8) and (1) and because the property of
being \emph{N-1} localizes 
\cite[\href{https://stacks.math.columbia.edu/tag/032G}{Tag 032G}]{stacks-project}. 

Finally for (10), any $S_0$ satisfying the conclusion
of (5) works by (1).
\end{proof}

We obtain the following non-obvious consequence of the previous 
results.

\begin{corollary}
\label{cor:the-two-traces}
Let $R$ be a noetherian N-1 domain of prime 
characteristic $p > 0$ that is approximately Gorenstein.
Then
\[
\uptau_R = \uptau^{\s}_R.
\]
\end{corollary}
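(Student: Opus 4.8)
The plan is to reduce the equality $\uptau_R = \uptau^{\s}_R$ to the single ideal-theoretic identity $T_{R^+/R} = T_{(R^+)^{\s}/R}$, which is then immediate from Singh's result recorded in Proposition \ref{prop:singh-separable}(2). First, since $\mathcal{C}^{\s} \subseteq \mathcal{C}$, the containment $\uptau_R \subseteq \uptau^{\s}_R$ is automatic from the definitions of these ideals as intersections of traces $\uptau_{S/R}$; so all the work lies in the reverse containment.

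Next I would translate everything into ideal traces, where the approximately Gorenstein hypothesis does its job. By Proposition \ref{prop:splinter-ideal-properties}(2) and (3), the hypothesis that $R$ is approximately Gorenstein gives $\uptau_R = T_R = T_{R^+/R}$. Symmetrically, Proposition \ref{prop:sep-splinter-ideal-properties}(2) and (3) give $\uptau^{\s}_R = T^{\s}_R = T_{(R^+)^{\s}/R}$. Hence it suffices to show $T_{R^+/R} = T_{(R^+)^{\s}/R}$.

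Finally, unravelling the definition $T_{B/R} = \bigcap_I (I : IB \cap R)$ (the intersection over all ideals $I$ of $R$), the identity $T_{R^+/R} = T_{(R^+)^{\s}/R}$ reduces termwise to the assertion $IR^+ \cap R = I(R^+)^{\s} \cap R$ for every ideal $I$. This is precisely Proposition \ref{prop:singh-separable}(2), which applies because $R$ is \emph{N-1}; so $(I : IR^+ \cap R) = (I : I(R^+)^{\s} \cap R)$ for every $I$, and intersecting over all $I$ yields $T_{R^+/R} = T_{(R^+)^{\s}/R}$. Chaining the equalities together gives $\uptau_R = T_{R^+/R} = T_{(R^+)^{\s}/R} = \uptau^{\s}_R$, as desired.

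I do not expect a genuine obstacle: the only nontrivial input, Proposition \ref{prop:singh-separable}(2), is already available, and everything else is bookkeeping with the two propositions on (separable) splinter ideals. The one point requiring care is that the approximately Gorenstein hypothesis is genuinely used — it is what lets us pass between the trace $\uptau_{S/R}$ and the ideal trace $T_{S/R}$ (via Corollary \ref{cor:ideal-traces-approx-gor}(1), invoked inside Propositions \ref{prop:splinter-ideal-properties} and \ref{prop:sep-splinter-ideal-properties}); without it one is left only with the trivial containment $\uptau_R \subseteq \uptau^{\s}_R$.
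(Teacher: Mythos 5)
Your proposal is correct and follows essentially the same route as the paper: both pass from $\uptau_R$ and $\uptau^{\s}_R$ to the ideal traces $T_{R^+/R}$ and $T_{(R^+)^{\s}/R}$ via the approximately Gorenstein hypothesis (Propositions \ref{prop:splinter-ideal-properties}(2),(3) and \ref{prop:sep-splinter-ideal-properties}(2),(3)), and then identify these termwise using Proposition \ref{prop:singh-separable}(2), which is where the \emph{N-1} hypothesis enters. Your remarks about the trivial containment and the role of the approximately Gorenstein assumption are accurate but not needed beyond the paper's argument.
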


\begin{proof}
By Proposition \ref{prop:splinter-ideal-properties} parts (2) and (3)
and Proposition \ref{prop:sep-splinter-ideal-properties} parts 
(2) and (3),
we have
\[
\textrm{$\uptau_R = T_{R^+/R}$ \hspace{2mm} and \hspace{2mm}
$\uptau^{\s}_R = T_{(R^+)^{\s}/R}$}.
\]
Now by Proposition \ref{prop:singh-separable}(2),
\[
T_{R^+/R} = \bigcap_I (I: IR^+ \cap R) = 
\bigcap_I (I: I(R^+)^{\s} \cap R) = T_{(R^+)^{\s}/R},
\]
where the intersections range over all ideals $I$ of $R$. This completes the proof.
\end{proof}

\subsection{Openness of splinter loci in prime characteristic} 
We will now show that the splinter locus is open for schemes in prime 
characteristic that are of most interest in arithmetic and geometry. 
In particular, we will show that the splinter locus of any scheme of finite
type over an excellent local ring of prime characteristic is open.
In fact, our results will hold more generally for some quasi-excellent 
schemes and even some schemes that are not quasiexcellent (see Remarks 
\ref{rem:weaking-formal-fibers} and \ref{rem:open-F-pure-loci}). 
Recall that a noetherian ring $R$ is \emph{quasi-excellent} if
the local rings of $R$ have geometrically regular formal fibers and if 
the regular locus of any finite type $R$-algebra is open. Thus an
excellent ring is a quasi-excellent ring that is universally catenary.
Our first result is affine in nature.

\begin{theorem}
\label{thm:splinter-F-split}
Let $R$ be a noetherian $F$-pure domain of prime characteristic $p > 0$ and assume
that $R$ satisfies any of the following conditions:
\begin{enumerate}
    \item[(i)] $R$ is $F$-finite.
    \item[(ii)] $R$ is local (not necessarily excellent).
    \item[(iii)] $(A, \mathfrak m)$ is a noetherian local ring of prime 
    characteristic 
    $p > 0$ with geometrically regular formal 
    fibers and $R$ is essentially of finite type over $A$.
    
\end{enumerate}
Let $\mathcal{C}$ be the collection of finite $R$-subalgebras of $R^+$. 
Then we have the following:
\begin{enumerate}
    \item  $\Sigma_\uptau \coloneqq \{\uptau_{S/R}: S \in \mathcal{C}\}$ is a finite set of radical
    ideals of $R$.
    \item The splinter locus $\Spl(R)$ of $\Spec(R)$ is open and its
    complement is $\mathbf{V}(\uptau_R) = \mathbf{V}(\uptau^{\s}_R)$.
    \item $\uptau_R$ and $\uptau^{\s}_R$ are radical ideals and $\uptau_R = \uptau^{\s}_R$.
    \item There exists a finite generically \'etale extension domain $S$ of $R$
    such that $\uptau_R = \uptau^{\s}_R = \uptau_{S/R}$.
    \item There exists a finite generically \'etale extension domain $S$ of $R$
    such that if $R \hookrightarrow S$ splits, then $R$ is a splinter.
    \item If R is complete local,  
    there exists a finite generically \'etale $R$-subalgebra $S$ 
    of $R^+$
    such that
    $
    \uptau_{R^+/R} = \uptau_R = \uptau^{\s}_R = \uptau_{(R^+)^{\s}/R} =
    \uptau_{S/R}.
    $
\end{enumerate}
\end{theorem}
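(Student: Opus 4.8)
The strategy is to concentrate all the work in Part (1) — that $\Sigma_\uptau = \{\uptau_{S/R}: S \in \mathcal{C}\}$ is a finite set of radical ideals — and then read off (2)--(6) from the framework of Section \ref{sec:splinter-locus}. The preliminary observations are that every $\uptau_{S/R}$ is uniformly $F$-compatible (Lemma \ref{lem:trace-ideals}(2)) and nonzero (Lemma \ref{lem:minimal-trace}(1)), and that $\Sigma_\uptau$ is cofiltered under inclusion (Lemma \ref{lem:minimal-trace}(2)); hence as soon as $\Sigma_\uptau$ is finite it has a smallest element $\uptau_{S_0/R} = \uptau_R$ (Lemma \ref{lem:minimal-trace}(4)), which is exactly the hypothesis under which Propositions \ref{prop:splinter-ideal-properties} and \ref{prop:sep-splinter-ideal-properties} operate. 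I also record that $R$, being an $F$-pure domain, is approximately Gorenstein (its localizations at maximal ideals are $F$-pure, hence so are their completions, hence those completions are reduced, i.e.\ $R$ is formally reduced) and is \emph{N-1} (locally by \cite[Lem.\ 7.1.4]{DMS20}, and globally in each of the three listed situations), so that Proposition \ref{prop:sep-splinter-ideal-properties} and the identity $\uptau_R = \uptau^{\s}_R$ of Corollary \ref{cor:the-two-traces} are available.

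For Part (1) I would dispatch the first two cases by reducing to the finiteness of uniformly $F$-compatible ideals. In case (i), an $F$-finite $F$-pure ring is Frobenius split, since the pure inclusion $R \hookrightarrow F_*R$ has finitely presented cokernel and therefore splits; then Proposition \ref{prop:finiteness-F-compatible}(1) and Lemma \ref{lem:F-compatible-properties}(5) give that $R$ has only finitely many uniformly $F$-compatible ideals and they are all radical, and $\Sigma_\uptau$ lies inside this set. In case (ii) one completes: $\widehat{R}$ is $F$-pure complete local ($F$-purity ascends along the faithfully flat map $R \to \widehat{R}$), so by Corollary \ref{cor:complete-local} it has finitely many uniformly $F$-compatible ideals, all radical; by flat base change for traces (Lemma \ref{lem:trace-ideals}(4)), $\uptau_{S/R}\widehat{R} = \uptau_{(S\otimes_R \widehat{R})/\widehat{R}}$ is uniformly $F$-compatible in $\widehat{R}$, while faithful flatness makes $I \mapsto I\widehat{R}$ injective on ideals and reflects radicality, so $\Sigma_\uptau$ is again finite and radical.

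Case (iii) is the "version with parameters'' and is where I expect the real difficulty. For each maximal ideal $\mathfrak{m}$ of $R$, the localization $R_{\mathfrak{m}}$ inherits geometrically regular formal fibers, so $\widehat{R_{\mathfrak{m}}}$ is $F$-pure complete local; moreover $(\uptau_{S/R})_{\mathfrak{m}} = \uptau_{S_{\mathfrak{m}}/R_{\mathfrak{m}}}$ (Lemma \ref{lem:trace-ideals}(3)) expands to the uniformly $F$-compatible ideal $\uptau_{(S_{\mathfrak{m}}\otimes_{R_{\mathfrak{m}}}\widehat{R_{\mathfrak{m}}})/\widehat{R_{\mathfrak{m}}}}$ of $\widehat{R_{\mathfrak{m}}}$, which is radical, so $\uptau_{S/R}$ is radical (checked locally). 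The number of uniformly $F$-compatible ideals of $\widehat{R_{\mathfrak{m}}}$ is bounded, via the multiplicity estimate of Proposition \ref{prop:multiplicity-bound} and the counting argument in Proposition \ref{prop:finiteness-F-compatible}(2), by a constant depending only on $\dim R$ and on a bound for the embedding dimension $\dim_{\kappa(\mathfrak{m})}\mathfrak{m}R_{\mathfrak{m}}/\mathfrak{m}^2 R_{\mathfrak{m}}$; since $R$ is essentially of finite type over the \emph{local} ring $A$, this embedding dimension is bounded uniformly in $\mathfrak{m}$. The hard part is the passage from this uniform local bound to the finiteness of $\Sigma_\uptau$ as a set of ideals of $R$: $R$ may have infinitely many maximal ideals, and, absent $F$-finiteness, uniform $F$-compatibility does not visibly localize or complete, so one must argue directly with the trace ideals (which do localize and base-change, Lemma \ref{lem:trace-ideals}(3),(4)), combining the uniform local bound with the noetherianity of $\Spec R$ — the closed sets $\mathbf{V}(\uptau_{S/R})$ form a poset with a maximal element, so the radical ideals $\uptau_{S/R}$ stabilize — and with a noetherian induction on $\dim R$, using that $\uptau_R \neq 0$ for a domain so that passing to the stable trace strictly drops the dimension.

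Granting Part (1), the remaining statements are bookkeeping. Finiteness plus cofilteredness gives $\uptau_R = \uptau_{S_0/R}$ for some $S_0 \in \mathcal{C}$ (Lemma \ref{lem:minimal-trace}(4)); since $\mathcal{C}^{\s} \subseteq \mathcal{C}$, the set $\Sigma^{\s}_\uptau$ is a finite subset of $\Sigma_\uptau$ and likewise has a smallest element. Part (2) is then Proposition \ref{prop:splinter-ideal-properties}(9) together with Proposition \ref{prop:sep-splinter-ideal-properties}(9). For Part (3), $\uptau_R$ and $\uptau^{\s}_R$ coincide with those smallest elements, hence are radical, and $\uptau_R = \uptau^{\s}_R$ by Corollary \ref{cor:the-two-traces}. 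Part (4) follows from Proposition \ref{prop:sep-splinter-ideal-properties}(5), which produces $S \in \mathcal{C}^{\s}$ (hence generically \'etale) with $\uptau^{\s}_R = \uptau_{S/R}$, combined with $\uptau_R = \uptau^{\s}_R$; Part (5) is Proposition \ref{prop:sep-splinter-ideal-properties}(10); and Part (6), in the complete local case, is obtained by chaining Proposition \ref{prop:splinter-ideal-properties}(4) with Proposition \ref{prop:sep-splinter-ideal-properties}(4),(7) and Part (3).
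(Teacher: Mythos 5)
Your treatment of cases (i) and (ii), and the bookkeeping deducing (2)--(6) from Lemma \ref{lem:minimal-trace}, Propositions \ref{prop:splinter-ideal-properties} and \ref{prop:sep-splinter-ideal-properties}, and Corollary \ref{cor:the-two-traces}, matches the paper's argument (one small caveat: in case (ii) the $F$-purity of $\widehat{R}$ is \emph{not} an instance of ``$F$-purity ascends along faithfully flat maps'' --- that is false in general, as the fibers can be arbitrary; for completions it is a specific theorem, \cite[Cor.\ 6.13]{HR74}, which is what the paper invokes). The genuine gap is in case (iii), which you yourself flag as ``the hard part'' and then do not close. Your proposed patch fails at both of its hinges. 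First, the claim that ``the closed sets $\mathbf{V}(\uptau_{S/R})$ form a poset with a maximal element, so the radical ideals $\uptau_{S/R}$ stabilize'' is unsupported: noetherianity of $\Spec(R)$ gives the descending chain condition on closed sets, not the ascending one, and a cofiltered (descending) family of radical ideals in a noetherian ring need not stabilize (e.g.\ the principal ideals generated by squarefree polynomials $x(y-1)\cdots(y-n)$ in $k[x,y]$ form a strictly decreasing chain of radical ideals). The stabilization of the $\uptau_{S/R}$ \emph{is} the content of part (1), so this step is circular. Second, the ``noetherian induction on $\dim R$ using $\uptau_R \neq 0$'' is not an argument: $\uptau_R$ is an a priori infinite intersection whose nonvanishing is not available before finiteness is known (cf.\ Proposition \ref{prop:nonzero-splinter-ideal}, where nonvanishing is tied to the existence of test elements), and no inductive scheme reducing to smaller dimension is actually described. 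The uniform bound on the number of uniformly $F$-compatible ideals of each $\widehat{R_{\mathfrak m}}$ does not help either, since $R$ has infinitely many maximal ideals and two distinct traces can agree after localizing at any single one.

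What is missing is the paper's key idea for case (iii): do not localize at each maximal ideal, but instead build a \emph{single} faithfully flat cover $R \to R' \coloneqq R \otimes_A \widehat{A}^{\Gamma}$ with $R'$ noetherian, $F$-finite and Frobenius split. The geometric regularity of the formal fibers of $A$ makes $A \to \widehat{A}$, hence $R \to R \otimes_A \widehat{A}$, a regular map, so by the Radu--Andr\'e theorem the relative Frobenius is faithfully flat and $F$-purity ascends to $R \otimes_A \widehat{A}$; Murayama's gamma construction then produces $\widehat{A}^{\Gamma}$, $F$-finite, with $R \otimes_A \widehat{A}^{\Gamma}$ still $F$-pure, hence Frobenius split because it is $F$-finite. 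Since each $S \in \mathcal{C}$ is a finite $R$-module, Lemma \ref{lem:trace-ideals}(4) gives $\uptau_{S/R}R' = \uptau_{S \otimes_R R'/R'}$, a trace ideal and hence a uniformly $F$-compatible ideal of the Frobenius split $F$-finite ring $R'$; these are finite in number and radical by Proposition \ref{prop:finiteness-F-compatible} and Lemma \ref{lem:F-compatible-properties}, and contracting along the faithfully flat map $R \to R'$ recovers $\uptau_{S/R}$, giving finiteness and radicality of $\Sigma_{\uptau}$. This is exactly the mechanism your case (ii) argument uses with $\widehat{R}$ in place of $R'$; without an analogue of $R'$ in case (iii), your proof does not go through.
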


\begin{proof}
If $R$ satisfies (i), (ii) or (iii), we first claim that $R$ is approximately Gorenstein and \emph{N-1}.

By definition, $R$ is approximately Gorenstein if $R_\mathfrak{m}$ is 
approximately Gorenstein for all maximal ideals $\mathfrak{m}$ of $R$. If $R$ is
$F$-pure, so is $R_{\mathfrak m}$. Then $R_{\mathfrak m}$ is approximately Gorenstein
by \cite[Cor.\ 3.6(ii)]{DM19}, where it is shown more generally that $F$-injective
noetherian rings are approximately Gorenstein.

A quasi-excellent domain is Nagata \cite[\href{https://stacks.math.columbia.edu/tag/07QV}{Tag 07QV}]{stacks-project}, hence \emph{N-1}. Thus, the $\emph{N-1}$ property follows when $R$ is
$F$-finite because $F$-finite rings are excellent \cite[Thm.\ 2.5]{Kun76}. 
If $(A, \mathfrak m)$ has geometrically regular formal fibers, then $A$ is quasi-excellent
\cite[(33.D), Thm.\ 3.6]{Mat80} (or see \cite[Prop.\ 5.5.1]{ILO14}). Therefore if $R$ is essentially of finite type
over $A$, then $R$ is also quasi-excellent 
\cite[\href{https://stacks.math.columbia.edu/tag/07QU}{Tag 07QU}]{stacks-project}, hence \emph{N-1}. Finally, if $(R,\mathfrak m)$
is local and $F$-pure, then $R$ is \emph{N-1} by
\cite[Lem.\ 7.1.4]{DMS20}.

(1) An $F$-finite noetherian $F$-pure ring is Frobenius split. 
By Proposition \ref{prop:finiteness-F-compatible},
the collection of uniformly $F$-compatible ideals is finite in 
case (i) and also in case (ii) when $(R, \mathfrak m)$ is additionally Frobenius split, and by Lemma \ref{lem:F-compatible-properties},
every uniformly $F$-compatible is radical when $R$ is Frobenius split. 
Let $\mathcal C$ be the collection of finite
$R$-subalgebras of $R^+$ and $\mathcal{C}^{\s}$ be the subset of $\mathcal C$ consisting
of those $R$-subalgebras that are also generically \'etale. Then
\[
\Sigma_\uptau \coloneqq \{\uptau_{S/R} \colon S \in \mathcal{C}\}
\]
and
\[
\Sigma^{\s}_\uptau \coloneqq \{\uptau_{S/R} \colon S \in \mathcal{C}^{\s}\}
\]
are collections of uniformly $F$-compatible ideals by Lemma \ref{lem:trace-ideals}. In particular,
both $\Sigma$ and $\Sigma^{\s}$ are finite sets of radical ideals 
in case (i) and in case (ii)
when $(R, \mathfrak m)$ is Frobenius split. We will now show that 
$\Sigma_\uptau$ (and hence $\Sigma^{\s}_\uptau$) is also a finite set of radical
ideals in case (iii) and also in case (ii) when we drop the hypothesis that $(R, \mathfrak m)$ is Frobenius split. Note that in the
generality of (ii) and (iii), and $F$-pure noetherian domain $R$ need not be Frobenius split (or admit any 
nonzero $R$-linear map $F_*R \rightarrow R$) 
\cite{DM20},
so it is not at all obvious that $\Sigma_\uptau$ is finite or that its elements are radical ideals.

If $R$ is essentially of finite type over a local $G$-ring $(A, \mathfrak m)$, then
it suffices to show that ther eis a faithfully flat map $R \rightarrow R'$ such that
$R'$ is $F$-finite and Frobenius split. Indeed, suppose one can find such a cover
of $R$. Then for any $\uptau_{S/R} \in \Sigma_\uptau$,
\[
\uptau_{S/R}R' = \uptau_{S \otimes_R R'/R'}
\]
by Lemma \ref{lem:trace-ideals}(4) because $S$ is a finite extension of $R$. Thus, 
\[
\{\uptau_{S/R}R' : S \in \mathcal{C}\}
\]
is a set of uniformly $F$-compatible ideals of the Frobenius split
$F$-finite ring $R'$ because all the expansion ideals are traces. 
Therefore this set is finite and each $\uptau_{S/R}R'$
is a radical ideal by the argument in the previous paragraph. 
Since $R \rightarrow R'$ is faithfully flat,
\[
\uptau_{S/R} = \uptau_{S/R}R' \cap R.
\]
As contractions of radical ideals are radical, $\Sigma_\uptau$
must be a finite set of radical ideals as well.

We now show the existence $R'$. Let $\widehat{A}$ be the $\mathfrak{m}$-adic 
completion of
$A$. By our assumption, $A \rightarrow \widehat{A}$ is a regular map. Since
$R$ is essentially of finite type over $A$, by 
\cite[\href{https://stacks.math.columbia.edu/tag/07C1}{Tag 07C1}]{stacks-project}
and the fact that property of being regular is preserved under localization,
\[
R \rightarrow R \otimes_A \widehat{A}
\]
is also a regular map. Therefore the relative Frobenius
\[
F_*R \otimes_R (R \otimes_A \widehat{A}) \rightarrow F_*(R \otimes_A \widehat{A})
\]
is faithfully flat by results of Radu \cite[Thm.\ 4]{Rad92} and Andr{\'e} 
\cite[Thm.\ 1]{And93}. Since $R \rightarrow F_*R$
is pure, 
by base change
\[
R \otimes_A \widehat{A} \rightarrow F_*R \otimes_R (R \otimes_A \widehat{A})
\]
is pure, hence so is the composition
\[
R \otimes_A \widehat{A} \rightarrow F_*R \otimes_R (R \otimes_A \widehat{A})
\rightarrow F_*(R \otimes_A \widehat{A}).
\]
This last map is the Frobenius of $R \otimes_A \widehat{A}$. Thus, $R \otimes_A \widehat{A}$
is an $F$-pure ring which is essentially
of finite type over a complete local ring of prime characteristic
(see also \cite[Sec.\ 2]{Has10} for a generalization of this argument). 
Therefore by the gamma
construction, there exists a faithfully flat local map
\[
\widehat{A} \rightarrow \widehat{A}^\Gamma
\]
such that $\widehat{A}^\Gamma$ is $F$-finite and $R \otimes_A \widehat{A}^\Gamma = 
(R \otimes_A \widehat{A}) \otimes_{\widehat{A}} \widehat{A}^\Gamma$ is $F$-pure
\cite[Thm.\ 3.4(ii)]{Mur19}. Consequently, $R \otimes_A \widehat{A}^\Gamma$ is 
Frobenius split since it is $F$-finite. Then we can take 
$R' = R \otimes_A \widehat{A}^\Gamma$
to be the faithfully flat $F$-finite cover of $R$ that is Frobenius split.

It remains to show that if $(R, \mathfrak m)$ is a local $F$-pure ring that is not Frobenius split, then $\Sigma_{\uptau}$ is a finite set of radical ideals. The strategy
is similar to the one above for case (iii). Since $R$ is $F$-pure, so is its completion
$\widehat{R}$ \cite[Cor.\ 6.13]{HR74} (without any restrictions on the formal fibers
of $R$). Then $\widehat{R}$ is Frobenius split because $F$-purity and Frobenius
splitting coincide for complete local rings. We now have that $\widehat{R}$ has 
finitely many uniformly $F$-compatible ideals by Proposition \ref{prop:finiteness-F-compatible}, all of which are radical by Lemma \ref{lem:F-compatible-properties} because we have a splitting. 
Then, as above, $\Sigma_\uptau$ is a finite set
of radical ideals of $R$ because the expansions of these ideals in 
$\widehat{R}$ (which is faithfully flat over $R$) are again trace
ideals, and hence uniformly $F$-compatible, radical and finite in number.

Since $\Sigma_\uptau^{\s} \subseteq \Sigma_\uptau$, we have shown that if $R$
satisfies (i), (ii) or (iii), then $\Sigma_\uptau$ (hence also $\Sigma^{\s}_\uptau$)
is a finite set of radical ideals of $R$. This proves (1).

(2) By (1), Lemma \ref{lem:minimal-trace} and Remark \ref{rem:separable-trace-observations}
we conclude that $\Sigma_\uptau$ and $\Sigma^{\s}_\uptau$ have smallest elements under inclusion.
We can then apply Proposition \ref{prop:splinter-ideal-properties}(9) and 
Proposition \ref{prop:sep-splinter-ideal-properties}(9) to conclude that 
\[
\Spec(R) \setminus \mathbf{V}(\uptau_R) = \Spl(R)  =  \Spec(R) \setminus \mathbf{V}(\uptau^{\s}_R).
\]
This proves (2). 

There are two ways to prove (3). 
We have already observed that $R$ is \emph{N-1} and approximately
Gorenstein if it satisfies (i),
(ii) or (iii). Then we can apply Corollary \ref{cor:the-two-traces} to get (2).

Alternatively, both $\uptau_R$ and $\uptau^{\s}_R$ are radical ideals 
(they are intersections of ideals in $\Sigma_\uptau$) 
that define the non-splinter locus of $\Spec(R)$ by (2), so they must be
equal.

(4) follows by (3) and  Proposition \ref{prop:sep-splinter-ideal-properties}(5). 

For (5)  
choose an $S$ that satisfies the conclusion of (4). If $R \hookrightarrow S$
splits, we have $\uptau_R = \uptau^{\s}_R = R$, that is $R$ is a splinter 
(Proposition \ref{prop:sep-splinter-ideal-properties}(1)).

(6) The equalities
\[
\textrm{$\uptau_{R^+/R} = \uptau_R$ \hspace{2mm} and \hspace{2mm} 
$\uptau_{(R^+)^{\s}/R} = \uptau^{\s}_R$}
\]
follow by Proposition \ref{prop:splinter-ideal-properties}(4) and 
Proposition \ref{prop:sep-splinter-ideal-properties}(4). We are then done by (4).
\end{proof}

\begin{remark}
\label{rem:weaking-formal-fibers}
In Theorem \ref{thm:splinter-F-split}(iii), the formal fibers of 
$(A, \mathfrak m)$ are assumed to be geometrically regular 
in order to ensure that if $R$ is an essentially of finite type $A$-algebra
that is $F$-pure, then the base change 
$R_{\widehat{A}} \coloneqq R \otimes_A \widehat{A}$
is also $F$-pure. However, one can get
by with weaker assumptions on the formal fibers of $A$ in order to
get $F$-purity of $R_{\widehat{A}}$ from that of $R$, which is all
that is needed to construct a faithfully flat $F$-finite cover of $R$ that
is Frobenius split. One such condition is discussed in the present remark,
and another condition will be discussed in Remark \ref{rem:F-pure-homomorphisms}. 
Define a noetherian algebra $R$ over a field $k$ of characteristic $p > 0$ to be
\emph{geometrically $F$-pure} if for all finite purely inseparable 
extensions $\ell$ of $k$, $R \otimes_k \ell$ is $F$-pure. 

\noindent We now
claim that if $(A, \mathfrak m)$ is a noetherian local ring whose
formal fibers are Gorenstein and geometrically $F$-pure, then for
an essentially of finite type $A$-algebra $R$, if $R$ is $F$-pure
then so is $R_{\widehat{A}}$. Moreover, if $R$ is a domain then it is
\emph{N-1}.
We briefly indicate the steps needed to 
prove this result, following the strategy of \cite{DM19} that 
shows an analogous result for `Cohen--Macaulay and
geometrically $F$-injective'. The techniques in \cite{DM19} are 
inspired by arguments of V\'elez \cite{V\'el95}.
\begin{enumerate}
    \item It is known that if $(S, \mathfrak m)
    \rightarrow (T, \mathfrak n)$ is a flat local homomorphism
    of noetherian local rings
    whose closed fiber is Gorenstein and $F$-pure, then 
    $F$-purity ascends from $S$ to $T$. This is proved in
    \cite[Prop.\ 3.3]{Abe01} when $R$ and $S$ are $F$-finite, 
    and the general case appears in \cite[Thm.\ 7.3]{MP21}.

    
    \vspace{1mm}
    
    \item By (1) it suffices to show that if the formal fibers of $A$
    are Gorenstein and geometrically $F$-pure, then 
    $R \rightarrow R_{\widehat{A}}$ has Gorenstein and $F$-pure 
    fibers.
    
    \vspace{1mm}
    
    \item Let $k$ be a field of characteristic $p > 0$. If $R$
    is a noetherian $k$-algebra that is Gorenstein and 
    geometrically $F$-pure, then we claim that for all finitely
    generated field extensions $k'$ of $k$, $R_{k'} = R \otimes_k k'$ is
    Gorenstein and $F$-pure. By
    \cite[\href{https://stacks.math.columbia.edu/tag/0C03}{Tag 0C03}]{stacks-project}, 
    the Gorenstein property is preserved by
    base change along finitely generated field extensions. Thus, 
    $R_{k'}$ is Gorenstein. 
    Since $F$-purity satisfies faithfully flat descent, by the proof strategy of 
    \cite[Prop.\ 4.10]{DM19} and 
    \cite[Lem.\ 4.9]{DM19}, it suffices to show $R_{k'}$ is $F$-pure
    when $k'$ is a finitely generated separable extension and when
    $k'$ is a finite purely inseparable extension. If $k'$ is a finitely generated 
    separable extension of $k$, then $R \rightarrow R_{k'}$ is a
    regular homomorphism, so $R_{k'}$ is $F$-pure by (1) or the 
    argument in the proof of Theorem \ref{thm:splinter-F-split}(1). 
    If $k'$ is a finite purely inseparable extension of $k$, then
    $R_{k'}$ is $F$-pure by the definition of geometrically 
    $F$-pure.
    
    \vspace{1mm}
    
    \item We now claim that if $S \rightarrow T$ is a homomorphism
    of noetherian rings whose fibers are Gorenstein and 
    geometrically $F$-pure, then for every essentially of finite 
    type $S$-algebra $R$, the fibers of $R \rightarrow R \otimes_S T$
    are also Gorenstein and $F$-pure. The proof of this reduces to
    (3) by \cite[Lem.\ 7.3.7]{EGAIV_2}. Finally, (2) 
    follows by (4) upon taking $S = A$ and $T = \widehat{A}$. 
    Therefore if $R$ is $F$-pure, so is $R_{\widehat{A}}$.
    
    \vspace{1mm}
    
    \item  The formal fibers of $A$ are geometrically reduced since they are
    geometrically $F$-pure. By the Zariski-Nagata theorem
    \cite[Thm.\ 7.6.4]{EGAIV_2}, $A$ is a Nagata ring, that is, for all
    prime ideals $\mathfrak p$ of $A$, $A/\mathfrak{p}$ is a Japanese ring.
    Then $A$ is universally Japanese by \cite[\href{https://stacks.math.columbia.edu/tag/0334}{Tag 0334}]{stacks-project}.
    Hence $R$ is universally Japanese by
    \cite[\href{https://stacks.math.columbia.edu/tag/032S}{Tag 032S}]{stacks-project},
    and so, $R$ is \emph{N-1} if it is a domain.
\end{enumerate}
\end{remark}

\begin{remark}
    \label{rem:F-pure-homomorphisms}
    The proof of Theorem \ref{thm:splinter-F-split} shows, more generally,
    that if $R$ is a noetherian $F$-pure domain that admits a faithfully flat
    cover $R \to S$, where $S$ has finitely many trace ideals of finite extensions
    of $S$, then the splinter locus of
    $R$ is open in $\Spec(R)$. 
    To illustrate the utility of this observation, suppose 
    $(A, \mathfrak m) \xrightarrow{\varphi} R$ is an essentially of finite
    type $F$-pure homomorphism of noetherian
    rings in
    the sense of Hashimoto \cite[(2.3)]{Has10}, where $(A, \mathfrak m)$
    is a noetherian $F$-pure local ring and $R$ is a domain. Being an
    $F$-pure homomorphism means that 
    the relative Frobenius 
    \[F_{R/A} \colon F_*A \otimes_R R \to F_*R\]
    is a pure ring map. For example, $\varphi$ is $F$-pure if 
    $F_{R/A}$ is
    faithfully flat, or equivalently, if $\varphi$ is
    geometrically regular by \cite{Rad92, And93}.
    The hypothesis that $\varphi$ is $F$-pure
    and $A$ is $F$-pure implies that $R$ is also $F$-pure 
    \cite[Prop. 2.4(4)]{Has10}. Consider the commutative diagram
    \[
        \begin{tikzcd}
            A \arrow[r] \arrow[d, "\varphi"]
              & \widehat{A} \arrow[r] \arrow[d, " \varphi \otimes \widehat{A}"]  
              &\widehat{A}^\Gamma \arrow[d, " \varphi \otimes \widehat{A}^{\Gamma}"]\\
            R \arrow[r]
          &   R \otimes_A \widehat{A} \arrow[r] & R \otimes_A \widehat{A}^{\Gamma},
        \end{tikzcd}  
    \]
    where $\widehat{A}$ is $F$-pure by Remark \ref{rem:weaking-formal-fibers}(1), and $\widehat{{A}}^{\Gamma}$
    is chosen to be noetherian local, $F$-finite and $F$-pure 
    (equivalently, Frobenius split)
    via the $\Gamma$-construction. Then $\varphi \otimes \widehat{A}^\Gamma$
    is also essentially of finite type, and so, $R \otimes_A \widehat{A}^\Gamma$
    is noetherian and $F$-finite because $\widehat{A}^\Gamma$ is. Since
    $F$-pure homomorphisms are stable under arbitrary base change
    \cite[Prop. 2.4(7)]{Has10}, it follows that 
    $\varphi \otimes \widehat{A}^\Gamma$
    is $F$-pure. Consequently, $R \otimes_A \widehat{A}^\Gamma$
    is a faithfully flat cover of $R$ that
    is noetherian, $F$-finite and $F$-pure, 
    and so, it has finitely many trace ideals by 
    Proposition \ref{prop:finiteness-F-compatible} and Lemma 
    \ref{lem:trace-ideals}(2).
    Thus, the splinter locus of $\Spec(R)$ is open.
\end{remark}

The formation of trace commutes with Henselizations
and completions under certain conditions, giving
a refinement of Proposition \ref{prop:trace-henselization-completion}.

\begin{corollary}
\label{cor:trace-commutes-completion-henselization}
Let $(R, \mathfrak m)$ be a noetherian local domain which is $F$-pure and normal.
\begin{enumerate}
    \item If $R$ is Frobenius split,
    then $\uptau_RR^h = \uptau_{R^h}$.
    \item If $R$ has geometrically regular formal fibers, then 
    $\uptau_RR^h = \uptau_{R^h}$.
    \item If $R$ has geometrically regular formal fibers, then
    $\uptau_R\widehat{R} = \uptau_{\widehat{R}}$.
\end{enumerate}
\end{corollary}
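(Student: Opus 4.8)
The plan is to treat the three parts uniformly by reducing each to Proposition~\ref{prop:trace-henselization-completion} together with the identification of the non-splinter locus as $\mathbf{V}(\uptau_R)$ from Theorem~\ref{thm:splinter-F-split}. Write $A$ for $R^h$ in parts (1)--(2) and for $\widehat R$ in part (3), and let $\psi\colon R\to A$ be the canonical faithfully flat map. The first step is to verify that $A$, like $R$, is a local $F$-pure normal domain, so that Theorem~\ref{thm:splinter-F-split} applies to $A$. Normality is standard: $R^h$ is a normal domain whenever $R$ is \cite[\href{https://stacks.math.columbia.edu/tag/06DI}{Tag 06DI}]{stacks-project}, and in part (3) the geometrically regular formal fibers force $\widehat R$ to be a normal domain \cite[\href{https://stacks.math.columbia.edu/tag/0BFK}{Tag 0BFK}]{stacks-project}. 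For $F$-purity, $R\to R^h$ is a filtered colimit of \'etale maps (hence regular) and $R\to\widehat R$ is regular in part (3) by hypothesis, so in either case the Radu--Andr\'e argument from the proof of Theorem~\ref{thm:splinter-F-split}(1) shows $A$ is $F$-pure (in part (1) one may instead simply base change a Frobenius splitting of $R$ along $\psi$, using that the relative Frobenius of an \'etale map is an isomorphism). Consequently $\uptau_R$ and $\uptau_A$ are radical ideals, $\Spec R\setminus\mathbf{V}(\uptau_R)=\Spl(R)$, and $\Spec A\setminus\mathbf{V}(\uptau_A)=\Spl(A)$, all by Theorem~\ref{thm:splinter-F-split}.

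By Proposition~\ref{prop:trace-henselization-completion} we have $\uptau_A\cap R=\uptau_R$, and therefore $\uptau_R A=(\uptau_A\cap R)A\subseteq\uptau_A$; only the reverse inclusion is in question. Both $\uptau_R A$ and $\uptau_A$ are radical ideals of $A$: the second by Theorem~\ref{thm:splinter-F-split}(3), and the first because $\uptau_R$ is radical and $\psi$ has geometrically reduced fibers (Henselization of a reduced ring is reduced; for $\widehat R$ this is the geometrically-regular-formal-fibers hypothesis), so that $A/\uptau_R A=(R/\uptau_R)\otimes_R A$ is reduced. Hence it suffices to show $\mathbf{V}(\uptau_R A)=\mathbf{V}(\uptau_A)$ inside $\Spec A$; the inclusion $\mathbf{V}(\uptau_A)\subseteq\mathbf{V}(\uptau_R A)$ is immediate from $\uptau_R A\subseteq\uptau_A$. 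For the other inclusion, fix $\mathfrak q\in\Spec A$ and set $\mathfrak p\coloneqq\psi^{-1}(\mathfrak q)$; since $\uptau_R A\subseteq\mathfrak q$ if and only if $\uptau_R\subseteq\mathfrak p$, Theorem~\ref{thm:splinter-F-split}(2) applied to $A$ and to $R$ converts the desired containment into the implication
\[
A_{\mathfrak q}\ \text{is a splinter}\ \Longrightarrow\ R_{\mathfrak p}\ \text{is a splinter}.
\]

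This last implication is the crux, and it is a descent statement for the splinter property along the faithfully flat local homomorphism $R_{\mathfrak p}\to A_{\mathfrak q}$ (faithfully flat because $\psi$ is flat and the induced map is local). I would prove it via the cyclic-purity criterion \cite[Lem.\ 2.3.1]{DT19}, which applies since $R_{\mathfrak p}$ and $A_{\mathfrak q}$ are normal, hence approximately Gorenstein: choose compatible algebraic closures so that $R_{\mathfrak p}^{+}\subseteq A_{\mathfrak q}^{+}$; then $R_{\mathfrak p}\to A_{\mathfrak q}$ is faithfully flat, hence cyclically pure, while $A_{\mathfrak q}\to A_{\mathfrak q}^{+}$ is cyclically pure because $A_{\mathfrak q}$ is a splinter, so the composite $R_{\mathfrak p}\to A_{\mathfrak q}^{+}$ is cyclically pure; as it factors through $R_{\mathfrak p}\to R_{\mathfrak p}^{+}$, the latter map is cyclically pure, whence $R_{\mathfrak p}$ is a splinter. (Equivalently, one may invoke directly the faithfully flat descent of the splinter property from \cite{DT19}.) This handles the three parts uniformly; the main obstacle is precisely this faithfully-flat descent step, which relies on the cyclic-purity characterization of splinters and the approximately-Gorenstein hypothesis it requires, everything else being bookkeeping around Proposition~\ref{prop:trace-henselization-completion} and Theorem~\ref{thm:splinter-F-split}.
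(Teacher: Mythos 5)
Your proof is correct and follows essentially the same route as the paper's: show $\uptau_R A$ and $\uptau_A$ are radical (via regularity of $R \to A$ and Theorem \ref{thm:splinter-F-split}), get $\uptau_R A \subseteq \uptau_A$ from Proposition \ref{prop:trace-henselization-completion}, and conclude equality by showing both ideals cut out the non-splinter locus of $\Spec(A)$, the key point being faithfully flat descent of the splinter property along $R_{\mathfrak p} \to A_{\mathfrak q}$. The only cosmetic differences are that you treat the three parts uniformly and spell out the descent step via cyclic purity into $A_{\mathfrak q}^{+}$, which the paper simply invokes.
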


\begin{proof}
(1) $R^h$ is a normal domain by 
\cite[\href{https://stacks.math.columbia.edu/tag/06DI}{Tag 06DI}]{stacks-project}. 
We claim that $R^h$ is also Frobenius split.
Indeed, since $R^h$ is a filtered colimit of \'etale $R$-algebras,
the relative Frobenius 
\[
F_*R \otimes_R R^h \rightarrow F_*R^h
\]
is an isomorphism because $R \to R^h$ is weakly \'etale 
\cite[\href{https://stacks.math.columbia.edu/tag/092N}{Tag 092N}]{stacks-project} 
and the relative Frobenius of a weakly \'etale map is an isomorphism
\cite[\href{https://stacks.math.columbia.edu/tag/0F6W}{Tag 0F6W}]{stacks-project}. 
Then $R^h$ is Frobenius split because splittings 
are preserved under base change. By 
Theorem \ref{thm:splinter-F-split}(1), $\uptau_R$ (resp. $\uptau_{R^h}$)
defines the non-splinter locus of $\Spec(R)$ (resp. $\Spec(R^h)$). 
Moreover, $\uptau_R$
and $\uptau_{R^h}$ are radical by Theorem \ref{thm:splinter-F-split}(3). 
Since $R \rightarrow R^h$ is a regular homomorphism, 
\[
R/\uptau_R \rightarrow R^h/\uptau_RR^h
\]
is a regular homomorphism as well by finite type base change
\cite[\href{https://stacks.math.columbia.edu/tag/07C1}{Tag 07C1}]{stacks-project}. 
Then $R^h/\uptau_RR^h$ is reduced by 
\cite[\href{https://stacks.math.columbia.edu/tag/07QK}{Tag 07QK}]{stacks-project} because $R/\uptau_R$ is reduced. 
In other words, $\uptau_RR^h$ is a radical ideal of
$R^h$. Therefore to show that $\uptau_RR^h = \uptau_{R^h}$, it suffices
to check that $\uptau_RR^h$ defines the non-splinter locus of $\Spec(R^h)$
as well.

By Proposition \ref{prop:trace-henselization-completion}(1), we 
have
\[
\uptau_{R}R^h \subseteq \uptau_{R^h}.
\]
Therefore $\mathbf{V}(\uptau_{R^h}) \subseteq \mathbf{V}(\uptau_RR^h)$. 
On the other hand, if $\mathfrak q \in \mathbf{V}(\uptau_RR^h)$, then
\[
\uptau_R \subseteq \mathfrak{q} \cap R,
\]
that is, $R_{\mathfrak{q} \cap R}$ is not a splinter. However
\[
R_{\mathfrak{q} \cap R} \rightarrow (R^h)_{\mathfrak{q}}
\]
is faithfully flat (because $R \rightarrow R^h$ is), 
and the splinter property satisfies faithfully flat
descent. Consequently, $(R^h)_{\mathfrak q}$ cannot be a splinter, and so,
$\mathfrak{q} \in \mathbf{V}(\uptau_{R^h})$. This establishes the other
inclusion $\mathbf{V}(\uptau_RR^h) \subseteq \mathbf{V}(\uptau_{R^h})$,
completing the proof of (1).

(2) Since the relative Frobenius $F_*R \otimes_R R^h \rightarrow F_*R^h$
is an isomorphism, $R^h$ is also $F$-pure when $R$ is $F$-pure. 
Moreover, the formal fibers of
$R^h$ are also geometrically regular
\cite[Thm.\ 5.3(i)]{Gre76} because $A \rightarrow A^h$
is ind-\'etale and hence absolutely flat. Therefore by Theorem \ref{thm:splinter-F-split}(2)
the non-splinter locus of $R$ (resp. $R^h$) is defined by $\uptau_R$ (resp. 
$\uptau_{R^h}$). One can now use the same argument as in (1) to get (2).

(3) Note that $\widehat{R}$ is $F$-pure. This is true for the completion of
any $F$-pure noetherian local ring by
Remark \ref{rem:weaking-formal-fibers}(1), 
but in our setting this also 
follows by the regularity of $R \rightarrow \widehat{R}$
and the Radu-Andr\'e theorem using the argument given in the proof of Theorem
\ref{thm:splinter-F-split}(1).
Proposition 
\ref{prop:trace-henselization-completion}(2) shows that 
$\uptau_{R}\widehat{R} \subseteq \uptau_{\widehat{R}}$ 
and Theorem \ref{thm:splinter-F-split}(2) shows that 
$\uptau_R$ (resp. $\uptau_{\widehat{R}}$) defines the non-splinter locus of $R$
(resp. $\widehat{R}$). One can then mimic
the argument of (1) to prove (3). We omit the details.
\end{proof}

\begin{remark}
The proof of Corollary \ref{cor:trace-commutes-completion-henselization}
shows more generally 
that if $(A, \mathfrak m)$ is a noetherian normal domain of 
arbitrary characteristic that satisfies the hypotheses of Proposition
\ref{prop:trace-henselization-completion}, and if $\uptau_A$, 
$\uptau_{A^h}$ and $\uptau_{\widehat{A}}$ define the non-splinter locus
$A$, $A^h$ and $\widehat{A}$ respectively, then $\uptau_AA^h$ and
$\uptau_{A^h}$ agree up to radical, as do $\uptau_A\widehat{A}$ and
$\uptau_{\widehat{A}}$.
\end{remark}

A natural question one can ask is whether $\uptau_R$ is nonzero. Indeed, if $\uptau_R$
is the ideal that defines the non-splinter locus in general, then $\uptau_R$ has to
be nonzero because a domain is generically a splinter. The next result implies that showing 
$\uptau_R \neq 0$ is equivalent to a long-standing conjecture
in tight closure theory on the existence of test elements.

\begin{proposition}
\label{prop:nonzero-splinter-ideal}
Let $R$ be a noetherian domain of 
characteristic $p > 0$ whose local
rings at maximal ideals have geometrically
regular formal fibers (i.e. $R$ is a $G$-ring) and 
whose regular locus is open. 
Let $\uptau^{\fg}_*(R)$ be the finitistic tight closure test ideal
of $R$. Then we have the following:
\begin{enumerate}
    \item $\uptau^{\fg}_*(R) \subseteq \uptau_R$.
     \item If $T_{F_*R/R} \neq 0$, then $\uptau^{\fg}_*(R) \neq 0$.
    \item $\uptau_R \neq 0$ if and only if 
    $\uptau^{\fg}_*(R) \neq 0$.
    \item If $R$ is $F$-pure, 
    then $\uptau_R \neq 0$.
    \item If $\uptau_{F_*R/R} \neq 0$, that is, if 
    there is a nonzero $R$-linear map $F_*R \rightarrow R$, then $\uptau_R \neq 0$.
\end{enumerate}
\end{proposition}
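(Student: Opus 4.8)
The standing hypotheses say that $R$ is a reduced noetherian domain that is a $G$-ring with open regular locus; in particular each $R_\mathfrak m$ has geometrically reduced formal fibers, so $\widehat{R_\mathfrak m}$ is reduced, $R_\mathfrak m$ is formally reduced, and hence $R_\mathfrak m$ and therefore $R$ are approximately Gorenstein. Consequently Proposition~\ref{prop:splinter-ideal-properties}(2)--(3) gives $\uptau_R = T_R = T_{R^+/R} = \bigcap_I(I : IR^+ \cap R)$ (the big plus closure test ideal), while $\uptau^{\fg}_*(R) = \bigcap_I(I : I^*)$ by \cite[Prop.~(8.15)]{HH90} applied to the approximately Gorenstein ring $R$; both intersections run over all ideals $I$. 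I would then record two inclusions of colon ideals. First, $IR^+ \cap R \subseteq I^*$: an element of $IR^+ \cap R$ already lies in $IS$ for some module-finite domain extension $R \subseteq S \subseteq R^+$, and multiplying by a nonzero $c \in R$ with $cS \subseteq R$ yields $cz^{p^e} \in I^{[p^e]}$ for all $e$. Second, $IF_*R \cap R = \{r \in R : r^p \in I^{[p]}\} \subseteq IR^+ \cap R$, because $R^+$ is absolutely integrally closed, hence closed under extraction of $p$-th roots, so $r^p \in I^{[p]} \subseteq (IR^+)^{[p]}_{R^+}$ forces $r \in IR^+$. Taking $(I : -)$ and intersecting over $I$, these give the chain
\[
\uptau^{\fg}_*(R) \ \subseteq\ \uptau_R \ = \ T_{R^+/R}\ \subseteq\ T_{F_*R/R},
\]
whose leftmost inclusion is exactly part (1).

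Granting part (2) — that $T_{F_*R/R} \neq 0$ forces $\uptau^{\fg}_*(R) \neq 0$ — the other parts follow formally from this chain. Part (3) is immediate. For (4), if $R$ is $F$-pure then the noetherian ring map $R \to F_*R$ is cyclically pure, so $T_{F_*R/R} = R$ by Lemma~\ref{lem:contraction-ideals}(1), whence $\uptau^{\fg}_*(R) \neq 0$ and hence $\uptau_R \neq 0$. For (5), a nonzero $R$-linear map $F_*R \to R$ means $\uptau_{F_*R/R} \neq 0$, and $\uptau_{F_*R/R} \subseteq T_{F_*R/R}$ by Lemma~\ref{lem:contraction-ideals}(3), so again $T_{F_*R/R} \neq 0$ and $\uptau_R \neq 0$.

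The crux is part (2), and this is where the hypotheses that $R$ is a $G$-ring with open regular locus are used seriously; the plan is to adapt the Hochster--Huneke construction of test elements. Fix $0 \neq c_0 \in T_{F_*R/R}$, so that $r^p \in I^{[p]}$ implies $c_0 r \in I$ for every ideal $I$; applying this to the ideals $I^{[p^{e-1}]}$ upgrades, for any $z \in I^*$ with a tight closure witness $d \in R \setminus \{0\}$ (so $d z^{p^{e'}} \in I^{[p^{e'}]}$ for all $e' \geq e$), to the conclusion $c_0^{\,e}\, d\, z \in I$ after iterating down the Frobenius filtration. The difficulty is that the exponent $e$ is not a priori bounded, so this alone does not produce a single nonzero multiplier annihilating $I^*/I$ for all $I$ at once. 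To cap the exponent I would either invoke Huneke's uniform Artin--Rees theorem — available since $R$ is a reduced $G$-ring with open regular locus — to bound uniformly the Frobenius power needed in any tight closure computation, or pass, via completion at a maximal ideal and the $\Gamma$-construction, to a faithfully flat $F$-finite reduced cover, where completely stable test elements exist unconditionally \cite{HH94}, and then descend a nonzero test element to $R$ using faithfully flat descent together with the compatibility of plus closure and tight closure. Either route extracts a nonzero element of $\uptau^{\fg}_*(R)$. I expect this uniformization step — rather than the formal bookkeeping of the first two paragraphs — to be the real obstacle; it is essentially a disguised instance of the test element existence problem, which is why parts (2)--(5) read as a translation of the statement "$\uptau_R \neq 0$'' into that well-studied conjecture.
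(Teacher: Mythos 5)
Your formal skeleton — approximate Gorensteinness from the $G$-ring hypothesis, the identification $\uptau_R = T_{R^+/R} = \bigcap_I (I:IR^+\cap R)$ via Proposition \ref{prop:splinter-ideal-properties}, the identity $\uptau^{\fg}_*(R)=\bigcap_I(I:I^*)$ from \cite[Prop.\ (8.15)]{HH90}, the chain $\uptau^{\fg}_*(R)\subseteq \uptau_R = T_{R^+/R}\subseteq T_{F_*R/R}$, and the formal deduction of (3)--(5) from (1)--(2) — is exactly the paper's argument. One slip: your verification of $IR^+\cap R\subseteq I^*$ multiplies by a nonzero $c\in R$ with $cS\subseteq R$, but such a $c$ exists only when $S\subseteq\Frac(R)$, and a finite $R$-subalgebra $S$ of $R^+$ typically enlarges the fraction field (e.g.\ $R=\mathbf{F}_p[x]$, $S=R[x^{1/p}]$). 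The correct multiplier is a nonzero $R$-linear map $\phi\colon S\to R$ with $\phi(1)=c\neq 0$ (which exists as in Lemma \ref{lem:minimal-trace}(1)): then $z^{p^e}\in I^{[p^e]}S$ gives $cz^{p^e}=\phi(z^{p^e})\in I^{[p^e]}$. Alternatively one simply cites \cite[Cor.\ (5.23)]{HH94(b)}, as the paper does.

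The genuine gap is part (2), and you flag it yourself: your proof of (2) is a sketch of two possible routes rather than an argument. The iteration $c_0^{\,e}dz\in I$ with unbounded exponent correctly isolates the uniformization problem, but neither proposed fix closes it as stated. The uniform Artin--Rees route is unsubstantiated — it is not explained (and not a standard implication) how uniform Artin--Rees bounds the Frobenius power needed in a tight-closure test, and in Aberbach's work the implication runs the other way (test elements yield uniform Artin--Rees). The completion/$\Gamma$-construction route founders precisely at the descent step: completing at a maximal ideal $\mathfrak m$ and invoking \cite{HH94} yields a power $c^{N_{\mathfrak m}}$ that is a test element of $\widehat{R_{\mathfrak m}}$ with $N_{\mathfrak m}$ depending on $\mathfrak m$, and with infinitely many maximal ideals there is no common power; moreover your description of this route never uses the hypothesis $T_{F_*R/R}\neq 0$, which cannot be right, since without it (2) would resolve the open problem of test-element existence for reduced $G$-rings. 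The paper closes the gap by observing that, since $T_{F_*R/R}$ is a nonzero ideal and the regular locus of the domain $R$ is open and nonempty, one may choose a single $c\neq 0$ with $R_c$ regular and $c\in T_{F_*R/R}$, and then invoking \cite[Thm.\ 1.2]{Abe93}, which is precisely the published result that turns such a Frobenius-closure multiplier into a test element over a $G$-ring — i.e.\ the rigorous form of the patching you gesture at. With that citation (or a worked-out version of Aberbach's argument) substituted for your sketch, the rest of your proposal goes through.
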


\begin{proof}
The completions of the local rings of $R$ at
maximal ideals are reduced because the property of being reduced is preserved 
under completion when the formal fibers are geometrically regular.
Thus,
$R$ is approximately Gorenstein.

(1) Since $R$ is approximately Gorenstein, $\uptau_R = T_{R^+/R} 
= \bigcap_I (I: IR^+ \cap R)$ by Proposition \ref{prop:splinter-ideal-properties}(3). For all ideals $I$ of $R$, 
we have
\[
IR^+ \cap R \subseteq I^*
\]
by \cite[Cor.\ (5.23)]{HH94(b)}. Thus, $(I:I^*) \subseteq (I:IR^+\cap R)$, and so,
\[
\uptau^{\fg}_*(R) = \bigcap_I (I:I^*) \subseteq \bigcap_I (I: IR^+ \cap R) =: \uptau_R.
\]
Here the first equality follows by \cite[Prop.\ (8.15)]{HH90} because $R$ is 
approximately Gorenstein. 
This proves (1). 

(2) Since
$R$ is a domain and the regular locus of $R$ is open, 
one can find a $c \neq 0$ such that $R_c$ is regular and 
$c \in T_{F_*R/R}$. Then the result follows by 
\cite[Thm.\ 1.2]{Abe93}.

(3) If $\uptau^{\fg}_*(R) \neq 0$, then $\uptau_R \neq 0$ by (1). 
Conversely, suppose $\uptau_R = T_{R^+/R} \neq 0$. Since
$F_*R$ embeds in $R^+$, this means that
\[
0 \neq T_{R^+/R} \subseteq T_{F_*R/R} = \bigcap_I (I: IF_*R\cap R).
\]
Then $\uptau^{\fg}_*(R) \neq 0$ by (2).

(4) follows (2) and (3) because if $R$ is $F$-pure, then
$T_{F_*R/R} = R$.

(5) If $\uptau_{F_*R/R} \neq 0$, then $T_{F_*R/R} \neq 0$
by Lemma \ref{lem:contraction-ideals} because 
$\uptau_{F_*R/R} \subseteq T_{F_*R/R}$. We are then 
done by
(2) and (3).
\end{proof}

We now deduce our main global result.

\begin{theorem}
\label{thm:F-split-locus-open}
Let $X$ be a scheme of prime characteristic $p > 0$. Suppose that $X$ satisfies any
of the following conditions:
\begin{enumerate}
    \item[(i)] $X$ is locally noetherian and $F$-finite.
    \item[(ii)] $X$ is locally essentially of finite type over a noetherian local ring
    $(A, \mathfrak m)$ of prime characteristic $p > 0$ with geometrically regular 
    formal fibers.
\end{enumerate}
Then
\[
\Spl(X) = \textrm{$\{x \in X \colon \mathcal{O}_{X,x}$ is a splinter$\}$}
\]
is open in $X$.
\end{theorem}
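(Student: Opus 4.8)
The plan is to deduce this global statement from the affine result Theorem \ref{thm:splinter-F-split} by two successive localizations: first passing to a normal domain, then to an $F$-pure normal domain. Since openness of a subset of $X$ can be tested on an affine open cover and $\Spl(U) = \Spl(X)\cap U$ for every open $U\subseteq X$, I may assume $X = \Spec(R)$ with $R$ noetherian and either $F$-finite (case (i)) or essentially of finite type over $(A,\mathfrak m)$ (case (ii)). The two facts that make the reduction work are that $\Spl(\Spec R)$ is contained in the normal locus $\Nor(\Spec R)$ — splinters being normal domains — and that it is contained in the $F$-pure locus of $\Spec R$ — since splinters of prime characteristic are $F$-pure and $F$-purity is stable under localization. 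So it suffices to show that every $\mathfrak p\in \Spl(\Spec R)$ has an open neighbourhood on which the structure ring is an $F$-pure normal domain of type (i) or (ii); Theorem \ref{thm:splinter-F-split}(2) then identifies the splinter locus there as the complement of $\mathbf{V}(\uptau)$, which is open.

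To carry this out I would use that both loci are open. For the normal locus: in case (i), $R$ is excellent \cite[Thm.\ 2.5]{Kun76}; in case (ii), $A$ is a local $G$-ring, hence quasi-excellent \cite[(33.D), Thm.\ 3.6]{Mat80}, so $R$ is quasi-excellent as it is essentially of finite type over $A$; in either case $R$ has open regular locus, hence open normal locus by \cite[Cor.\ (6.13.5)]{EGAIV_2}. Shrinking to a basic open $D(g)\subseteq \Nor(\Spec R)$ about $\mathfrak p$, the ring $R_g$ is noetherian normal, hence a finite product of normal domains, and replacing $R$ by the factor whose spectrum contains $\mathfrak p$ reduces us to the case where $R$ is itself a noetherian normal domain of type (i) or (ii). The $F$-pure locus of $\Spec R$ is also open and, more to the point, is covered by basic opens on which $R$ is $F$-pure: in case (i) the cokernel of $\Hom_R(F_*R,R)\xrightarrow{\eval \MVAt 1} R$ is a finitely generated $R$-module whose support is the non-$F$-split locus, and $F$-split coincides with $F$-pure for $F$-finite rings; in case (ii), $F$-purity is transported along the faithfully flat $F$-finite Frobenius-split cover $R\to R\otimes_A \widehat{A}^{\Gamma}$ produced by the $\Gamma$-construction exactly as in the proof of Theorem \ref{thm:splinter-F-split}(1) (compare \cite[Thm.\ 3.4]{Mur19}, \cite[Sec.\ 2]{Has10}), so one may again cut out an $F$-pure basic neighbourhood. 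Thus there is $f\in R\setminus\mathfrak p$ with $R_f$ an $F$-pure domain of type (i) or (ii), and Theorem \ref{thm:splinter-F-split}(2) gives $\Spl(\Spec R)\cap D(f) = \Spl(\Spec R_f) = D(f)\setminus\mathbf{V}(\uptau_{R_f})$, which is open; since $\mathfrak p$ was arbitrary, $\Spl(\Spec R)$ is open.

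I expect the assembly above to be routine; the real weight sits entirely in the two inputs — Theorem \ref{thm:splinter-F-split}, whose proof rests on the finiteness of uniformly $F$-compatible ideals (Schwede's theorem together with the Huneke--Watanabe multiplicity bound), and the openness of the $F$-pure locus in case (ii) via the $\Gamma$-construction. The only point in the reduction that genuinely needs attention is that passing to domains is legitimate precisely because $\Spl$ is contained in the normal locus: without the normality constraint one could not invoke the affine theorem, which is stated for domains, so the order of the two localizations (normal first, then $F$-pure) matters.
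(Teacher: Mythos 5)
Your proposal is correct and follows essentially the same route as the paper: both reduce to the (open) intersection of the normal and $F$-pure loci and then invoke Theorem \ref{thm:splinter-F-split}(2) on an affine cover by $F$-pure normal domains satisfying (i) or (iii). The only cosmetic difference is that in case (ii) the paper cites \cite[Cor.\ 3.5]{Mur19} directly for openness of the $F$-pure locus, whereas you re-sketch the $\Gamma$-construction/faithfully flat descent argument underlying that result.
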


Recall that we say that $X$ is \emph{locally essentially of finite type over $A$} 
if there
exists an affine open cover $\Spec(B_i)$ of $X$ such that for all $i$, $B_i$
is an essentially of finite type $A$-algebra.

\begin{proof}
Let $\Nor(X)$ denote the normal locus of $X$ and $fp(X)$ denote the locus of points
$x \in X$ such that $\mathcal{O}_{X,x}$ is $F$-pure. We claim that both these loci
are open if $X$ satisfies (i) or (ii). Indeed, in either case $X$ is quasi-excellent
and has an open regular locus. Then $\Nor(X)$ is open by \cite[Cor. (6.13.5)]{EGAIV_2}.
If $X$ is locally noetherian and $F$-finite, then $fp(X)$ is open and coincides
with the locus of points at which $X$ is Frobenius split. 
If $X$ is locally essentially of
finite type over $(A, \mathfrak m)$ as in (ii), then $fp(X)$ is open by
\cite[Cor.\ 3.5]{Mur19} (this result is attributed to Hoshi when
$A$ is excellent in \cite[Thm.\ 3.2]{Has10b}).
Note that \cite[Cor.\ 3.5]{Mur19} is stated assuming $X$ is quasi-compact,
but for openness of loci, one can always work on an affine cover of $X$.

If $\mathcal{O}_{X,x}$
is a splinter, then it is normal and $F$-pure (the latter follows because the Frobenius
$F: \mathcal{O}_{X,x} \rightarrow F_*\mathcal{O}_{X,x}$ is an integral extension). Thus,
\[
\Spl(X) \subseteq \Nor(X) \cap fp(X).
\]
Therefore, replacing $X$ by $\Nor(X) \cap fp(X)$, we may assume $X$ is locally $F$-pure
and normal. Now there exists an affine open cover $\{\Spec(R_{\alpha})\}_\alpha$ of $X$,
where each $R_\alpha$ is an $F$-pure domain that satisfies 
condition (i) or (iii) of Theorem \ref{thm:splinter-F-split} depending on whether
$X$ satisfies conditions (i) or (ii) in the statement of this theorem
(you can even choose $R_\alpha$
to be normal). Then for all $\alpha$, 
$\Spl(X) \cap \Spec(R_\alpha) = \Spl(R_\alpha)$ is open in 
$\Spec(R_\alpha)$ by
Theorem \ref{thm:splinter-F-split}(2), and hence, also in $X$. Then 
$\Spl(X) = \bigcup_{\alpha} \Spl(R_\alpha)$ is open in $X$ as well.
\end{proof}

\begin{remark}
\label{rem:open-F-pure-loci}
In Remark \ref{rem:weaking-formal-fibers} we showed that 
Theorem \ref{thm:splinter-F-split} still holds if in part (iii) of
loc. cit. we assume that the formal fibers of $A$ are Gorenstein
and geometrically $F$-pure. We claim that the same hypotheses
on the formal fibers of $A$ also work for Theorem \ref{thm:F-split-locus-open}. 
The two things 
we need to check are: 
\begin{enumerate}
    \item \emph{If $A$ has Gorenstein and geometrically
    $F$-pure formal fibers, then for any essentially of finite type
    $A$-algebra $R$, the $F$-pure locus of $R$ is open}:
    We follow the proof strategy of \cite[Thm.\ B]{DM19} which establishes
    the analogous fact for the property `Cohen--Macaulay and
    geometrically $F$-injective'. Since the 
    induced map $\Spec(R_{\widehat{A}}) \rightarrow \Spec(R)$
    is faithfully flat and quasi-compact, by \cite[Cor.\ 2.3.12]{EGAIV_2}, 
    it suffices to show that the inverse image of
    the $F$-pure locus of $\Spec(R)$ is open in $\Spec(R_{\widehat{A}})$. 
    But this inverse
    image is the $F$-pure locus of $\Spec(R_{\widehat{A}})$ by
    Remark \ref{rem:weaking-formal-fibers}(1),(4) and by 
    faithfully flat 
    descent of $F$-purity. That the $F$-pure locus of $R_{\widehat{A}}$
    is open now follows by \cite[Cor.\ 3.5]{Mur19} because $\widehat{A}$ 
    is excellent. 
    
    \vspace{1mm}

    \item \emph{If $A$ has Gorenstein and geometrically
    $F$-pure formal fibers, then the normal locus of any essentially of finite type
    $A$-algebra $R$ is open}:
    By Remark \ref{rem:weaking-formal-fibers}(5), $R$ is universally Japanese,
    and so, for all finite $R$-algebras $S$ such that $S$ is domain, 
    the normal locus of $S$ is open by 
    \cite[Cor.\ 6.13.3]{EGAIV_2}. Then the normal locus of $R$ is open by
    \cite[Prop.\ 6.13.7]{EGAIV_2}.
\end{enumerate}
\end{remark}

\begin{example}
In order to show the openness of splinter loci in prime characteristic, it suffices to restrict ones attention to the intersection of the $F$-pure and normal loci. This intersection 
is open as long as the $F$-pure locus and the normal locus are both open. 
Furthermore, we have
shown that in the local case, a noetherian $F$-pure domain always has an open
splinter locus (Theorem \ref{thm:splinter-F-split}). Thus one may naturally wonder if
the splinter loci of an $F$-pure and normal noetherian domain is always open. We now
use a construction of Hochster \cite{Hoc73(b)} to give examples of locally excellent $F$-pure and normal domains of prime characteristic $p > 0$ whose splinter loci are not
open. 
We begin by choosing an algebraically closed field $k$ of prime characteristic
$p > 0$ and a local domain $(R, \mathfrak m)$ essentially of finite type over $k$
such that $(R, \mathfrak m)$ is $F$-pure and normal, $R$ is not a splinter and $R/\mathfrak m = k$. The last hypothesis ensures that if $K/k$ is any field extension, then $\mathfrak{m}(K \otimes_k R)$ is a maximal ideal of $K \otimes_k R$. For an explicit example,
if $k$ is a field of characteristic not equal to $3$, then the local ring 
at the origin of the Fermat cubic
\[
R = k[x,y,z]_{(x,y,z)}/(x^3+y^3+z^3)
\]
is not $F$-rational \cite[Ex.\ 6.2.5]{Smi97(b)}, hence
also not a splinter since excellent splinters are $F$-rational \cite{Smi94}.
By Fedder's criterion, $R$ is $F$-pure, for instance, when the characteristic of $k$ is
$7$ and $R$ is normal (it is $R_1$ + $S_2$) when the characteristic of $k \neq 3$. Coming
back to our example, once we have such an $R$, Hochster then constructs \cite[Prop.\ 1]{Hoc73(b)} a
noetherian domain $S$ using $R$ such that
\begin{enumerate}
    \item[(a)] $S$ has infinitely many maximal ideals;
    \item[(b)] for any maximal ideal $\mathfrak M$ of $S$,
    \[
    S_{\mathfrak M} \cong (L_{\mathfrak M} \otimes_k R)_{\mathfrak{m}(L_{\mathfrak M} \otimes_k R)}
     \]
     for a suitable field extension $L_{\mathfrak M}/k$ that depends on $\mathfrak{M}$;
    \item[(c)] every nonzero element of $S$
    is contained in only finitely many maximal ideals.
\end{enumerate}
  
In particular, this implies that $S$ is a 
locally excellent domain; in fact the local rings of $S$ are essentially of finite type over appropriate field extensions of $k$. Furthermore,
(a) and (c) imply that the intersection of all the maximal ideals of $S$ is $(0)$.

We now claim that since $k$ is algebraically closed, for all field extensions $K/k$, 
$K \otimes_k R$ is also $F$-pure and normal. Indeed, since $k$ does not have any non-trivial finite purely inseparable extensions, 
\[
k \hookrightarrow K
\]
is trivially a regular map (i.e. a flat map with geometrically regular fibers). As $R$ is
essentially of finite type over $k$, the base change map
\[
R \to K \otimes_k R
\]
is also a regular map \cite[(33.D), Lem.\ 4]{Mat80}. We now observe that both $F$-purity
and normality ascend from the base to the target over regular maps, proving that $K \otimes_k R$ is both $F$-pure
and normal. The ascent of $F$-purity over regular maps follows by the
Radu-Andr\'e theorem because the relative Frobenius of $F_{K \otimes_k R/R}$ is 
faithfully flat, hence pure, and so the Frobenius on $K \otimes_k R$ can be expressed as 
the composition of the pure maps
\[
K \otimes_k R \xrightarrow{\id_K \otimes_k F_R} K \otimes_k F_*R \xrightarrow{F_{K \otimes_k R/R}} F_*(K\otimes_k R),
\]
where the first map in the composition is pure because it is the base change of the 
pure map $F_R \colon R \to F_*R$. The ascent of normality over regular maps follows
because the $R_n$ and $S_n$ properties ascend over regular maps; see for instance
\cite[\href{https://stacks.math.columbia.edu/tag/0BFK}{Tag 0BFK}]{stacks-project}. The
upshot of this discussion is that 
for the locally excellent noetherian domain $S$ and for
any maximal ideal $\mathfrak M$ of $S$, $S_{\mathfrak M} \cong (L_{\mathfrak M} \otimes_k R)_{\mathfrak{m}(L_{\mathfrak M} \otimes_k R)}$ is both $F$-pure and normal. Since $F$-purity and normality can be checked locally at the maximal ideals, it follows that $S$ is a locally excellent $F$-pure and normal domain. However,
$R$ was chosen so that it is not a splinter, and the splinter property
satisfies faithfully-flat descent. Therefore for all maximal
ideals $\mathfrak{M}$ of $S$, $S_{\mathfrak M}$ is not a splinter
because the map $R \to (L_{\mathfrak M} \otimes_k R)_{\mathfrak{m}(L_{\mathfrak M} \otimes_k R)} \cong S_{\mathfrak M}$ is faithfully flat. Thus the non-splinter locus of $S$ contains
all the maximal ideals, whose intersection is $(0)$. This means that the non-splinter locus of $S$ cannot be closed as otherwise the splinter locus
would be empty, which it is not -- $S$, being a domain, is a splinter at its generic point.
\end{example}


For noetherian graded rings over fields of 
prime characteristic, the
splinter property is detected by the homogeneous maximal ideal. This 
is well-known over fields of characteristic $0$ because then splinter
is the same as being normal.

\begin{corollary}
\label{cor:splinter-graded}
Let $R = \bigoplus_{n = 0}^{\infty} R_n$ be a noetherian graded ring such
that $R_0 = k$ is a field of characteristic $p > 0$.
Let $\mathfrak{m} \coloneqq \bigoplus_{n > 0} R_n$ be the homogeneous maximal
ideal of $R$. Then $R$ is a splinter if and only if $R_{\mathfrak m}$ is a
splinter.
\end{corollary}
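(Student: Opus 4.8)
The forward direction is immediate, since the splinter property localizes: if $R$ is a splinter then so is $R_{\mathfrak m}$. For the converse I would assume $R_{\mathfrak m}$ is a splinter and show that the non-splinter locus $Z \coloneqq \{\mathfrak p \in \Spec(R) : R_{\mathfrak p} \text{ is not a splinter}\}$ is empty; since splinter is a local condition this forces $R$ to be a splinter. The first thing to record is that $R$ is a finitely generated $k$-algebra: the homogeneous ideal $\mathfrak m$ is finitely generated (as $R$ is noetherian), hence generated by finitely many homogeneous elements of positive degree, and a degree induction then expresses every element of $R$ in terms of these generators. Consequently $\Spec(R)$ is of finite type over the field $k$, which is trivially a noetherian local ring of characteristic $p$ with geometrically regular formal fibers, so Theorem \ref{thm:F-split-locus-open}(ii) applies and $Z$ is closed in $\Spec(R)$. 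Finally $\mathfrak m$ is a (maximal) prime of $R$ since $R/\mathfrak m = k$, and $\mathfrak m \notin Z$ by hypothesis.

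The heart of the argument is the case where $k$ is infinite. For $\lambda \in k^{\times}$ the rule $r_n \mapsto \lambda^{n} r_n$ for $r_n \in R_n$ defines a graded $k$-algebra automorphism $\sigma_\lambda$ of $R$, and since an automorphism sends local rings to isomorphic local rings, the induced homeomorphism of $\Spec(R)$ preserves $Z$. Writing $Z = \mathbf V(\mathfrak a)$ with $\mathfrak a$ the radical ideal of elements vanishing on $Z$, this gives $\sigma_\lambda(\mathfrak a) = \mathfrak a$ for every $\lambda \in k^{\times}$; a Vandermonde argument (valid because $k$ is infinite) then shows that each homogeneous component of each element of $\mathfrak a$ again lies in $\mathfrak a$, i.e.\ $\mathfrak a$ is homogeneous. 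If $Z$ were nonempty, $\mathfrak a$ would be a proper homogeneous ideal, so $\mathfrak a \cap R_0 = \mathfrak a \cap k = (0)$ and hence $\mathfrak a \subseteq \bigoplus_{n > 0} R_n = \mathfrak m$; but then $\mathfrak m \in \mathbf V(\mathfrak a) = Z$, contradicting $\mathfrak m \notin Z$. Therefore $Z = \emptyset$ and $R$ is a splinter.

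For an arbitrary field $k$ of characteristic $p$ I would reduce to the infinite case by base change. Set $\bar R \coloneqq R \otimes_k \bar k$ for an algebraic closure $\bar k$ of $k$: this is a noetherian $\mathbf N$-graded $\bar k$-algebra with $\bar R_0 = \bar k$ and homogeneous maximal ideal $\bar{\mathfrak m} = \mathfrak m \bar R$, and it is again finitely generated over $\bar k$. Let $\pi \colon \Spec(\bar R) \to \Spec(R)$ be the projection. Since $\bar k / k$ is separable algebraic, $\pi$ is faithfully flat and, locally, a filtered colimit of finite étale maps; combining the ascent of the splinter property along (filtered colimits of) étale maps \cite[Thm.\ A]{DT19} with its faithfully flat descent, one checks that $\pi^{-1}(Z)$ is exactly the non-splinter locus of $\bar R$ and that $\bar{\mathfrak m}$ lies outside it (because $R_{\mathfrak m} \to \bar R_{\bar{\mathfrak m}}$ is a filtered colimit of finite étale extensions of the splinter $R_{\mathfrak m}$, followed by a localization). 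The infinite-field case applied to $\bar R$ then gives that $\bar R$ is a splinter, so $\pi^{-1}(Z) = \emptyset$, and since $\pi$ is surjective $Z = \emptyset$; hence $R$ is a splinter.

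I expect the main obstacle to be this last step: one must verify carefully that both the non-splinter locus and the distinguished point $\mathfrak m$ are transported correctly along $\pi$, which is precisely where the étale-ascent theorem of \cite{DT19} and faithfully flat descent of splinters are needed — and over a finite field this cannot be replaced by the naive $k^{\times}$-invariance used in the infinite case. (One could instead argue directly, over any field, that $Z$ is invariant under the scheme-theoretic $\mathbf G_m$-action defined by the grading and is therefore cut out by a homogeneous ideal, bypassing the base change; but the route above keeps us within the tools emphasized in the paper.)
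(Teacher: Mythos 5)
Your proposal is correct and follows essentially the same route as the paper: openness of the splinter locus via Theorem \ref{thm:F-split-locus-open}, the $k^{\times}$-action forcing the radical defining ideal of the non-splinter locus to be homogeneous when $k$ is infinite, and for the remaining case a base change to $\overline{k}$ through finite \'etale subextensions using \'etale ascent \cite[Thm.\ A]{DT19}, filtered colimits, and faithfully flat descent. The only caveat is your phrase ``for an arbitrary field $k$'' with the claim that $\overline{k}/k$ is separable algebraic: this holds only for perfect $k$, but since any imperfect field is infinite and thus already covered by your first argument, the base-change step is genuinely needed only for finite (hence perfect) $k$, exactly as in the paper.
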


\begin{proof}
The splinter property localizes. So the backward implication
is the non-trivial one.

We first assume that $k$ is infinite.
By Theorem \ref{thm:F-split-locus-open}, the splinter locus of $R$ is open.
Let $I$ be the radical ideal of $R$ that defines the non-splinter locus. 
Note that $k^\times$ acts on $R$ by automorphisms 
(for $c \in k^\times$ and $x \in R_n$, $c \cdot x = c^nx$), and automorphisms clearly preserve the non-splinter locus of $R$. 
This means that $I$ is stable under the
action of $k^\times$, and since $k$ is infinite, $I$ must be a homogeneous
ideal of $R$. If $R_{\mathfrak m}$ is a splinter, then $I \nsubseteq \mathfrak m$,
which means that $I = R$, that is, $R$ is a splinter.

Suppose $k$ is finite and that $R_\mathfrak{m}$ is a splinter. 
Finite fields of prime characteristic are
perfect. This means that an algebraic closure $\overline{k}$ is the
filtered union of finite \'etale subextensions $\ell/k$. 
Let
\[
R_\ell \coloneqq \ell \otimes_ k R.
\]
Then $R \hookrightarrow R_\ell$ is a finite \'etale map and 
$\mathfrak{m}_\ell \coloneqq \mathfrak{m}R_\ell$ is the homogeneous maximal ideal
of $R_\ell$. It follows that
\[
R_{\mathfrak m} \hookrightarrow (R_\ell)_{\mathfrak{m}_\ell}
\]
is essentially \'etale, and so, $(R_\ell)_{\mathfrak{m}_\ell}$ is a splinter
by \cite[Thm.\ A]{DT19}. Now
\[
R_{\overline{k}} \coloneqq \overline{k} \otimes_k R.
\]
is graded noetherian over $\overline{k}$ with homogeneous maximal ideal $\mathfrak{m}_{\overline{k}} \coloneqq \mathfrak{m}R_{\overline{k}}$.
Then
\[
(R_{\overline{k}})_{\mathfrak{m}_{\overline{k}}} = \colim_\ell (R_\ell)_{\mathfrak{m}_\ell}
\]
is a splinter because a filtered colimit of splinters is a splinter
\cite[Prop.\ 5.2.5(ii)]{AD20}. Since $\overline{k}$ is infinite, it
follows by the previous paragraph that $R_{\overline{k}}$ is a splinter. 
By faithfully flat descent along
\[
R \hookrightarrow R_{\overline{k}},
\]
we then get that $R$ is a splinter.
\end{proof}

\section{Acknowledgments}
We thank Linquan Ma, Takumi Murayama, Rebecca R.G. and Karl Schwede for helpful conversations and their
interest. We are very grateful to Takumi for allowing us
to include the results in Remarks \ref{rem:weaking-formal-fibers} and 
\ref{rem:open-F-pure-loci} on permanence properties of
Gorenstein and geometrically $F$-pure fibers and their relation
to openness of $F$-pure
loci since he was aware of these results from his
work in \cite{Mur19}. Additionally, we thank Linquan and Benjamin Antieau for 
comments on a draft.

\bibliographystyle{amsalpha}
\footnotesize
\end{document}